\newtheorem{thm}{Theorem}[section]
\newtheorem{lem}[thm]{Lemma}
\newtheorem{cor}[thm]{Corollary}
\newtheorem{prop}[thm]{Proposition}
\newtheorem{rems}[thm]{Remarks}
\newtheorem{rem}[thm]{Remark}
\numberwithin{equation}{section}
\newcommand{\R}{\mathbb{R}}
\newcommand{\mS}{\mathbb{S}}
\newcommand{\U}{\mathbb{U}}
\newcommand{\A}{\mathbb{A}}
\newcommand{\B}{\mathbb{B}}
\newcommand{\E}{\mathbb{E}}
\newcommand{\Y}{\mathbb{Y}}
\newcommand{\V}{\mathbb{V}}
\newcommand{\ml}{\mathcal{L}}
\newcommand{\ve}{\varepsilon}
\newcommand{\rd}{\mathrm{d}}
\newcommand{\dom}{\mathrm{dom}}
\newcommand{\dhr}{\mathrel{\lhook\joinrel\relbar\kern-.8ex\joinrel\lhook\joinrel\rightarrow}}
\begin{document}


\title[Age-Structured Diffusive Populations]{An Evolution System for a Class of Age-Structured Diffusive Population Equations}

\author{Christoph Walker}
\email{walker@ifam.uni-hannover.de}
\address{Leibniz Universit\"at Hannover\\ Institut f\" ur Angewandte Mathematik \\ Welfengarten 1 \\ D--30167 Hannover\\ Germany}
\date{\today}

\begin{abstract}
Kato's theory on the construction of strongly continuous evolution systems associated with hyperbolic equations is applied to the linear equation describing an age-structured population that is subject to time-dependent diffusion. The evolution system is used to provide conditions for the well-posedness of the corresponding quasilinear equation.
\end{abstract}

\keywords{Age structure, semigroups of linear operators, evolution systems.}
\subjclass[2010]{47D06, 35K90, 35M10, 92D25}

\maketitle

\section{Introduction and Main Results}

\noindent Problems of the form
\begin{subequations}\label{QP} 
\begin{align}
\partial_t u+ \partial_au \, &=     A(u,t,a)u  \,, \qquad t\in (0,T]\, ,\quad a\in (0,a_m)\, ,\label{Q1}\\ 
u(t,0)&=\int_0^{a_m}b(a)\, u(t,a)\, \rd a\,, \qquad t\in (0,T]\, ,\label{Q2} \\
u(0,a)&=  \phi(a)\,, \qquad a\in (0,a_m)\,,\label{Q3a}
\end{align}
\end{subequations}
arise as an abstract formulation of  the evolution of 
an age- and spatially structured population with density  
$$
u=u(t,a):[0,T]\times [0,a_m)\rightarrow E_0\,.
$$ 
The age of individuals is denoted by $a\in [0,a_m)$ with
 maximal age $a_m\in (0,\infty]$. The spatial movement of individuals is described by the (usually: differential) operators
$$
A(u,t,a): E_1\subset E_0\rightarrow E_0
$$ 
which, in turn, may be influenced by the population itself.
We assume that these operators are  generators of (analytic) semigroups on the Banach space $E_0$ with domains~$E_1$.
Age-dependent death processes are incorporated into the operators $A(u,t,a)$. Equation~\eqref{Q2} is the birth law with birth rate $b$, and $\phi$ in~\eqref{Q3a} is the initial population. We refer e.g. to \cite{GurtinMcCamy74,GurtinMcCamy81,WalkerDCDSA10,WalkerJEPE,WebbSpringer} and the references therein for more details and concrete examples of age-structured diffusive population models.

Note that~\eqref{QP} has a quasilinear structure. Establishing well-posedness results for~\eqref{QP} naturally requires a prior investigation of the corresponding linear problem
\begin{subequations}\label{P} 
\begin{align}
\partial_t u+ \partial_au \, &=     A(t,a)u  \,, \qquad t\in (0,T]\, ,\quad a\in (0,a_m)\, ,\label{1}\\ 
u(t,0)&=\int_0^{a_m}b(a)\, u(t,a)\, \rd a\,, \qquad t\in (0,T]\, ,\label{2} \\
u(0,a)&=  \phi(a)\,, \qquad a\in (0,a_m)\,,
\end{align}
\end{subequations}
with an operator $A=A(t,a)$ depending on time $t$ and possibly age $a$, but being independent of~$u$. Allowing for a time dependence is essential for the study of quasilinear problems, the solvability of the latter is then a consequence. It is worth pointing out that a simultaneous dependence of the operator $A$ on both variables~$t$ and~$a$ induces additional conceptual and technical difficulties making the problem more intricate as we shall see. This may be one of the reasons why there does not seem to be much literature on problem~\eqref{P} (and~\eqref{QP}) involving a dependence on both variables~$t$ and~$a$ in the operator $A=A(t,a)$. Even for the time-independent case $A=A(a)$ there is no broad literature, however, see ~\cite{Rhandi,RhandiSchnaubelt_DCDS99,Thieme_LN_89,Thieme_AMASA_96,ThiemeDCDS}. 
A time-dependence was actually included in \cite{RhandiSchnaubelt_DCDS99} though rather in the birth process $b$ than in the diffusion part $A$.
For the case of an operator $A$ independent of time and age the corresponding literature list is more extensive, see for instance the references in \cite{WalkerDCDSA10,WebbSpringer}. 

In this work we shall rely on the results of~\cite{WalkerIUMJ} where it was shown that the solution to~\eqref{P} involving time-independent operators $A=A(a)$ is given by a semigroup on the (biologically natural) phase space $\E_0:=L_1(J,E_0)$ (see~\eqref{100} below) with a generator that is fully determinable.
Based on this result we shall prove herein  that an evolution system  can be associated with the time-dependent problem~\eqref{P}. This is done by rather direct computations  verifying the conditions of Kato's seminal theory \cite{Kato70,Kato73} (see also \cite{Pazy}) on  evolution equations of hyperbolic type. Moreover, we derive additional regularity properties of the evolution system and provide conditions for the existence of mild solutions to the quasilinear problem~\eqref{QP}.





\subsection*{Assumptions and Notations}

We shall first consider the linear problem~\eqref{P} and list our assumptions for this case. Roughly speaking, we will assume that $A=A(t,a)$ induces for fixed time $t$ a (parabolic) evolution operator with respect to age $a$ satisfying certain properties uniformly with respect to time $t$.
 
Set $J:=[0,a_m]$ if $a_m<\infty$ and $J:=[0,\infty)$ if $a_m=\infty$. We let $E_0$ be a real Banach space ordered by a closed convex cone $E_0^+$ and 
assume  that  $E_1$ is a dense and continuously embedded subspace of $E_0$.  We fix $T>0$ and assume that there is $\rho>0$ such that
\begin{subequations}\label{A1}
\begin{equation}\label{A1a}
A\in  C\big([0,T],L_\infty\big(J,\ml(E_1,E_0)\big)\big)\,,\qquad A(t)\in  C^\rho\big(J,\mathcal{H}(E_1,E_0)\big)\,,\quad t\in [0,T]\,,
\end{equation}
where $\mathcal{H}(E_1,E_0)$ is the open subset of $\ml(E_1,E_0)$ consisting of generators of analytic semigroups on $E_0$ with domains $E_1$. It then follows from \cite[Corollary~II.4.4.2]{LQPP} that, for every $t\in [0,T]$ fixed, $A(t)\in  C^\rho\big(J,\mathcal{H}(E_1,E_0)\big)$  generates a parabolic evolution operator
$$
U_{A(t)}:\{(a,\sigma)\,;\, a\in J\,,\ 0\le \sigma\le a\}\to \ml(E_0)
$$
on $E_0$ with regularity subspace $E_1$ in the sense of \cite[Section II.2]{LQPP}.
We further assume that there are $M\ge 1$ and $\varpi\in\R$ such that, for fixed $t \in [0,T]$,
	\begin{equation}\label{EO}
	\|U_{A(t)}(a,\sigma)\|_{\ml(E_\ell)}+(a-\sigma)^\ell\,\|U_{A(t)}(a,\sigma)\|_{\ml(E_0,E_\ell)}\le M e^{\varpi (a-\sigma)}\,,\qquad a\in J\,,\quad 0\le \sigma\le a \,,
	\end{equation}
for $\ell=0,1$ and some $M\ge 1$ (if $a_m<\infty$ so that $J$ is compact, this is automatically satisfied, see~\cite[Lemma ~II.5.1.3]{LQPP}), and that
	\begin{equation}\label{A4}
	\text{if $a_m=\infty$, then $\varpi<0$ in \eqref{EO}}\,.
	\end{equation}
\end{subequations}
In order to associate with \eqref{P} an evolution system we shall further impose that the family $(U_{A(t)})_{t\in [0,T]}$ is {\it stable} in $E_\ell$ for $\ell\in\{0,1\}$ in the sense that  there are $M_\ell\ge 1$ and $\omega_\ell\in \R$ such that
\begin{equation}\label{A11}
\bigg\|\prod_{j=1}^{n}U_{A(t_j)}(a_j,\sigma_j)\bigg\|_{\ml(E_\ell)}\le M_\ell \exp\bigg\{\omega_\ell\sum_{j=1}^{n}(a_j-\sigma_j)\bigg\}\,,\quad 0\le\sigma_j\le a_j<a_m\,,
\end{equation}
for any finite sequence $0\le t_1\le t_2\le\ldots\le t_n\le T$ and $n=1,2,\ldots $.

The birth rate~$b$ is supposed to satisfy
\begin{equation}\label{A2}
b\in L_{\infty}\big(J,\ml(E_\ell)\big)\cap L_{1}\big(J,\ml(E_\ell)\big)\,, \quad \ell\in \{0,\vartheta,1\}  
\,,
\end{equation}
where $E_\vartheta:=(E_0,E_1)_\vartheta$ with parameter  $\vartheta\in (0,1)$ is a fixed interpolation space corresponding to an admissible interpolation functor $(\cdot,\cdot)_\vartheta$. Moreover, we assume that
\begin{equation}\label{A3}
b(a)U_{A(t)}(a,0)\in \ml(E_0) \ \text{is strongly positive for $a$ in a subset of $J$ of positive measure}\, .
\end{equation}
We set
$$
\|b\|_\ell:=\|b\|_{L_{\infty}(J,\ml(E_\ell))}\,, \quad \ell\in \{0,\vartheta,1\}  \,,
$$
and write
$$
A(t,a):=A(t)(a)\in\mathcal{H}(E_1,E_0)\,,\quad a\in J\,,\quad t\in [0,T]\,,
$$
in the following.\\

Before continuing with the main results some remarks are in order concerning the imposed assumptions.

\begin{rems}\label{R1}
{\bf (a)} That $A(t)=A(t,\cdot)$ induces for fixed $t\in [0,T]$ a parabolic evolution operator $U_{A(t)}$ on~$E_0$ with regularity subspace $E_1$ in the sense of \cite[Section II.2]{LQPP} means in particular that 
$$
v(a):=U_{A(t)}(a,\sigma)v^0\,,\quad a\in [\sigma,a_m)\,,
$$ 
is, for  given $\sigma\in [0,a_m)$ and $v^0\in E_0$, the unique solution
$$
v\in C([\sigma,a_m),E_0) \cap C^1((\sigma,a_m),E_0)\cap C((\sigma,a_m),E_1)
$$
to the Cauchy problem
$$
\partial_a v(a)=A(t,a) v(a)\,,\quad a\in (\sigma,a_m)\,,\qquad v(\sigma)=v^0\,.
$$
Moreover, given $w^0\in E_0$  and $f\in \E_0=L_1(J,E_0)$, the mild solution $w\in C(J,E_0)$ to
$$
\partial_a w(a)=A(t,a) w(a)+f(a)\,,\quad a\in (0,a_m)\,,\qquad w(0)=w^0\,,
$$
is given by
$$
w(a)= U_{A(t)}(a,0)w^0+\int_0^a U_{A(t)}(a,\sigma)\,f(\sigma)\,\rd \sigma\,,\quad a\in J\,.
$$

{\bf (b)}  The stability assumption~\eqref{A11} corresponds to uniform resolvent estimates for the operator~$A$ (see~\cite[Theorem~5.2.2]{Pazy}, \cite[Section~II.4.2]{LQPP}) and reduces to the stability condition introduced in  \cite[Definition 5.2.1, Theorem 5.2.2]{Pazy} when $A(t)$ is independent of age $a\in J$ so that 
$$
U_{A(t)}(a,\sigma)=e^{(a-\sigma)A(t)}\,,\quad 0\le \sigma\le a\,,
$$
is the analytic semigroup generated by $A(t)$.\\

{\bf (c)}  For instance, assumption~\eqref{A11} is satisfied  if $M=1$ in \eqref{EO}. In particular, it is satisfied if $A(t)$ is independent of age and generates a contraction semigroup for every $t\in [0,T]$.\\

{\bf (d)} If $A\in  C\big([0,T],BC^1\big(J,\mathcal{H}(E_1,E_0)\big)\big)$ is such that 
$$
\|e^{sA(t,a)}\|_{\ml(E_0)}\le e^{\omega s}\,,\quad s\ge 0\,, \quad (t,a)\in [0,T]\times J\,,
$$
for some $\omega\in\R$, then~\cite[Theorem~5.4.8]{Pazy} (and the uniqueness of evolution operators) implies
$$
\|U_{A(t)}(a,\sigma)\|_{\ml(E_0)}\le e^{\omega (a-\sigma)}\,,\quad 0\le \sigma\le a< a_m\,, \quad t\in [0,T]\,,
$$
and hence assumption~\eqref{A11} is satisfied for $\ell=0$.\\

{\bf (e)} Suppose that $(Q(t))_{t\in [0,T]}$ is a familiy of isomorphisms from $E_1$ to $E_0$ such that 
$$
Q(t)A(t,\cdot)Q(t)^{-1}=A(t,\cdot)\,,\qquad \|Q(t)\|_{\ml(E_1,E_0)}+\|Q(t)^{-1}\|_{\ml(E_0,E_1)}\le C
$$ 
for every $t\in [0,T]$ and such that $Q:[0,T]\to\ml(E_1,E_0)$ is of bounded variation. If~\eqref{A11} is satisfied for $\ell=0$, then it is satisfied also for $\ell=1$. This follows exactly as in \cite[Theorem~5.2.4]{Pazy}. For instance, if $Q\in C^1([0,T],\mathrm{Isom}(E_1,E_0))$, if $Q(t)$ commutes with $A(t,\cdot)$, and if~\eqref{A11} is satisfied for $\ell=0$, then~\eqref{A11} is satisfied also for $\ell=1$. In particular, analogously to \cite[Theorem~5.4.8]{Pazy}, if $A\in C^1([0,T],\mathcal{H}(E_1,E_0))$ is age-independent and~\eqref{A11} is satisfied for $\ell=0$, one may take $Q(t)=\lambda_0-A(t)$ for some real $\lambda_0$ sufficiently large (in the resolvent sets of $A(t)$) and conclude that~\eqref{A11} is satisfied also for~$\ell=1$. \\

{\bf (f)}  Assumption~\eqref{A3} is required to cite easily the results from~\cite{WalkerIUMJ} on the semigroup associated with time-independent operators $A=A(a)$, see Proposition~\ref{T1} below.
\end{rems}

\subsection*{Main Results}

As mentioned previously, we first focus on the linear Cauchy problem~\eqref{P}. We begin by 
fixing $t\in [0,T]$ and considering for $v=v(s,a)$ the linear problem 
\begin{subequations}\label{Pt} 
\begin{align}
\partial_s v+ \partial_av \, &=     A(t,a)v \,, \qquad s>0\, ,\quad a\in (0,a_m)\, ,\label{1t}\\ 
v(s,0)&=\int_0^{a_m}b(a)\, v(s,a)\, \rd a\,, \qquad s>0\, ,\label{2t} \\
v(0,a)&=  \phi(a)\,, \qquad a\in (0,a_m)\,.
\end{align}
\end{subequations}
In \cite{WalkerIUMJ} it was shown that the solution to~\eqref{Pt} is determined by a strongly continuous semigroup $(\mS_t(s))_{s\ge 0}$ in $\E_0$, given by
\begin{subequations}\label{100}
  \begin{equation}\label{100a}
     \big[\mS_t(s) \phi\big](a)\, :=\, \left\{ \begin{aligned}
    &U_{A(t)}(a,a-s)\, \phi(a-s)\, ,& &   a\in J\,,\ 0\le s< a\, ,\\
    & U_{A(t)}(a,0)\, B_\phi^t(s-a)\, ,& &  a\in J\, ,\ s\ge a\, ,
    \end{aligned}
   \right.
    \end{equation}
for $\phi\in\E_0$, where $B_\phi^t$ is the unique solution to the Volterra equation 
  \begin{equation}\label{500}
\begin{split}
    B_\phi^t(s)\, &=\, \int_0^s \chi(a)\, b(a)\, U_{A(t)}(a,0)\, B_\phi^t(s-a)\, \rd
    a\,\\
& \qquad +\, \int_s^{a_m} \chi(a)\, b(a)\, U_{A(t)}(a,a-s)\, \phi(a-s)\, \rd a\, ,\quad
    s\ge 0\, ,
\end{split}
   \end{equation}
\end{subequations}		
with $\chi$ denoting the characteristic function of the interval $(0,a_m)$. That is, $B_\phi^t$ is such that
  \begin{equation}\label{6a}
   B_\phi^t(s)= \int_0^{a_m} b(a) \big[\mS_t(s)\phi\big](a)\, \rd a\, ,\quad s\ge 0\,.
    \end{equation}

We recall the main properties of the semigroup $(\mS_t(s))_{s\ge 0}$ in the next proposition (for more details and further properties regarding regularity, compactness, and asynchronous exponential growth we refer to~\cite{WalkerIUMJ}).

\begin{prop}{\bf \cite[Theorem~1.2, Theorem~1.4]{WalkerIUMJ}}\label{T1}
Suppose \eqref{A1}, \eqref{A2}, and \eqref{A3}. Let \mbox{$t\in [0,T]$} be fixed. There is
\begin{equation}\label{6ax}
[\phi\mapsto B_\phi^t]\in\ml \big(\E_0,C(\R^+,E_0)\big)
\end{equation} \vspace{1mm} 
such that $B_\phi^t=B_\phi^t(s)$ is the unique solution to~\eqref{500} for $\phi\in\E_0$. Moreover:

{\bf (a)} 
$(\mS_t(s))_{s\ge 0}$ defined in \eqref{100a} is a strongly continuous positive semigroup on $\E_0$ with
\begin{equation*} 
\|\mS_t(t)\|_{\ml(\E_0)}\le M_0\, e^{(\varpi +\|b\|_0 M_0)t}\,,\quad t\ge 0\,.
\end{equation*}

{\bf (b)} Let $\A(t)$ be the infinitesimal generator of the semigroup~$(\mS_t(s))_{s\ge 0}$. Then
$\psi\in \dom(\A(t))$ if and only if there exists $\phi\in \E_0$ such that $\psi\in C(J,E_0)\cap \E_0$ is the mild solution to
\begin{equation*}\label{psi}
\partial_a\psi = A(t,a)\psi +\phi(a)\,,\quad a\in J\,,\qquad \psi(0)=\int_0^{a_m} b(a) \psi(a)\,\rd a\,.
\end{equation*}
In this case, $\A(t) \psi = -\phi$. 
\end{prop}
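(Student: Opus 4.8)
The plan is to solve \eqref{Pt} by the method of characteristics and to reduce everything to the operator-valued renewal problem \eqref{500}. Integrating $\partial_s v+\partial_a v=A(t,a)v$ along the characteristics $a-s=\mathrm{const}$ turns it into the age-ODE solved by the parabolic evolution operator $U_{A(t)}$ of Remark~\ref{R1}(a); whether the characteristic through $(s,a)$ started from the initial line $\{s=0\}$ or crossed the boundary $\{a=0\}$ produces the two cases in \eqref{100a}, the boundary trace being $B_\phi^t(s)=v(s,0)$. Substituting \eqref{100a} into the birth law \eqref{2t} yields exactly \eqref{500}. First I would solve \eqref{500}: on a compact interval $[0,S]$ the second integral defines, by \eqref{EO} with $\ell=0$ and the integrability of $b$ in \eqref{A2}, a bounded linear map $\E_0\to C([0,S],E_0)$, while the convolution term has a kernel $\chi b\,U_{A(t)}(\cdot,0)$ of controlled norm; a Neumann series (equivalently a contraction in a weighted sup-norm) then gives existence, uniqueness, and the bound \eqref{6ax}. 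When $a_m=\infty$ the decay \eqref{A4}, i.e.\ $\varpi<0$, is what makes the tails of both integrals converge and delivers continuity and boundedness on all of $\R^+$.

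For part~(a), the semigroup law $\mS_t(s_1+s_2)=\mS_t(s_1)\,\mS_t(s_2)$ follows from the evolution identity $U_{A(t)}(a,r)\,U_{A(t)}(r,\sigma)=U_{A(t)}(a,\sigma)$ together with the uniqueness for \eqref{500}, via a case distinction on the relative sizes of $s_1,s_2$ and $s_1+s_2$ against the age $a$. Strong continuity on $\E_0=L_1(J,E_0)$ I would prove by first checking continuity of $s\mapsto\mS_t(s)\phi$ for $\phi$ in a dense subset (e.g.\ continuous, compactly supported data), using strong continuity of $U_{A(t)}$ and continuity of $B_\phi^t$, and then extending to all of $\E_0$ by the uniform operator bound; the $L_1$ setting absorbs the jump of the representation across $s=a$. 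The growth estimate is obtained by inserting \eqref{EO} ($\ell=0$) and the bound on $\phi\mapsto B_\phi^t$ into \eqref{100a}, which produces a Gronwall inequality whose solution is the stated exponential growth bound. Positivity is immediate once $B_\phi^t\ge0$ for $\phi\ge0$, which holds because every term of the Neumann series for \eqref{500} is built from the positive kernel $\chi b\,U_{A(t)}(\cdot,0)$ and positive data, and because $U_{A(t)}$ leaves $E_0^+$ invariant.

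For part~(b) I would identify the generator through its resolvent. For $\mathrm{Re}\,\lambda$ large, $R(\lambda,\A(t))\phi=\int_0^\infty e^{-\lambda s}\,\mS_t(s)\phi\,\rd s$ converges by the growth bound; substituting \eqref{100a} and interchanging the order of integration (justified by \eqref{EO}) expresses $\psi:=R(\lambda,\A(t))\phi$ through $U_{A(t)}$ and the Volterra operator, and one recognises $\psi$ as the mild solution of the boundary-value problem $\partial_a\psi=A(t,a)\psi-\lambda\psi+\phi$, $\psi(0)=\int_0^{a_m}b\,\psi\,\rd a$, where Remark~\ref{R1}(a) provides the mild-solution formula with the unbounded $A(t,a)$. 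Since $(\lambda-\A(t))\psi=\phi$ is equivalent to $\A(t)\psi=-(\phi-\lambda\psi)$, setting $\tilde\phi:=\phi-\lambda\psi$ (the inhomogeneity of the statement) gives $\A(t)\psi=-\tilde\phi$ and $\partial_a\psi=A(t,a)\psi+\tilde\phi$ with the birth condition, which is exactly the asserted characterisation; conversely, any mild solution of that problem equals $R(\lambda,\A(t))(\tilde\phi+\lambda\psi)$ by uniqueness of the boundary-value problem, hence lies in $\dom(\A(t))$ with $\A(t)\psi=-\tilde\phi$.

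The main obstacle, I expect, is part~(b): rigorously justifying the Laplace-transform computation of the resolvent when $A(t,a)$ is unbounded --- interchanging the integrals, making sense of $\partial_a\psi=A(t,a)\psi+\tilde\phi$ in the mild sense within the regularity-subspace framework, and matching the birth boundary condition --- and, in tandem, proving that this boundary-value problem has a unique mild solution for each right-hand side so that the resolvent really is its solution operator. The noncompact case $a_m=\infty$ adds the burden of controlling all integrals at infinity, for which the sign condition \eqref{A4} is essential.
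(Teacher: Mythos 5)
This proposition is not proved in the paper at all --- it is imported verbatim from \cite[Theorem~1.2, Theorem~1.4]{WalkerIUMJ} --- so the only fair comparison is with the construction the paper quotes, namely the characteristics representation \eqref{100a}, the renewal equation \eqref{500}, and the mild-solution characterisation of the generator. Your proposal (Neumann series/Gronwall for \eqref{500}, semigroup law from the evolution-operator identity plus uniqueness, density argument for strong continuity, and the Laplace-transform/resolvent computation reducing $\dom(\A(t))$ to the boundary-value problem with birth condition) is correct and is essentially the same route by which the cited result is established, so there is nothing to add beyond noting that the positivity step tacitly uses that $U_{A(t)}$ and $b$ are positive operators, which is part of the standing framework of the cited reference rather than explicit in \eqref{A1}--\eqref{A3}.
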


As a consequence, we can reformulate problem~\eqref{P} as a linear Cauchy problem
\begin{equation}\label{CP}
u'(t)=\A(t) u(t)\,,\quad t\in [s,T]\,,\qquad u(s)=\phi\,,
\end{equation}
in the Banach space $\E_0$.
We aim at constructing an evolution system corresponding to this Cauchy problem. For this purpose we require more information on the (domain of the) generator $\A(t)$.

When replacing the space $\E_0=L_1(J,E_0)$ by $L_p(J,E_0)$ with $p\in (1,\infty)$ and assuming the operators $A(t,\cdot)$ to have maximal $L_p$-regularity, one can prove (see \cite{WalkerMOFM} for details) that the domain $\dom(\A(t))$  is given by
$$
\dom(\A(t))=\left\{\psi\in L_p(J,E_1)\cap W_p^1(J,E_0)\,;\,\psi(0)=\int_0^{a_m}b(a)\, \psi(a)\, \rd a\right\}\,.
$$
In particular, in this case the domain of the generator $\A(t)$ is independent of $t\in [0,T]$ which then allows one to apply directly the special case \cite[Theorem 5.4.8]{Pazy} of  Kato's theory on hyperbolic evolution equations in order to deduce the  existence of an evolution system in $L_p(J,E_0)$ associated with~\eqref{CP} (imposing time-differentiability on~$A(t,\cdot)$). 

However, in the biologically more relevant framework of $\E_0=L_1(J,E_0)$ and without the maximal regularity assumption on   $A(t,\cdot)$, the domains $\dom(\A(t))$ need not be independent of time and the construction of an evolution system becomes more involved. 
In this regard, we shall prove that
$$
\mathbb{Y}:=\left\{\psi\in \E_1\cap W_1^1(J,E_0)\,;\,\psi(0)=\int_0^{a_m}b(a)\, \psi(a)\, \rd a\right\}
$$
with $\E_1:=L_1(J,E_1)$ is for every $t\in [0,T]$ a core for $\A(t)$; that is, a  dense  subspace of $D(\A(t))$ (i.e. the domain $\dom(\A(t))$ endowed with its graph norm), see Lemma~\ref{L1a} below. In particular, $\Y$ is dense in~$\E_0$ and 
$$
\A(t)\psi=-\partial_a\psi+A(t,\cdot)\psi\,,\quad \psi\in\Y\,.
$$
We shall further prove  that $\mathbb{Y}$ is {\it $\A(t)$-admissible} for $t\in [0,T]$, i.e. that  $\mathbb{Y}$ is an invariant subspace of $\mS_t(s)$ (see Lemma~\ref{L1}) and the restriction $\big(\mS_t(s)\vert_{\mathbb{Y}}\big)_{s\ge 0}$ of $\big(\mS_t(s)\big)_{s\ge 0}$ to $\Y$ is a strongly continuous semigroup on $\mathbb{Y}$ (see Corollary~\ref{C1}). 
Moreover, relying on~\eqref{A11} we prove that the family $(\A(t))_{t\in [0,T]}$  is {\it stable} in $\E_0$ and in $\E_1$ (see Proposition~\ref{P2}). Unfortunately,
 the family $(\A_\Y(t))_{t\in [0,T]}$  of the parts of $\A(t)$ in $\mathbb{Y}$ does not seem to be stable in $\Y$ (unless the operators $A=A(t)$ are independent of $a\in J$) so that a significant condition required to apply Kato's theory is lacking (see condition $(H_2)$ in \cite[Theorem~5.3.1]{Pazy}). Yet we shall see that the stability in $\E_1$ (and $\E_0$) is sufficient to copy Kato's construction of evolution systems in form of the proof of \cite[Theorem~5.3.1]{Pazy} almost verbatim. This relies on the observation that the right-hand side of the identity 
$$
\big(\A(t_1)-\A(t_2)\big)\psi=\big(A(t_1,\cdot)-A(t_2,\cdot)\big)\psi\,,\quad \psi\in\Y\,,\quad t_1, t_2\in [0,T]\,,
$$
is well-defined even if only $\psi\in\E_1$.
Our main result then reads:

\begin{thm}\label{T2}
Suppose \eqref{A1}, \eqref{A11}, \eqref{A2}, and \eqref{A3}. Then, there is a unique evolution system $(\U_{\A}(t,s))_{0\le s\le t\le T}$ in $\E_0$ associated with~\eqref{CP} in the sense that $\U_{\A}(t,s)\in\ml(\E_0)$ satisfies
\begin{equation}\label{ES1}
\U_{\A}(s,s)=I\,,\qquad \U_{\A}(t,s)=\U_{\A}(t,r)\U_{\A}(r,s)\,,\quad 0\le s\le r\le t\le T\,,
\end{equation}
and
\begin{equation}\label{ES2}
(t,s)\mapsto\U_{\A}(t,s)\ \text{ is strongly continuous in $\E_0$ on }\ 0\le s\le t\le T\,.
\end{equation}
Moreover,
\begin{align}
\|\U_{\A}(t,s)\|_{\ml(\E_0)}\le M_0e^{(\omega_0+M_0\|b\|_0)(t-s)}\,,\quad 0\le s\le t\le T\,,\label{E1}\\
\frac{\partial^+}{\partial t}\U_{\A}(t,s)\phi\big\vert_{t=s}=\A(s)\phi\,,\quad \phi\in \mathbb{Y}\,,\quad
0\le s\le T\,,\label{E2}\\
\frac{\partial}{\partial s}\U_{\A}(t,s)\phi=-\U_{\A}(t,s)\A(s)\phi\,,\quad \phi\in \mathbb{Y}\,,\quad
0\le s\le t\le T\,.\label{E3}
\end{align}
In addition, given $\alpha\in [0,1)$  denote by $E_\alpha:=[E_0,E_1]_\alpha$ the complex interpolation space and set \mbox{$\E_\alpha:=L_1(J,E_\alpha)$}. Then $\U_{\A}(t,s)\in\ml(\E_\alpha)$ for $0\le s\le t\le T$ with
\begin{align}
\|\U_{\A}(t,s)\|_{\ml(\E_\alpha)}\le M_\alpha e^{\eta(t-s)}\,,\quad 0\le s\le t\le T\,,\label{E1x}
\end{align}
for some $M_\alpha\ge 1$ (depending on $M_0, M_1$, and $\alpha$) and $\eta:=\max\{\omega_0+M_0\|b\|_0\,,\,\omega_1+M_1\|b\|_1\}$. Moreover,
\begin{equation}\label{ES2alpha}
(t,s)\mapsto\U_{\A}(t,s)\ \text{ is strongly continuous in $\E_\alpha$ on }\ 0\le s\le t\le T\,.
\end{equation}
\end{thm}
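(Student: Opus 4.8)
The plan is to copy Kato's construction in the form of the proof of \cite[Theorem~5.3.1]{Pazy}, replacing the \emph{unavailable} stability of the parts $\A_\Y(t)$ in $\Y$ by the stability of $(\A(t))_{t\in[0,T]}$ in $\E_1$ provided by Proposition~\ref{P2}. I would fix a partition $\Delta:\ 0=\tau_0<\tau_1<\cdots<\tau_N=T$, set $\A_\Delta(\tau):=\A(\tau_k)$ for $\tau\in[\tau_k,\tau_{k+1})$, and define the piecewise evolution operator $\U_\Delta(t,s)$ by composing the semigroups $\mS_{\tau_k}(\cdot)$ from Proposition~\ref{T1} across the subintervals between $s$ and $t$. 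The stability in $\E_0$ then yields the uniform bound $\|\U_\Delta(t,s)\|_{\ml(\E_0)}\le M_0 e^{(\omega_0+M_0\|b\|_0)(t-s)}$ and the stability in $\E_1$ the bound $\|\U_\Delta(t,s)\|_{\ml(\E_1)}\le M_1 e^{(\omega_1+M_1\|b\|_1)(t-s)}$, while the invariance of $\Y$ under each $\mS_{\tau_k}(\cdot)$ (Lemma~\ref{L1}, Corollary~\ref{C1}) gives $\U_\Delta(t,s)\Y\subset\Y\subset\E_1$.

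The crux is the convergence of $\U_\Delta(t,s)\phi$ as the mesh of $\Delta$ tends to zero, for $\phi\in\Y$. Given two partitions $\Delta,\Delta'$, I would differentiate the product $\tau\mapsto\U_{\Delta'}(t,\tau)\U_\Delta(\tau,s)\phi$ --- legitimate since $\U_\Delta(\tau,s)\phi\in\Y$ lies in every $\dom(\A(\cdot))$ as $\Y$ is a core --- and integrate to obtain
$$
\U_\Delta(t,s)\phi-\U_{\Delta'}(t,s)\phi=\int_s^t\U_{\Delta'}(t,\tau)\big(\A_\Delta(\tau)-\A_{\Delta'}(\tau)\big)\U_\Delta(\tau,s)\phi\,\rd\tau\,.
$$
Here the identity $\big(\A(t_1)-\A(t_2)\big)\psi=\big(A(t_1,\cdot)-A(t_2,\cdot)\big)\psi$ is decisive: the integrand is estimated in $\E_0$ by $\|\U_{\Delta'}(t,\tau)\|_{\ml(\E_0)}\,\|A(\tau_k,\cdot)-A(\tau_j',\cdot)\|_{L_\infty(J,\ml(E_1,E_0))}\,\|\U_\Delta(\tau,s)\phi\|_{\E_1}$, so that only the $\E_1$-norm of $\U_\Delta(\tau,s)\phi$ --- controlled uniformly by the stability in $\E_1$ --- enters, never the $\Y$-norm. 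Since $\tau\mapsto A(\tau)$ is uniformly continuous into $L_\infty(J,\ml(E_1,E_0))$ by \eqref{A1a}, the operator differences become uniformly small as the meshes shrink, and $(\U_\Delta(t,s)\phi)_\Delta$ is Cauchy in $\E_0$. I expect this step --- trading the missing $\Y$-stability for the $\E_1$-stability through the structural identity for the operator differences --- to be the main obstacle and the heart of the argument.

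With the uniform $\E_0$-bound and the density of $\Y$ in $\E_0$, the strong limit defines $\U_\A(t,s)\in\ml(\E_0)$ and yields \eqref{E1}. The evolution property \eqref{ES1}, the strong continuity \eqref{ES2}, and the differentiation formulas \eqref{E2}--\eqref{E3} then follow exactly as in \cite[Theorem~5.3.1]{Pazy} by passing to the limit in the corresponding relations for $\U_\Delta$, using that $\Y$ is a core for $\A(t)$ (Lemma~\ref{L1a}) and that $(\mS_t(s)\vert_\Y)_{s\ge0}$ is strongly continuous on $\Y$ (Corollary~\ref{C1}). Uniqueness I would obtain from \eqref{E3} by differentiating $\tau\mapsto\tilde{\U}(t,\tau)\U_\A(\tau,s)\phi$ for $\phi\in\Y$ and invoking density.

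For the interpolation statement, observe first that each approximant is, by complex interpolation of its $\E_0$- and $\E_1$-bounds together with the identification $[\E_0,\E_1]_\alpha=L_1(J,[E_0,E_1]_\alpha)=\E_\alpha$, bounded on $\E_\alpha$ with $\|\U_\Delta(t,s)\|_{\ml(\E_\alpha)}\le M_0^{1-\alpha}M_1^\alpha e^{\eta(t-s)}$. For $\phi\in\Y$ the interpolation inequality $\|g\|_{\E_\alpha}\le C\,\|g\|_{\E_0}^{1-\alpha}\|g\|_{\E_1}^\alpha$ applied to $g=\U_\Delta(t,s)\phi-\U_{\Delta'}(t,s)\phi$ shows $(\U_\Delta(t,s)\phi)_\Delta$ to be Cauchy also in $\E_\alpha$ --- crucially because the exponent $1-\alpha$ on the vanishing $\E_0$-factor is positive for $\alpha<1$, while the $\E_1$-factor stays bounded. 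Passing to the limit and extending by density of $\Y$ in $\E_\alpha$ gives $\U_\A(t,s)\in\ml(\E_\alpha)$ with $M_\alpha=M_0^{1-\alpha}M_1^\alpha$ and \eqref{E1x}. Finally, the strong continuity \eqref{ES2alpha} follows for $\phi\in\Y$ from the same interpolation inequality applied to the couple $(\E_0,\E_\beta)$ with some $\beta\in(\alpha,1)$, using the uniform $\E_\beta$-bound on $\U_\A(\cdot,\cdot)\phi$ and the $\E_0$-continuity \eqref{ES2}, and then for general $\phi\in\E_\alpha$ by density and the uniform bound \eqref{E1x}.
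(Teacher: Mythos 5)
Your construction and convergence argument are the paper's own (the paper uses the uniform partitions $t_k^n:=(k/n)T$ rather than general partitions, which changes nothing): the same piecewise-constant approximants built from the semigroups of Proposition~\ref{T1}, the same use of the identity~\eqref{key} to trade the unavailable $\Y$-stability for the $\E_1$-stability of Proposition~\ref{P2} inside the telescoping integral, and the same passage to the limit plus density of $\Y$ in $\E_0$. Your treatment of \eqref{E1x} and \eqref{ES2alpha} — interpolating the Cauchy estimate so that the vanishing $\E_0$-factor carries the exponent $1-\alpha$, then obtaining strong continuity by reiterating through an intermediate $\E_\beta$ with $\beta\in(\alpha,1)$ — is also exactly the paper's argument.

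The one genuine flaw is your uniqueness step. You propose to differentiate $\tau\mapsto\tilde{\U}(t,\tau)\U_{\A}(\tau,s)\phi$ for $\phi\in\Y$, but this map is not known to be differentiable, for two reasons: (i) applying \eqref{E3} to $\tilde{\U}(t,\cdot)$ at the vector $\U_{\A}(\tau,s)\phi$ requires $\U_{\A}(\tau,s)\phi\in\Y$, and it is precisely \emph{unknown} whether $\U_{\A}$ leaves $\Y$ invariant — a point the paper itself stresses when discussing~\eqref{NCP}; (ii) you would also need differentiability of $\tau\mapsto\U_{\A}(\tau,s)\phi$ in its \emph{first} variable, whereas the theorem only supplies the one-sided derivative~\eqref{E2} at $\tau=s$; an identity of the form $\partial_t\U_{\A}(t,s)\phi=\A(t)\U_{\A}(t,s)\phi$ is exactly the kind of statement that the missing $\Y$-stability prevents one from proving here. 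The correct argument — the paper's, following Pazy — compares the candidate $\tilde{\U}$ with the \emph{approximants} rather than with $\U_{\A}$: for $\phi\in\Y$ the map $r\mapsto\tilde{\U}(t,r)\U_{\Delta}(r,s)\phi$ is differentiable off the partition points, because $\U_{\Delta}(r,s)\phi\in\Y$ (Lemma~\ref{L1}) and $\U_{\Delta}$ is piecewise generated by $\A_{\Delta}$; integrating and using~\eqref{key} gives
\begin{equation*}
\tilde{\U}(t,s)\phi-\U_{\Delta}(t,s)\phi=\int_s^t\tilde{\U}(t,r)\big(A(r)-A_{\Delta}(r)\big)\U_{\Delta}(r,s)\phi\,\rd r\,,
\end{equation*}
whose right-hand side tends to $0$ as the mesh shrinks, and density of $\Y$ in $\E_0$ then yields $\tilde{\U}=\U_{\A}$. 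You already have every ingredient for this repair, but as written your step would fail.
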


The exponential bound~\eqref{E1} implies that the evolution system in $\E_0$ is expo\-nen\-tially stable provided that  \mbox{$\omega_0+M_0\|b\|_0<0$}. Also note that the second part of Theorem~\ref{T2} (together with~\eqref{ES1}) entails that $(\U_{\A}(t,s))_{0\le s\le t\le T}$ is an evolution system in~$\E_\alpha$ for every $\alpha\in [0,1)$.\\

In some cases one may derive additional information the evolution system:

\begin{cor}\label{C00}
Let $a_m<\infty$ and suppose  \eqref{A1}, \eqref{A11}, \eqref{A2}, and \eqref{A3}. Let  $p\in (1,\infty)$ and consider $\theta\in [0,1]$. Let $(\cdot,\cdot)_\theta$ be either the complex interpolation functor $[\cdot,\cdot]_\theta$ or the real interpolation functor $(\cdot,\cdot)_{\theta,p}$ and suppose that $E_\theta:=(E_0,E_1)_\theta$ is reflexive.
Then  there are constants $N_0\ge 1$ and $\xi_0\in\R$ (depending on $M_\ell$, $\omega_\ell$, $b$, and $\theta$) such that
\begin{equation}\label{emb1r}
\|\U_{\A}(t,s)\|_{\ml(L_p(J,E_\theta))}\le N_0 e^{\xi_0(t-s)}\,,\quad 0\le s\le t\le T\,.
\end{equation}
In particular, 
\begin{equation}\label{emb2r}
\|\U_{\A}(t,s)\|_{\ml(\Y,\E_\theta)}\le N_0 e^{\xi_0(t-s)} \,,\quad 0\le s\le t\le T\,,
\end{equation}
for every $\theta\in [0,1)$, where $\E_\theta=L_1(J,E_\theta)$.
\end{cor}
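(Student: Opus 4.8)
The plan is to establish \eqref{emb1r} by transferring the $L_1$-estimates already available from Theorem~\ref{T2} and Proposition~\ref{P2} up to the scale $L_p(J,E_\theta)$, using crucially that $J$ is now compact (so that $L_p(J)\hookrightarrow L_1(J)$) and that $L_p(J,E_\theta)$ is reflexive when $p\in(1,\infty)$ and $E_\theta$ is reflexive. The estimate \eqref{emb2r} will be obtained separately and more cheaply from the complex-interpolation bound \eqref{E1x}. The backbone of the argument for \eqref{emb1r} is that $\U_{\A}(t,s)$ is, by the construction behind Theorem~\ref{T2} (the proof copying \cite[Theorem~5.3.1]{Pazy}), the strong limit in $\E_0=L_1(J,E_0)$ of approximating operators $\U_n(t,s)$, each of which is a finite product of the semigroup operators $\mS_{t_j}(\cdot)$. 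It therefore suffices to bound these approximants uniformly in the $L_p(J,E_\theta)$-norm and to pass to the limit.

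First I would verify that each $\mS_t(s)$ is bounded on $L_p(J,E_\ell)$ for $\ell\in\{0,1\}$, uniformly in $t\in[0,T]$. In the representation \eqref{100a} the transport branch $a\mapsto U_{A(t)}(a,a-s)\phi(a-s)$ is handled by a measure-preserving shift in age together with \eqref{EO}, giving a bound $Me^{\varpi s}\|\phi\|_{L_p(J,E_\ell)}$; the birth branch is controlled by \eqref{6ax}, which bounds $\|B_\phi^t\|_{C(\R^+,E_0)}$ by $\|\phi\|_{\E_0}=\|\phi\|_{L_1(J,E_0)}\le c\,\|\phi\|_{L_p(J,E_0)}$ since $a_m<\infty$, after which the remaining integral over the compact set $J$ is harmless. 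The decisive step is then the $L_p$-analogue of Proposition~\ref{P2}: because the stability hypothesis \eqref{A11} is a pointwise-in-age estimate on the products $\prod_j U_{A(t_j)}(a_j,\sigma_j)$, it is insensitive to the integrability exponent, so the very computation that yields stability of $(\A(t))_t$ in $\E_0$ and $\E_1$ should yield uniform bounds for the products $\U_n(t,s)$ in $L_p(J,E_0)$ and $L_p(J,E_1)$. Interpolating in the state variable, via $[L_p(J,E_0),L_p(J,E_1)]_\theta=L_p(J,[E_0,E_1]_\theta)$ in the complex case and $(L_p(J,E_0),L_p(J,E_1))_{\theta,p}=L_p(J,(E_0,E_1)_{\theta,p})$ in the real case (the latter identity being exactly why the real interpolation exponent must be taken equal to $p$), produces $\|\U_n(t,s)\|_{\ml(L_p(J,E_\theta))}\le N_0\,e^{\xi_0(t-s)}$ with constants independent of $n$.

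With the approximants uniformly bounded in $L_p(J,E_\theta)$, reflexivity enters. Fixing $\phi\in L_p(J,E_\theta)\subset\E_0$, the sequence $(\U_n(t,s)\phi)_n$ is bounded in the reflexive space $L_p(J,E_\theta)$ and hence has a subsequence converging weakly there to some $g$; on the other hand $\U_n(t,s)\phi\to\U_{\A}(t,s)\phi$ strongly in $\E_0$, and since the embedding $L_p(J,E_\theta)\hookrightarrow\E_0$ is continuous, hence weak--weak continuous, the two limits must agree. Thus $\U_{\A}(t,s)\phi=g\in L_p(J,E_\theta)$, and weak lower semicontinuity of the norm gives $\|\U_{\A}(t,s)\phi\|_{L_p(J,E_\theta)}\le N_0\,e^{\xi_0(t-s)}\|\phi\|_{L_p(J,E_\theta)}$, which is \eqref{emb1r}. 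The hard part, and the step I expect to cost the most, is precisely the uniform $L_p$-bound for the products $\U_n(t,s)$: one must re-run the stability argument of Proposition~\ref{P2} in the $L_p$-setting and check that the birth contributions encoded in the Volterra equation \eqref{500} still close up under the $L_p$-norm in age. The identification of the weak limit and the two interpolation identities are then routine.

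Finally, \eqref{emb2r} does not require the $L_p$-machinery and follows directly from \eqref{E1x}. Given $\theta\in[0,1)$, choose $\theta'\in(\theta,1)$; by \eqref{E1x} one has $\U_{\A}(t,s)\in\ml(\E_{\theta'})$ with $\E_{\theta'}=L_1\big(J,[E_0,E_1]_{\theta'}\big)$. Since $E_1\hookrightarrow[E_0,E_1]_{\theta'}$ we get $\Y\hookrightarrow L_1(J,E_1)\hookrightarrow\E_{\theta'}$, and since $[E_0,E_1]_{\theta'}\hookrightarrow(E_0,E_1)_{\theta,p}=E_\theta$ for $\theta'>\theta$ (this ordering holding for the complex as well as the real functor) we have $\E_{\theta'}\hookrightarrow L_1(J,E_\theta)=\E_\theta$. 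Composing $\Y\hookrightarrow\E_{\theta'}\xrightarrow{\U_{\A}(t,s)}\E_{\theta'}\hookrightarrow\E_\theta$ yields \eqref{emb2r} for every $\theta\in[0,1)$.
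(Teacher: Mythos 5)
Your proof of \eqref{emb1r} follows the paper's own route essentially step by step. The uniform bound in $L_p(J,E_\ell)$ for products of the semigroups $\mS_{t_j}(s_j)$, which you correctly single out as the decisive (and hardest) step, is precisely the paper's Corollary~\ref{CP2}: its proof re-runs the product formula and Gronwall computation of Proposition~\ref{P2} with $p$-th powers, closing the birth (Volterra) contributions through the $\E_\ell$-based bound \eqref{stableB}, which is then dominated by the $L_p$-norm exactly because $a_m<\infty$ --- the same mechanism you describe. The interpolation identities in the age-integrated scale (with second exponent equal to $p$ in the real case), the uniform bound \eqref{compii} on the approximants $\U_n(t,s)$, and the Eberlein--Smulyan/weak lower semicontinuity argument identifying the weak $L_p(J,E_\theta)$-limit with the strong $\E_0$-limit $\U_\A(t,s)\psi$ are all identical to the paper's proof. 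Where you genuinely diverge is \eqref{emb2r}: the paper deduces it from \eqref{emb1r} via Lemma~\ref{L17}, i.e.\ the Besov-embedding machinery giving $\Y\hookrightarrow L_p(J,E_\theta)$ for $\theta<1/p$, followed by $L_p(J,E_\theta)\hookrightarrow\E_\theta$ (using $a_m<\infty$); you instead bypass $L_p$ and Lemma~\ref{L17} altogether, composing $\Y\hookrightarrow\E_1\hookrightarrow\E_{\theta'}$ with the already established bound \eqref{E1x} on $\E_{\theta'}$ and the inclusion $[E_0,E_1]_{\theta'}\hookrightarrow(E_0,E_1)_{\theta,p}$ for $\theta'>\theta$, which is indeed valid here since the couple is nested ($E_1\hookrightarrow E_0$). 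Your route is more elementary and in fact yields the stronger conclusion that $\U_\A(t,s)$ maps all of $\E_1$ (not just $\Y$) into $\E_\theta$ with an exponential bound; its only cosmetic drawback is that the constants it produces come from \eqref{E1x} rather than being the same $N_0,\xi_0$ as in \eqref{emb1r}, which is immaterial since one may enlarge constants, whereas the paper's derivation keeps both estimates tied to one mechanism and one set of constants.
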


\begin{rem}
The complex interpolation space $[E_0,E_1]_{\theta}$ and the real interpolation space $(E_0,E_1)_{\theta,p}$ with $p\in (1,\infty)$ are reflexive if both~$E_0$ and~$E_1$ are reflexive (e.g. see \cite[Corollary 4.5.2, Theorem~3.8.1]{BerghLoefstroem}). For $\theta=0$ or $\theta=1$ only $E_0$ respectively $E_1$ need to be reflexive, of course. 
\end{rem}

Theorem~\ref{T2} implies that one can associate a mild solution with the (non-homegeneous) Cauchy problem
\begin{equation}\label{NCP}
u'(t)=\A(t) u(t)+f(t)\,,\quad t\in [0,T]\,,\qquad u(0)=\phi\,.
\end{equation}
Indeed, under the assumptions of Theorem~\ref{T2} and given $\phi\in\E_\alpha$ and $f\in L_1((0,T),\E_\alpha)$ for some $\alpha\in (0,1)$, the mild solution $u\in C([0,T],\E_\alpha)$ to~\eqref{NCP} is 
\begin{equation}\label{vdk}
u(t)=\U_{\A}(t,0)\phi+\int_0^t \U_{\A}(t,\sigma)\, f(\sigma)\,,\quad 0\le t\le T\,.
\end{equation}
Unfortunately, it is not know whether or not the evolution system $\U_{\A}(t,s)$ leaves $\Y$ invariant and is strongly continuous on $\Y$; the only information in this direction stems from~\eqref{emb2r}. Thus, a similar result as in \cite[Theorem~5.4.3]{Pazy} regarding the existence of regular $\Y$-valued (strong) solutions to~\eqref{NCP}, which would necessarily be given by formula~\eqref{vdk}, may not be available. Nevertheless, Theorem~\ref{T2} is the basis to provide the existence of mild solutions to quasilinear problems as shown next.

\subsection*{Quasilinear Case}

We may interpret the evolution equation~\eqref{QP}  as a quasilinear Cauchy problem
\begin{equation}\label{QCP}
u'(t)=\A\big(u(t),t\big) u(t)\,,\quad t\in [0,T]\,,\qquad u(0)=\phi\,,
\end{equation}
in $\E_0$. 
In order to provide a well-posedness result we consider $\phi\in\E_1$ and let $\mathcal{B}:=\bar\B_{\E_0}(\phi,r_0)$ be the closed ball in $\E_0$  centered at $\phi$ with some positive radius $r_0>0$. Suppose that 
\begin{subequations}\label{AA}
\begin{equation}\label{lip1}
A(v)=A(v,\cdot,\cdot)\in   C\big([0,T],L_\infty\big(J,\mathcal{L}(E_1,E)\big)\big)\,,\quad v\in\mathcal{B}\,,
\end{equation}
and there is $\rho>0$ with
\begin{equation}\label{lip2}
A(v,t,\cdot)\in   C^\rho\big(J,\mathcal{H}(E_1,E)\big)\,,\quad t\in [0,T]\,,\quad v\in\mathcal{B}\,.
\end{equation}
Moreover, assume there is $L>0$ with
\begin{equation}\label{lipp}
\| A(v_1,t,\cdot)-A(v_2,t,\cdot)\|_{L_\infty(J,\ml(E_1,E_0))}\le L\,\|v_1-v_2\|_{\E_0}\,,\quad v_1,v_2\in\mathcal{B}\,,\quad t\in [0,T]\,.
\end{equation}
Further suppose that for $\ell\in\{0,1\}$ there are $M_\ell\ge 1$ and $\omega_\ell\in \R$ such that
\begin{equation}\label{A11b}
\bigg\|\prod_{j=1}^{n}U_{A(v_j,t_j)}(a_j,\sigma_j)\bigg\|_{\ml(E_\ell)}\le M_\ell \exp\bigg\{\omega_\ell\sum_{j=1}^{n}(a_j-\sigma_j)\bigg\}\,,\quad 0\le\sigma_j\le a_j<a_m\,,
\end{equation}
for arbitrary $v_j\in \mathcal{B}$ and any finite sequence $0\le t_1\le t_2\le\ldots\le t_n\le T$ and $n=1,2,\ldots $.
Moreover, 
\begin{align}\label{A21}
\text{$[t\mapsto A(v,t)]$ satisfies \eqref{EO} with \eqref{A4} and \eqref{A3} for every $v\in\mathcal{B}$}\,.
\end{align}
\end{subequations}

We can state the following well-posedness result regarding the quasilinear problem~\eqref{QCP}:

\begin{thm}\label{T3}
Suppose \eqref{A2}. Let $\phi\in\E_1$ and assume~\eqref{AA}. Then, there are $T_\phi\in (0,T]$ and a unique mild solution $u\in C([0,T_\phi],\E_0)$ to the quasilinear problem~\eqref{QCP} with $u(t)\in \mathcal{B}$ for~$t\in [0,T_\phi]$, given by
$$
u(t)=\U_{\A(u)}(t,0)\phi\,,\quad t\in [0,T_\phi]\,,
$$
where $(\U_{\A(u)}(t,s))_{0\le s\le t\le T_\phi}$ is the unique evolution system generated by $t\mapsto \A\big(u(t),t\big)$. Moreover, $u\in C([0,T_\phi],\E_\alpha)$ for every $\alpha\in [0,1)$ with $\E_\alpha=L_1(J,[E_0,E_1]_\alpha)$.
\end{thm}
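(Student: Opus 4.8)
The plan is to solve the quasilinear Cauchy problem~\eqref{QCP} by a contraction argument on the coefficient: freeze $u$ inside $\A$ and invoke the linear theory of Theorem~\ref{T2}. For $T_\phi\in(0,T]$ to be chosen later I would work on the complete metric space
\[
\mathcal{V}_{T_\phi}:=\bigl\{w\in C([0,T_\phi],\E_0)\,;\ w(0)=\phi,\ \sup_{0\le t\le T_\phi}\|w(t)-\phi\|_{\E_0}\le r_0\bigr\}
\]
(so that $w(t)\in\mathcal{B}$ for all $t$) endowed with the supremum distance, and define the map $\Gamma w:=\bigl[t\mapsto\U_{\A(w)}(t,0)\phi\bigr]$, where $\U_{\A(w)}$ is the evolution system generated by $t\mapsto\A(w(t),t)$. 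A fixed point $u=\Gamma u$ is precisely the asserted mild solution $u(t)=\U_{\A(u)}(t,0)\phi$.

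First I would check that for each $w\in\mathcal{V}_{T_\phi}$ the frozen family $t\mapsto A(w(t),t)$ satisfies the hypotheses of Theorem~\ref{T2}, \emph{with constants independent of} $w$. The H\"older-in-age regularity in \eqref{A1a} comes from \eqref{lip2}; the continuity $t\mapsto A(w(t),t)\in C([0,T_\phi],L_\infty(J,\ml(E_1,E_0)))$ follows by splitting $A(w(t),t)-A(w(s),s)$ and combining \eqref{lip1} with the Lipschitz bound \eqref{lipp} and the continuity of $w$; the evolution-operator bounds \eqref{EO}, \eqref{A4} and the positivity \eqref{A3} are supplied by \eqref{A21}; the stability \eqref{A11} is exactly \eqref{A11b}, whose constants $M_\ell,\omega_\ell$ are uniform over $v\in\mathcal{B}$; and \eqref{A2} is assumed. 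Hence Theorem~\ref{T2} yields a unique evolution system $\U_{\A(w)}(t,s)$ with the $w$-independent bounds \eqref{E1} and \eqref{E1x}, which is what renders all subsequent estimates uniform in $w\in\mathcal{V}_{T_\phi}$.

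The core is the contraction estimate. For $w_1,w_2\in\mathcal{V}_{T_\phi}$ I would use the standard difference identity for evolution systems,
\[
\U_{\A(w_1)}(t,0)\phi-\U_{\A(w_2)}(t,0)\phi=\int_0^t\U_{\A(w_1)}(t,\sigma)\bigl(\A(w_1(\sigma),\sigma)-\A(w_2(\sigma),\sigma)\bigr)\U_{\A(w_2)}(\sigma,0)\phi\,\rd\sigma,
\]
and invoke the observation highlighted in the text, namely that $\bigl(\A(w_1(\sigma),\sigma)-\A(w_2(\sigma),\sigma)\bigr)\psi=\bigl(A(w_1(\sigma),\sigma,\cdot)-A(w_2(\sigma),\sigma,\cdot)\bigr)\psi$ is well-defined already for $\psi\in\E_1$. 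Together with \eqref{lipp} this bounds the integrand in $\E_0$ by $L\,\|w_1(\sigma)-w_2(\sigma)\|_{\E_0}\,\|\U_{\A(w_2)}(\sigma,0)\phi\|_{\E_1}$, and then \eqref{E1} gives $\|\Gamma w_1-\Gamma w_2\|_{C([0,T_\phi],\E_0)}\le C\,L\,T_\phi\,\|\phi\|_{\E_1}\,\|w_1-w_2\|_{C([0,T_\phi],\E_0)}$, a contraction once $T_\phi$ is small. The self-mapping $\Gamma(\mathcal{V}_{T_\phi})\subset\mathcal{V}_{T_\phi}$ I would obtain by integrating \eqref{E3}, which gives $\U_{\A(w)}(t,0)\psi-\psi=\int_0^t\U_{\A(w)}(t,s)\,\A(w(s),s)\psi\,\rd s$ for $\psi\in\Y$; the right-hand side is $O(t)$ uniformly in $w\in\mathcal{B}$, and approximating $\phi\in\E_1$ by elements of the dense subspace $\Y\subset\E_0$ while using the uniform bound \eqref{E1} then forces $\|\U_{\A(w)}(t,0)\phi-\phi\|_{\E_0}\le r_0$ after shrinking $T_\phi$.

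The main obstacle is hidden in the factor $\|\U_{\A(w_2)}(\sigma,0)\phi\|_{\E_1}$: the difference operator only acts on $\E_1$, yet Theorem~\ref{T2} provides boundedness of $\U_{\A(w)}$ merely on $\E_\alpha$ for $\alpha<1$, and $\Y$-invariance is unavailable. The hard part will therefore be to guarantee that $\U_{\A(w)}(\sigma,0)\phi$ retains $\E_1$-regularity for the datum $\phi\in\E_1$, with a bound uniform in $w\in\mathcal{B}$ and with $\sigma\mapsto\U_{\A(w)}(\sigma,0)\phi$ strongly measurable into $\E_1$, so that the integrand above is $\E_0$-Bochner integrable. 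I expect to secure this by exploiting the $\E_1$-stability of $(\A(t))$ from Proposition~\ref{P2}: it keeps the Kato product approximants underlying $\U_{\A(w)}$ uniformly bounded on $\E_1$, whence their $\E_0$-limit restricts to a bounded operator on $\E_1$ when applied to the $\E_1$-datum $\phi$. Once this $\E_1$-control is in place, the Banach fixed-point theorem produces a unique $u\in C([0,T_\phi],\E_0)$ with $u=\Gamma u$, which is the desired mild solution; the additional regularity $u\in C([0,T_\phi],\E_\alpha)$ for every $\alpha\in[0,1)$ then follows from the $\E_\alpha$-strong continuity \eqref{ES2alpha} of $\U_{\A(u)}$ applied to $\phi\in\E_1\subset\E_\alpha$.
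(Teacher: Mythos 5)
Your overall strategy (freeze the coefficient, apply Theorem~\ref{T2}, run Banach's fixed point theorem on $\bar\B_{C([0,T_\phi],\E_0)}(\phi,r_0)$) is the same as the paper's, but the execution of the key contraction estimate has a genuine gap, and you have correctly located it yourself: the factor $\|\U_{\A(w_2)}(\sigma,0)\phi\|_{\E_1}$. Your proposed rescue does not work. You argue that since the Kato approximants $\U_n(t,s)$ are uniformly bounded on $\E_1$ (by Proposition~\ref{P2}), their $\E_0$-limit ``restricts to a bounded operator on $\E_1$ when applied to the $\E_1$-datum $\phi$.'' This inference requires the closed balls of $\E_1$ to be closed in $\E_0$, which is exactly what fails here: $\E_1=L_1(J,E_1)$ is not reflexive, and an $\E_0$-convergent sequence that is bounded in $\E_1$ need not have its limit in $\E_1$ (take $\psi_n\equiv f_n$ with $\|f_n\|_{E_1}=1$ and $f_n\to f$ in $E_0$, $f\notin E_1$). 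This is precisely why the paper's Corollary~\ref{C00} has to assume reflexivity of $E_\theta$ and $p\in(1,\infty)$ in order to invoke Eberlein--Smulyan; no such assumption is available in Theorem~\ref{T3}. In addition, your ``standard difference identity'' applied directly to the limiting evolution systems is itself unjustified: differentiating $\sigma\mapsto\U_{\A(w_1)}(t,\sigma)\U_{\A(w_2)}(\sigma,0)\phi$ requires $\U_{\A(w_2)}(\sigma,0)\phi$ to lie in $\Y$ (so that \eqref{E2}/\eqref{E3} apply), and the paper states explicitly that it is not known whether $\U_{\A}(t,s)$ leaves $\Y$ invariant.

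The paper circumvents both problems by proving the continuous-dependence estimate at the level of the approximants rather than the limits (Proposition~\ref{P1.6}): for $\psi\in\Y$ one differentiates $\tau\mapsto\U_n^1(t,\tau)\U_n^2(\tau,s)\psi$, which is legitimate because the $\U_n^k$ are finite products of the semigroups $\mS_t(\cdot)$, hence leave $\Y$ invariant (Lemma~\ref{L1}) and satisfy the uniform $\E_1$-bound \eqref{3.8}. This gives
\begin{equation*}
\|\U_n^1(t,s)\psi-\U_n^2(t,s)\psi\|_{\E_0}\le M_0M_1\,e^{\eta(t-s)}\,\|\psi\|_{\E_1}\int_s^t\|A_{1,n}(\tau)-A_{2,n}(\tau)\|_{\ml(\E_1,\E_0)}\,\rd\tau\,,
\end{equation*}
where only the norm of the \emph{fixed datum} $\psi$ appears in $\E_1$, never an $\E_1$-norm of the evolved state. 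One then passes to the limit $n\to\infty$ in $\E_0$ on the left (using \eqref{3.15} and \eqref{3.4}) and extends to $\psi\in\E_1$ by the density of $\Y$ in $\E_1$ (Corollary~\ref{C11}). Combining this with \eqref{lipp} yields the estimate you wanted, with constants uniform over $\mathcal{B}$, and the fixed-point argument then closes as you describe. So the repair is: do not touch the limiting evolution systems in the difference identity; carry out that computation entirely on the approximants \eqref{3.5} and take limits only in the final inequality.
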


Based on Corollary~\ref{C00} we may prove a slightly different result:

\begin{cor}\label{CT3}
Let $a_m<\infty$ and fix $p\in (1,\infty)$ and $\theta\in [0,1]$. 
Let $(\cdot,\cdot)_\theta$ be either the complex interpolation functor $[\cdot,\cdot]_\theta$ or the real interpolation functor $(\cdot,\cdot)_{\theta,p}$ and suppose that $E_\theta:=(E_0,E_1)_\theta$ is reflexive.
Let $\phi\in \E_1\cap L_p(J,E_\theta)$. Then Theorem~\ref{T3} remains true if~$\mathcal{B}$ therein is everywhere replaced by 
$$
\mathcal{B}_0:=\big\{v\in \bar\B_{\E_0}(\phi,r_0)\,;\, v\in L_p(J,E_\theta)\,,\,\|v\|_{L_p(J,E_\theta)}\le (N_0+r_0)\|\phi\|_{L_p(J,E_\theta)}\big\}\subset \E_0
$$ 
for some $r_0>0$ and with $N_0$ given in Corollary~\ref{C00}. 
\end{cor}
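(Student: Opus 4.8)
The plan is to combine the two sources of control that have already been established: the abstract well-posedness machinery of Theorem~\ref{T3}, and the additional $L_p(J,E_\theta)$-boundedness of the evolution system provided by Corollary~\ref{C00}. The only real modification is to check that the fixed-point argument underlying Theorem~\ref{T3} can be carried out on the smaller set $\mathcal{B}_0$ rather than on the full ball $\mathcal{B}=\bar\B_{\E_0}(\phi,r_0)$, so that the a priori bound $\|v\|_{L_p(J,E_\theta)}\le (N_0+r_0)\|\phi\|_{L_p(J,E_\theta)}$ is self-improving along the iteration and hence preserved under the solution map.

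First I would recall the structure of the proof of Theorem~\ref{T3}: one defines a map $\Gamma$ sending a curve $v\in C([0,T_\phi],\E_0)$ with values in $\mathcal{B}$ to the curve $[\Gamma v](t):=\U_{\A(v)}(t,0)\phi$, where $(\U_{\A(v)}(t,s))$ is the evolution system generated by $t\mapsto\A(v(t),t)$ (which exists by Theorem~\ref{T2}, the hypotheses~\eqref{AA} guaranteeing that the assumptions of that theorem hold uniformly for $v\in\mathcal{B}$), and one shows that $\Gamma$ is a contraction on a suitable complete metric space of such curves for $T_\phi$ small. To work on $\mathcal{B}_0$ I would verify two points. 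The first is that $\mathcal{B}_0$ is a closed subset of $\bar\B_{\E_0}(\phi,r_0)$ in the $\E_0=L_1(J,E_0)$-topology, so that the relevant space of curves $\{v\in C([0,T_\phi],\E_0):v(t)\in\mathcal{B}_0\}$ remains complete; this follows because the $L_p(J,E_\theta)$-norm is lower semicontinuous with respect to $\E_0$-convergence (passing to a subsequence converging a.e.\ and applying Fatou), so the bound $\|v(t)\|_{L_p(J,E_\theta)}\le(N_0+r_0)\|\phi\|_{L_p(J,E_\theta)}$ is preserved under $\E_0$-limits.

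The second and more substantive point is \emph{self-mapping}: I must show $\Gamma$ maps curves valued in $\mathcal{B}_0$ into curves valued in $\mathcal{B}_0$. The $\E_0$-part, namely $[\Gamma v](t)\in\bar\B_{\E_0}(\phi,r_0)$ for $T_\phi$ small, is inherited verbatim from the proof of Theorem~\ref{T3} since $\mathcal{B}_0\subset\mathcal{B}$ and all the bounds in~\eqref{AA} continue to hold on the smaller set. For the $L_p(J,E_\theta)$-part I would invoke Corollary~\ref{C00}, whose hypotheses (reflexivity of $E_\theta$, $a_m<\infty$, $p\in(1,\infty)$, and~\eqref{A1},~\eqref{A11},~\eqref{A2},~\eqref{A3} holding uniformly for the relevant operator families via~\eqref{AA}) are exactly those assumed here. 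Corollary~\ref{C00} yields, for each admissible $v$,
\begin{equation*}
\|\U_{\A(v)}(t,0)\|_{\ml(L_p(J,E_\theta))}\le N_0 e^{\xi_0 t}\,,\qquad 0\le t\le T_\phi\,,
\end{equation*}
with $N_0,\xi_0$ uniform in $v$ (the constants in Corollary~\ref{C00} depend only on $M_\ell,\omega_\ell,b,\theta$, which are fixed across $\mathcal{B}$ by~\eqref{A11b} and~\eqref{A21}). Since $\phi\in L_p(J,E_\theta)$ by hypothesis, this gives $\|[\Gamma v](t)\|_{L_p(J,E_\theta)}=\|\U_{\A(v)}(t,0)\phi\|_{L_p(J,E_\theta)}\le N_0 e^{\xi_0 t}\|\phi\|_{L_p(J,E_\theta)}$, and choosing $T_\phi$ small enough that $e^{\xi_0 T_\phi}\le 1+r_0/N_0$ (harmless if $\xi_0\le 0$) yields the bound $\le(N_0+r_0)\|\phi\|_{L_p(J,E_\theta)}$ defining $\mathcal{B}_0$. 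Thus $\Gamma$ is a self-map of the curve space over $\mathcal{B}_0$.

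The contraction estimate and the uniqueness statement then require \emph{no} change: they were derived in the proof of Theorem~\ref{T3} purely from the $\E_0$-Lipschitz bound~\eqref{lipp} and the $\E_0$-stability of the evolution systems, both of which hold on $\mathcal{B}_0\subset\mathcal{B}$. Hence the Banach fixed point theorem applies on the complete space of $\mathcal{B}_0$-valued curves (after possibly shrinking $T_\phi$ to secure contractivity), producing the unique mild solution $u(t)=\U_{\A(u)}(t,0)\phi$ with $u(t)\in\mathcal{B}_0$, and the claimed $\E_\alpha$-regularity for $\alpha\in[0,1)$ follows from the final assertion of Theorem~\ref{T2} just as before. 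The one step needing genuine care—and the main obstacle—is confirming that the constants $N_0,\xi_0$ from Corollary~\ref{C00} are truly independent of $v\in\mathcal{B}_0$; this hinges on tracing through Corollary~\ref{C00} to see that its constants depend only on the uniform stability data $M_\ell,\omega_\ell$ (supplied by~\eqref{A11b}) and on $b,\theta$, and not on any individual operator $A(v,\cdot,\cdot)$, so that the $L_p(J,E_\theta)$ a priori bound is genuinely uniform over the iteration.
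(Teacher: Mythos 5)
Your proposal is correct and is essentially the paper's own proof: the paper likewise restricts the fixed-point scheme of Theorem~\ref{T3} to the set $\mathcal{S}_0$ of curves with values in $\mathcal{B}_0$, checks that this set is closed in $C([0,T_\phi],\E_0)$, obtains the self-mapping property from the uniform $L_p(J,E_\theta)$-bound of Corollary~\ref{C00} after shrinking $T_\phi$ so that $N_0e^{\xi_0 T_\phi}\le N_0+r_0$, and leaves the contraction and uniqueness arguments untouched. The only (cosmetic) difference is the closedness step: the paper deduces it directly from the reflexivity of $L_p(J,E_\theta)$ (weak compactness of bounded sets plus weak lower semicontinuity of the norm, with the weak limit identified through $L_p(J,E_\theta)\hookrightarrow\E_0$), whereas your pointwise Fatou argument tacitly needs that same reflexivity, since lower semicontinuity of the $E_\theta$-norm along $E_0$-convergent sequences is precisely what reflexivity of $E_\theta$ supplies.
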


Note that one may take $\theta=1$ in Corollary~\ref{CT3} (and Corollary~\ref{C00}) and that $ L_p(J,E_1)\subset  \E_1$ since $a_m<\infty$.

\subsection*{Paper Outline}
In Section~\ref{Sec2} we prove some auxiliary results. In particular, we show that for fixed $t\in [0,T]$,  the restrictions of the semigroups $(\mS_t(s))_{s\ge 0}$, defined in~\eqref{100}, to $\Y$ respectively to~$\E_1$ define again  strongly continuous semigroups. This then enables us to investigate in Section~\ref{Sec3} stability properties of the family of semigroups $(\mS_t)_{t\in [0,T]}$ in $\E_0$ and $\E_1$.  Section~\ref{Sec4} is dedicated to the proof of Theorem~\ref{T2}, where the construction of the evolution system $(\U_\A(t,s))_{0\le s\le t\le T}$ in $\E_0$ satisfying properties~\eqref{ES1}-\eqref{E3} follows along the lines of~\cite[Theorem~5.3.1]{Pazy}. The additional properties in $\E_\alpha$ are obtained by interpolation. In Section~\ref{Sec5} we give the details for Corollary~\ref{C00}. Finally, in Section~\ref{Sec6} we consider the quasilinear Cauchy problem~\eqref{QCP} and provide the proofs of Theorem~\ref{T3} and Corollary~\ref{CT3}.\\

Throughout the remainder of this paper we assume \eqref{A1},  \eqref{A2}, and \eqref{A3} (the stability assumption~\eqref{A11} is imposed later in Section~\ref{Sec3}).

\section{Admissibility Properties}\label{Sec2}

Recall that $E_1$ is continuously embedded into $E_0$, hence $\E_1=L_1(J,E_1)$ is continuously embedded into $\E_0=L_1(J,E_0)$. We may thus equip the Banach space
$$
\mathbb{Y}=\left\{\psi\in \E_1\cap W_1^1(J,E_0)\,;\,\psi(0)=\int_0^{a_m}b(a)\, \psi(a)\, \rd a\right\}
$$
 with the equivalent norm
\begin{equation}\label{normY}
\|\phi\|_{\Y}:=\|\phi\|_{\E_1}+\|\partial_a\phi\|_{\E_0}\,,\quad \phi\in\Y\,,
\end{equation}
for simplicity. It is also worth pointing out that, with the understanding 
$$
\big(A(t)\psi\big)(a):=A(t,a)\psi(a)\,,\quad a\in J\,,\quad t\in [0,T]\,,\quad \psi\in\E_1\,,
$$ 
we have 
\begin{equation}\label{s7ax}
\|A(t)\psi\|_{\E_0}\le \|A(t,\cdot)\|_{L_\infty(J,\ml(E_1,E_0))}\,\|\psi\|_{\E_1}\,,\quad \psi\in \E_1\,,
\end{equation}
for $t\in [0,T]$, so that we may interpret $A(t)=A(t,\cdot)\in\ml(\E_1,\E_0)$ with
\begin{equation}\label{normA}
\|A(t)\|_{\ml(\E_1,\E_0)}\le \|A(t,\cdot)\|_{L_\infty(J,\ml(E_1,E_0))}\,,\quad t\in [0,T]\,,
\end{equation}
in the following. Moreover,
\begin{equation}\label{contA}
\big(t\mapsto A(t)\big)\in C\big([0,T],\ml(\E_1,\E_0)\big)
\end{equation}
according to assumption~\eqref{A1a}.

\begin{lem}\label{L1a}
$\Y$ is  a dense subspace of $\E_0$ and of $D(\A(t))$ for $t\in [0,T]$. Moreover,
\begin{equation}\label{eq}
\A(t)\psi=-\partial_a\psi+A(t,\cdot)\psi\,,\quad \psi\in\Y\,,
\end{equation}
and \mbox{$\A\in C\big([0,T],\ml(\Y,\E_0)\big)$}.
\end{lem}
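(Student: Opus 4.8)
first establish the formula \eqref{eq} together with the inclusion $\Y\subset D(\A(t))$; then upgrade density in $D(\A(t))$ to density in $\E_0$; and finally verify the continuity $\A\in C([0,T],\ml(\Y,\E_0))$.

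For the formula and inclusion, I would take $\psi\in\Y$ and simply check that $\psi$ is the mild solution to the boundary-value problem appearing in Proposition~\ref{T1}(b) with the choice $\phi:=\partial_a\psi-A(t,\cdot)\psi$. Since $\psi\in\E_1\cap W_1^1(J,E_0)$, the pointwise (a.e.) derivative $\partial_a\psi$ lies in $\E_0$, and $A(t,\cdot)\psi\in\E_0$ by~\eqref{s7ax}, so $\phi\in\E_0$ is well-defined. The variation-of-constants representation recalled in Remark~\ref{R1}(a) shows that $\psi$ is indeed the mild solution to $\partial_a\psi=A(t,\cdot)\psi+\phi$ with initial value $\psi(0)=\int_0^{a_m}b(a)\psi(a)\,\rd a$ (the latter being exactly the boundary condition defining $\Y$). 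By the characterization in Proposition~\ref{T1}(b) this gives $\psi\in\dom(\A(t))$ and $\A(t)\psi=-\phi=-\partial_a\psi+A(t,\cdot)\psi$, which is~\eqref{eq}.

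For density, I would argue that $\Y$ is a core for $\A(t)$. The natural route is to show that $\Y$ is invariant under the semigroup $(\mS_t(s))_{s\ge0}$ (which is precisely Lemma~\ref{L1}, citable here) and dense in $\E_0$, and then invoke the standard fact that a dense, semigroup-invariant subspace of $\dom(\A(t))$ is a core. Density of $\Y$ in $\E_0=L_1(J,E_0)$ follows because $\E_1=L_1(J,E_1)$ is dense in $\E_0$ (as $E_1$ is dense in $E_0$), and one can approximate any $\E_1$-function by smooth, compactly-supported-in-$(0,a_m)$ functions valued in $E_1$, which automatically satisfy the boundary condition trivially (both sides vanish) and hence lie in $\Y$; this requires a short mollification/truncation argument handling the nonlocal boundary constraint, e.g. by first approximating in $\E_1\cap W_1^1$ and then correcting the boundary term using a fixed element realizing a prescribed value at $a=0$.

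For the continuity statement, I would use~\eqref{eq} to write, for $t_1,t_2\in[0,T]$ and $\psi\in\Y$,
\begin{equation*}
\big(\A(t_1)-\A(t_2)\big)\psi=\big(A(t_1,\cdot)-A(t_2,\cdot)\big)\psi\,,
\end{equation*}
so that by~\eqref{normA} and the definition~\eqref{normY} of the $\Y$-norm,
\begin{equation*}
\|(\A(t_1)-\A(t_2))\psi\|_{\E_0}\le \|A(t_1,\cdot)-A(t_2,\cdot)\|_{L_\infty(J,\ml(E_1,E_0))}\,\|\psi\|_{\E_1}\le \|A(t_1)-A(t_2)\|_{\ml(\E_1,\E_0)}\,\|\psi\|_{\Y}\,.
\end{equation*}
Taking the supremum over $\|\psi\|_{\Y}\le1$ yields $\|\A(t_1)-\A(t_2)\|_{\ml(\Y,\E_0)}\le\|A(t_1)-A(t_2)\|_{\ml(\E_1,\E_0)}$, and continuity then follows immediately from~\eqref{contA}. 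The main obstacle is the density argument: establishing that $\Y$ is a core for $\A(t)$ rather than merely contained in $\dom(\A(t))$, since the nonlocal boundary condition couples the interior values to the trace at $a=0$ and must be respected by the approximating sequence; invoking semigroup-invariance (Lemma~\ref{L1}) is what makes this clean, so the real content is the forward reference to that invariance together with elementary density of compactly supported $E_1$-valued functions.
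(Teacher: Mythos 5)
Your first and third steps (the inclusion $\Y\subset\dom(\A(t))$ with formula \eqref{eq}, and the continuity of $t\mapsto\A(t)$ in $\ml(\Y,\E_0)$) coincide with the paper's proof. For the density, however, you take a genuinely different route. The paper never invokes semigroup invariance: given $\psi\in D(\A(t))$ it approximates $\phi=(\lambda-\A(t))\psi$ by $\phi_\ve\in C_c(J,E_\vartheta)$ and shows, via the smoothing estimates \eqref{EO} applied to the variation-of-constants formula, that $\psi_\ve=(\lambda-\A(t))^{-1}\phi_\ve$ lands in $\Y$; this gives density of $\Y$ in $D(\A(t))$ directly in the graph norm, and density in $\E_0$ then comes for free because $\dom(\A(t))$ is dense in $\E_0$. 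Your plan inverts the logic: prove density of $\Y$ in $\E_0$ by hand, then combine the invariance from Lemma~\ref{L1} with the standard core theorem. This is legitimate and non-circular, since the proof of Lemma~\ref{L1} uses only the identity \eqref{eq} (through \eqref{s5}), not any density statement; it trades the paper's $E_\vartheta$-smoothing computations for a soft semigroup argument, at the price of having to exhibit an $\E_0$-dense subset of $\Y$ explicitly --- and that is exactly where your proposal has a gap.

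The gap is concrete: a smooth function $\psi$ compactly supported in $(0,a_m)$ does \emph{not} satisfy the boundary condition "trivially". Its left side $\psi(0)$ vanishes, but the right side $\int_0^{a_m}b(a)\,\psi(a)\,\rd a$ does not vanish in general, precisely because the birth condition is nonlocal; so compactly supported functions do not lie in $\Y$, and the primary claim in your density step fails. Your fallback --- correcting the boundary value by a fixed profile concentrated near $a=0$ --- is the right repair, but it must be carried out with care: writing the correction as $w=\eta(\cdot)u$ with a smooth cutoff $\eta$, $\eta(0)=1$, supported in $[0,\delta]$, the element $u$ enters the nonlocal term as well, so it must solve the implicit equation $u=\int_0^{a_m}b\,\psi_1\,\rd a+\big(\int_0^{a_m}b\,\eta\,\rd a\big)u$. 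This is solvable in $E_1$ (using $b\in L_\infty(J,\ml(E_1))$ from \eqref{A2}) once $\delta$ is small enough that $\big\|\int_0^{a_m}b\,\eta\,\rd a\big\|_{\ml(E_1)}\le\delta\|b\|_1<1$, by a Neumann series; then $\|w\|_{\E_0}\le\delta\,\|u\|_{E_0}\to0$ as $\delta\to0$, so $\psi_1+w\in\Y$ approximates $\psi_1$ in $\E_0$. With this detail supplied, your argument closes and the core theorem (a dense, semigroup-invariant subspace of the domain of a generator is a core) yields the density of $\Y$ in $D(\A(t))$.
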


\begin{proof}
Fix $t\in [0,T]$. Since $\psi\in\Y$ implies $\phi:=\partial_a\psi-A(t,\cdot)\psi\in\E_0$, we deduce $\Y\subset \dom(\A(t))$ and~\eqref{eq} from Proposition~\ref{T1}.
Thus, using~\eqref{normA},
\begin{equation}\label{s5x}
\|\A(t)\psi\|_{\E_0}\le\|\partial_a\psi\|_{\E_0}+ \|A(t,\cdot)\|_{L_\infty(J,\ml(E_1,E_0))}\,\|\psi\|_{\E_1}\,,\quad \psi\in\Y\,,
\end{equation}
so that $\A(t)\in \ml(\Y,\E_0)$ and 
$$
\|\A(t_1)-\A(t_2)\|_{\ml(\Y,\E_0)}\le \|A(t_1,\cdot)-A(t_2,\cdot)\|_{L_\infty(J,\ml(E_1,E_0))}\,,\quad t_1, t_2\in [0,T]\,.
$$
Now, \eqref{A1a} entails $\A\in C\big([0,T],\ml(\Y,\E_0)\big)$. 
To prove that $\Y$ is dense in $D(\A(t))$ let \mbox{$\psi\in D(\A(t))$} and $\ve>0$ be arbitrary. Choose a real number $\lambda$ in the resolvent set of $\A(t)$ with $\lambda>\omega_*$, where  $\omega_*:=\max\{\omega_0,\omega_1\}$. Setting $\phi:=(\lambda-\A(t))\psi\in\E_0$ there is $\phi_\ve\in C_c(J,E_\vartheta)$ with 
$$
\|\phi_\ve-\phi||_{\E_0}\le \frac{\ve}{\|(\lambda-\A(t))^{-1}\|_{\ml(\E_0,D(\A(t)))}}\,.
$$
It then follows from Proposition~\ref{T1} and Remark~\ref{R1}~(a) that $\psi_\ve:=(\lambda-\A(t))^{-1}\phi_\ve\in D(\A(t))$ is given by
$$
\psi_\ve(a)=e^{-\lambda a}U_{A(t)}(a,0)\psi_\ve(0)+\int_0^ae^{-\lambda(a-\sigma)}U_{A(t)}(a,\sigma)\phi_\ve(\sigma)\,\rd \sigma\,,\quad a\in J\,,
$$
with
$$
\psi_\ve(0)=\int_0^{a_m}b(a)\, \psi_\ve(a)\, \rd a\,.
$$
In particular, $\psi_\ve(0)\in E_\vartheta$ since, using the regularizing effects~\eqref{EO} (and interpolation),
\begin{align*}
\int_0^{a_m}\|b(a)\, \psi_\ve(a)\|_{E_\vartheta}\, \rd a&
\le \|b\|_\vartheta \int_0^{a_m} e^{-\lambda a}\|U_{A(t)}(a,0)\|_{\ml(E_0,E_\vartheta)}\, \rd a\,\|\psi_\ve(0)\|_{E_0}\\
&\quad + \|b\|_\vartheta \int_0^{a_m}\|\phi_\ve(\sigma)\|_{E_\vartheta} \int_\sigma^{a_m}e^{-\lambda(a-\sigma)} \|U_{A(t)}(a,\sigma)\|_{\ml(E_\vartheta)}\,\rd a\,\rd\sigma\\
&\le c(\vartheta) \int_0^{a_m} a^{-\vartheta}\,e^{a (\omega_*-\lambda)}\, \rd a\,\|\psi_\ve(0)\|_{E_0}\\
&\quad + c(\vartheta) \int_0^{a_m}\|\phi_\ve(\sigma)\|_{E_\vartheta} \int_\sigma^{a_m} e^{(a-\sigma) (\omega_*-\lambda)}\,\rd a\,\rd\sigma
\end{align*}
with finite right-hand side since $\phi_\ve\in C_c(J,E_\vartheta)\subset L_1(J,E_\vartheta)$ and due to $\vartheta\in (0,1)$ and $\lambda>\omega_*$. A similar computation yields then $\psi_\ve\in \E_1$ since
\begin{align*}
\|\psi_\ve\|_{\E_1}& =\int_0^{a_m}\| \psi_\ve(a)\|_{E_1}\, \rd a\\
&
\le  \int_0^{a_m} e^{-\lambda a}\|U_{A(t)}(a,0)\|_{\ml(E_\vartheta,E_1)}\, \rd a\,\|\psi_\ve(0)\|_{E_\vartheta}\\
&\quad +  \int_0^{a_m}\|\phi_\ve(\sigma)\|_{E_\vartheta} \int_\sigma^{a_m}e^{-\lambda(a-\sigma)} \|U_{A(t)}(a,\sigma)\|_{\ml(E_\vartheta,E_1)}\,\rd a\,\rd\sigma\\
&\le c(\vartheta) \int_0^{a_m} a^{\vartheta-1}\,e^{a (\omega_*-\lambda)}\, \rd a\,\|\psi_\ve(0)\|_{E_\vartheta}\\
&\quad + c(\vartheta) \int_0^{a_m}\|\phi_\ve(\sigma)\|_{E_\vartheta} \int_\sigma^{a_m}(a-\sigma)^{\vartheta-1}\,e^{(a-\sigma) (\omega_*-\lambda)}\,\rd a\,\rd\sigma
\end{align*}
with finite right-hand side since $\psi_\ve(0)\in E_\vartheta$, $\phi_\ve\in C_c(J,E_\vartheta)$, and due to $\vartheta\in (0,1)$ and $\lambda>\omega_*$. This and~\eqref{normA} imply that
$$
\partial_a\psi_\ve= (A(t,\cdot)-\lambda)\psi_\ve+\phi_\ve \in\E_0
$$
and hence  $\psi_\ve\in \Y$. By construction,
$$
\|\psi_\ve-\psi||_{D(\A(t))}\le \|(\lambda-\A(t))^{-1}\|_{\ml(\E_0,D(\A(t)))}\,\|\phi_\ve-\phi||_{\E_0}\le \ve
$$
which yields the density of $\Y$ in $D(\A(t))$. Since the latter is dense in $\E_0$ (as the domain of the generator $\A(t)$), we also have that $\Y$ is dense in $\E_0$.
\end{proof}

We next show that $\mS_t(s)$ maps the space $\Y$ into itself for every $s\ge 0$ and $t\in [0,T]$.

\begin{lem}\label{L1}
Let $t\in [0,T]$. Then $\E_1$ and
$\Y$ are invariant under $\mS_t(s)$ for $s\ge 0$. More precisely,  $\mS_t(s)\vert_{\E_1}\in\ml(\E_1)$ and $\mS_t(s)\vert_{\Y}\in\ml(\Y)$.
\end{lem}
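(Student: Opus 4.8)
The plan is to treat the two assertions separately, first establishing the $\E_1$-invariance and then bootstrapping to $\Y$. Throughout I fix $t\in[0,T]$ and $s\ge0$, write $\psi:=\mS_t(s)\phi$, and split the formula \eqref{100a} into the \emph{transport branch} $\{0\le s<a\}$, where $\psi(a)=U_{A(t)}(a,a-s)\phi(a-s)$, and the \emph{birth branch} $\{s\ge a\}$, where $\psi(a)=U_{A(t)}(a,0)B_\phi^t(s-a)$.

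For the $\E_1$-invariance the transport branch is immediate: since $\|U_{A(t)}(a,a-s)\|_{\ml(E_1)}\le Me^{\varpi s}$ by \eqref{EO}, translation invariance of the integral gives $\int_{\{a>s\}}\|\psi(a)\|_{E_1}\,\rd a\le Me^{\varpi s}\|\phi\|_{\E_1}$. The birth branch is more delicate, because the regularizing bound $\|U_{A(t)}(a,0)\|_{\ml(E_0,E_1)}\le Ma^{-1}e^{\varpi a}$ carries a non-integrable factor $a^{-1}$ near $a=0$. The key step is therefore to show that $B_\phi^t$ itself takes values in $E_1$ whenever $\phi\in\E_1$, i.e. that $[\phi\mapsto B_\phi^t]\in\ml(\E_1,C(\R^+,E_1))$. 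To this end I re-solve the Volterra equation \eqref{500} in the smaller space: the forcing term $s\mapsto\int_s^{a_m}\chi(a)b(a)U_{A(t)}(a,a-s)\phi(a-s)\,\rd a$ lies in $C(\R^+,E_1)$ by \eqref{A2} and \eqref{EO}, while the convolution kernel $a\mapsto\chi(a)b(a)U_{A(t)}(a,0)$ belongs to $L_1(J,\ml(E_1))$ by the same assumptions. A standard Neumann-series argument for Volterra equations with $L_1$-kernel produces a unique solution in $C([0,S],E_1)$ for every $S$, and by the uniqueness asserted in Proposition~\ref{T1} it coincides with $B_\phi^t$. With $B_\phi^t\in C(\R^+,E_1)$ in hand, the birth branch is controlled by $\int_0^{\min(s,a_m)}Me^{\varpi a}\|B_\phi^t(s-a)\|_{E_1}\,\rd a$, which is finite and linear in $\|\phi\|_{\E_1}$; adding the two branches yields $\mS_t(s)\vert_{\E_1}\in\ml(\E_1)$.

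For the $\Y$-invariance I would exploit the semigroup structure rather than differentiate the explicit formula. Since $\Y\subset\dom(\A(t))$ by Lemma~\ref{L1a} and $\A(t)$ generates $(\mS_t(s))_{s\ge0}$, the domain is invariant, so $\psi=\mS_t(s)\phi\in\dom(\A(t))$ for $\phi\in\Y$. By Proposition~\ref{T1}(b) this already supplies the boundary condition $\psi(0)=\int_0^{a_m}b(a)\psi(a)\,\rd a$ and identifies $\psi$ as the mild solution of $\partial_a\psi=A(t,\cdot)\psi+\tilde\phi$ with $\tilde\phi:=-\A(t)\psi=-\mS_t(s)\A(t)\phi\in\E_0$. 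From the first part we also know $\psi\in\E_1$, whence $A(t,\cdot)\psi\in\E_0$ by \eqref{normA}. It remains to upgrade this mild solution to a strong one, i.e. to show $\psi\in W_1^1(J,E_0)$ with $\partial_a\psi=A(t,\cdot)\psi+\tilde\phi$; once this is achieved, $\psi\in\Y$, and the bounds $\|\psi\|_{\E_1}\le C\|\phi\|_{\Y}$ together with $\|\partial_a\psi\|_{\E_0}\le\|A(t)\|_{\ml(\E_1,\E_0)}\|\psi\|_{\E_1}+\|\mS_t(s)\|_{\ml(\E_0)}\|\A(t)\phi\|_{\E_0}$ yield $\mS_t(s)\vert_{\Y}\in\ml(\Y)$.

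I expect this regularity upgrade to be the main obstacle. It cannot be carried out by directly differentiating \eqref{100a}, since differentiating the birth branch would force one to differentiate $B_\phi^t$, hence the non-smooth birth rate $b$. Instead I would use the cocycle identity $\psi(a_2)=U_{A(t)}(a_2,a_1)\psi(a_1)+\int_{a_1}^{a_2}U_{A(t)}(a_2,\sigma)\tilde\phi(\sigma)\,\rd\sigma$ for mild solutions (which follows from the evolution property of $U_{A(t)}$), together with $\psi(a_1)\in E_1$ for a.e. $a_1$ (as $\psi\in\E_1$): at every common Lebesgue point of $\tilde\phi$ and of $a\mapsto\psi(a)$ the difference quotient converges to $A(t,a_1)\psi(a_1)+\tilde\phi(a_1)$, using the classical right-differentiability $\frac{\rd^+}{\rd a}U_{A(t)}(a,a_1)\psi(a_1)\big\vert_{a=a_1}=A(t,a_1)\psi(a_1)$ valid on the regularity subspace $E_1$. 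Since the resulting candidate derivative $A(t,\cdot)\psi+\tilde\phi$ lies in $\E_0$ and $\psi$ is continuous, a standard argument identifies $\psi$ with the absolutely continuous primitive of this function, giving $\psi\in W_1^1(J,E_0)$. The non-integrable $a^{-1}$ singularity in the birth branch (circumvented above by transferring regularity onto $B_\phi^t$) and this mild-to-strong passage are the two points that require genuine care.
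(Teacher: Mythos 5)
Your treatment of the $\E_1$-invariance is correct, and it takes a genuinely different route from the paper: you re-solve the Volterra equation \eqref{500} in $C(\R^+,E_1)$ by a Neumann series (the kernel $\chi\, b\, U_{A(t)}(\cdot,0)$ lies in $L_1(J,\ml(E_1))$ by \eqref{A2} and \eqref{EO}) and then identify this solution with $B_\phi^t$ through the uniqueness statement of Proposition~\ref{T1}. The paper instead bootstraps the regularity of the already-known $E_0$-valued solution $B_\phi^t$ in two steps, first into $E_\vartheta$ via the smoothing estimate $\|U_{A(t)}(a,0)\|_{\ml(E_0,E_\vartheta)}\le Ma^{-\vartheta}e^{\varpi a}$, then into $E_1$ via $\|U_{A(t)}(a,0)\|_{\ml(E_\vartheta,E_1)}\le Ma^{\vartheta-1}e^{\varpi a}$, finishing with Gronwall. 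Your version avoids the intermediate space $E_\vartheta$ entirely; both are legitimate.

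The $\Y$-invariance part, however, contains a genuine gap, and it sits exactly where you anticipated the difficulty: the mild-to-strong upgrade. Knowing that $\psi=\mS_t(s)\phi$ is continuous and that its right difference quotients converge almost everywhere to the integrable function $A(t,\cdot)\psi+\tilde\phi$ does \emph{not} identify $\psi$ with the absolutely continuous primitive of that function. This is precisely the failure mode of the Cantor function, which is continuous, has derivative $0$ almost everywhere (certainly an $L_1$ function), and yet is not constant. There is no ``standard argument'' leading from a.e.\ (right-)differentiability plus integrability of the a.e.\ derivative to membership in $W_1^1(J,E_0)$; one needs either absolute continuity of $\psi$, or a distributional identity $\partial_a\psi=A(t,\cdot)\psi+\tilde\phi$ in $\mathcal{D}'\big((0,a_m),E_0\big)$ (a Ball-type ``mild equals weak'' theorem for nonautonomous parabolic evolution operators), and your argument establishes neither. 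Since this step is the entire content of the passage from $\dom(\A(t))$ to $\Y$, the proof of $\mS_t(s)\vert_\Y\in\ml(\Y)$ is incomplete.

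The premise that pushed you onto this route is also mistaken: direct differentiation of \eqref{100a} is feasible and is exactly what the paper does. Differentiating the birth branch requires $\partial_s B_\psi^t$, not an $a$-derivative of $b$. Indeed, differentiating the Volterra equation \eqref{500} with respect to $s$ (Leibniz rule; the two boundary terms cancel because $B_\psi^t(0)=\psi(0)$ for $\psi\in\Y$, see \eqref{s5}) shows that $\partial_s B_\psi^t$ solves the Volterra equation with datum $\A(t)\psi$, whence $\partial_s B_\psi^t=B^t_{\A(t)\psi}$ by uniqueness, see \eqref{s11}. With this identity both branches of \eqref{100a} differentiate in $a$ to $A(t,\cdot)\,\mS_t(s)\psi-\mS_t(s)\big(\A(t)\psi\big)\in\E_0$, which is \eqref{s8a}. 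The crucial structural point is that the object being differentiated involves the input $\psi\in\Y$, which already lies in $W_1^1(J,E_0)\cap\E_1$; all derivatives are therefore derivatives of explicitly given compositions of the evolution operator with an absolutely continuous, $E_1$-valued-a.e.\ function, and the mild-to-strong problem that blocks your approach never arises.
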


\begin{proof}
Fix $t\in [0,T]$ and recall the definition of $\mS_t$ and $B_\psi^t$ from~\eqref{100}.\\

\noindent{\bf (i)} Let $\psi\in\E_1$. We claim that then $B_\psi^t(s)\in E_1$ for $s\ge 0$. To this end, we use \eqref{EO} (and interpolation) along with~\eqref{A2} to get, for $\tau>0$ and $s\in [0,\tau]$,
\begin{align*}
\int_0^s &\chi(a)\|b(a)\|_{\ml(E_\vartheta)}\, \|U_{A(t)}(a,0)\|_{\ml(E_0,E_\vartheta)}\, \|B_\psi^t(s-a)\|_{E_0}\, \rd a\\
&\quad  + \int_s^{a_m} \chi(a)\| b(a)\|_{\ml(E_\vartheta)}\, \|U_{A(t)}(a,a-s)\|_{\ml(E_\vartheta)}\,\|\psi(a-s)\|_{E_\vartheta}\, \rd a\\
&\le M\,\|b\|_\vartheta\, \|B_\psi^t\|_{C([0,\tau],E_0)} \int_0^s a^{-\vartheta} e^{\varpi a}\, \rd a+ \|b\|_\vartheta \sup_{0\le \sigma\le a<a_m}\|U_{A(t)}(a,\sigma)\|_{\ml(E_\vartheta)}\, \|\psi\|_{L_1(J,E_\vartheta)}
    \end{align*}
and deduce from \eqref{A4} and \eqref{6ax} that the right-hand side of this estimate is finite. This, in turn, implies that the right-hand side of \eqref{500} belongs to $E_\vartheta$ and hence  $B_\psi^t(s)\in E_\vartheta$ with $\|B_\psi^t(s)\|_{E_\vartheta}\le c_\tau$ for $s\in [0,\tau]$ and  some $c_\tau>0$. This then similarly yields
\begin{align*}
\int_0^s & \chi(a)\|b(a)\|_{\ml(E_1)}\, \|U_{A(t)}(a,0)\|_{\ml(E_\vartheta,E_1)}\, \|B_\psi^t(s-a)\|_{E_\vartheta}\, \rd a\\
&\quad  + \int_s^{a_m} \chi(a)\| b(a)\|_{\ml(E_1)}\, \|U_{A(t)}(a,a-s)\|_{\ml(E_1)}\,\|\psi(a-s)\|_{E_1}\, \rd a\\
&\le M\,\|b\|_1\, c_\tau\int_0^s a^{\vartheta-1} e^{\varpi a}\, \rd a+ \|b\|_1 \sup_{0\le \sigma\le a<a_m}\|U_{A(t)}(a,\sigma)\|_{\ml(E_1)}\, \|\psi\|_{\E_1}
    \end{align*}
and we conclude analogously as above from \eqref{500} that indeed  $B_\psi^t(s)\in E_1$ for $s\ge 0$. Based on this we repeat the computation and get
\begin{align*}
\| B_\psi^t(s)\|_{E_1} &\le \int_0^s \chi(a)\|b(a)\|_{\ml(E_1)}\, \|U_{A(t)}(a,0)\|_{\ml(E_1)}\, \|B_\psi^t(s-a)\|_{E_1}\, \rd a\\
&\quad  + \int_s^{a_m} \chi(a)\| b(a)\|_{\ml(E_1)}\, \|U_{A(t)}(a,a-s)\|_{\ml(E_1)}\,\|\psi(a-s)\|_{E_1}\, \rd a\\
&\le \|b\|_1\,\sup_{0\le \sigma\le a<a_m}\|U_{A(t)}(a,\sigma)\|_{\ml(E_1)}\, \left(\int_0^s \|B_\psi^t(a)\|_{E_1}\, \rd a+\|\psi\|_{\E_1}\right)
    \end{align*}
for $s\ge 0$ so that \eqref{EO}, \eqref{A4} and  Gronwall's Inequality imply that
 \begin{equation*}
\| B_\psi^t(s)\|_{E_1}\le \|b\|_1\,\alpha_1\, e^{\|b\|_1\alpha_1 s}\,\|\psi\|_{\E_1}\,,\quad s\ge 0\,,
\end{equation*}
with
$$
\|b\|_1=\|b\|_{L_{\infty}(J,\ml(E_1))}\,,\qquad \alpha_1:=\sup_{0\le \sigma\le a<a_m}\|U_{A(t)}(a,\sigma)\|_{\ml(E_1)}<\infty\,.
$$
This then also entails 
 \begin{equation}\label{s7ab}
\mS_t(s)\psi\in\E_1\,,\quad s\ge 0\,,\quad \psi\in\E_1\,,
    \end{equation}
since similarly
\begin{align}
\| \mS_t(s)\psi\|_{\E_1} &\le \int_0^s  \|U_{A(t)}(a,0)\|_{\ml(E_1)}\, \|B_\psi^t(s-a)\|_{E_1}\, \rd a\nonumber\\
&\qquad  + \int_s^{a_m}  \|U_{A(t)}(a,a-s)\|_{\ml(E_1)}\,\|\psi(a-s)\|_{E_1}\, \rd a\nonumber\\
&\le \alpha_1^2\,\|b\|_1\, \|\psi\|_{\E_1}\int_0^s e^{\|b\|_1\alpha_1 \sigma}\,\rd \sigma +\alpha_1\|\psi\|_{\E_1}=\alpha_1\,e^{\|b\|_1\alpha_1 s}\,\|\psi\|_{\E_1}\,.\label{s7abcx}
    \end{align}
In particular, $\E_1$ is invariant under $\mS_t(s)$ for $s\ge 0$ and $\mS_t(s)\vert_{\E_1}\in\ml(\E_1)$. 
Recalling~\eqref{s7ax} we also deduce
 \begin{equation}\label{s7a}
A(t,\cdot)\mS_t(s)\psi\in\E_0\,,\quad s\ge 0\,,
    \end{equation}
with
 \begin{equation}\label{s7ac}
 \|A(t,\cdot)\mS_t(s)\psi\|_{\E_0}\le \|A(t,\cdot)\|_{L_\infty(J,\ml(E_1,E_0))}\,\alpha_1\,e^{\|b\|_1\alpha_1 s}\,\|\psi\|_{\E_1}\,,
 \quad s\ge 0\,.
    \end{equation}
\vspace{2mm}

\noindent {\bf (ii)} Let now $\psi\in\Y$. Then
\begin{equation}\label{s5}
\psi(0)=\int_0^{a_m}b(a)\, \psi(a)\, \rd a=B_\psi^t(0)\,,\qquad \A(t)\psi=A(t,\cdot)\psi-\partial_a\psi \in \E_0\,.
\end{equation}
Hence, differentiating \eqref{500} with respect to $s\ge 0$ yields
\begin{align}
    \partial_s B_\psi^t(s) &= \chi(s)b(s)U_{A(t)}(s,0) B_\psi^t(0)+\int_0^s \chi(a)\, b(a)\, U_{A(t)}(a,0)\, \partial_s B_\psi^t(s-a)\, \rd
    a \nonumber\\
& \quad -\chi(s)b(s)U_{A(t)}(s,0) \psi(0) + \int_s^{a_m} \chi(a)\, b(a)\, U_{A(t)}(a,a-s)\big[A(t,\cdot)\psi-\partial_a\psi\big](a-s)\, \rd a \nonumber\\
&=\int_0^s \chi(a)\, b(a)\, U_{A(t)}(a,0)\, \partial_s B_\psi^t(s-a)\, \rd
    a \nonumber\\
&\quad+ \int_s^{a_m} \chi(a)\, b(a)\, U_{A(t)}(a,a-s)\,\big(\A(t)\psi\big)(a-s)\, \rd a\,.\label{s11x}
    \end{align}
Therefore, by uniqueness,
\begin{equation}\label{s11}
\partial_s B_\psi^t(s)=B_{\A(t)\psi}^t(s)\,,\quad s\ge 0\,.
\end{equation} 
Note then that the first identity in \eqref{s5} implies $\big[\mS_t(s) \psi\big](s-)=\big[\mS_t(s) \psi\big](s+)$ if $s<a_m$, i.e. there is no jump across the diagonal $s=a$. Let $a\in J$. If $0\le s< a$, then
\begin{align*}
\partial_a\big[\mS_t(s) \psi\big](a)&=A(t,a)\,U_{A(t)}(a,a-s)\, \psi(a-s)\\
&\quad-U_{A(t)}(a,a-s)\, \big[A(t,a-s)\, \psi(a-s)- \partial_a\psi(a-s)\big]\\
&=A(t,a)\big[\mS_t(s) \psi\big](a) -\big[\mS_t(s)\big(\A(t)\psi\big)\big](a)\,.
\end{align*}
If $s\ge a$, then, using \eqref{s11},
\begin{align*}
\partial_a\big[\mS_t(s) \psi\big](a)&= A(t,a)U_{A(t)}(a,0)\, B_\psi^t(s-a)-U_{A(t)}(a,0)\, \partial_s B_\psi^t(s-a)\\
&= A(t,a)U_{A(t)}(a,0)\, B_\psi^t(s-a)-U_{A(t)}(a,0)\, B_{\A(t)\psi}^t(s-a)\\
&=A(t,a)\big[\mS_t(s) \psi\big](a) -\big[\mS_t(s)\big(\A(t)\psi\big)\big](a)\,.
\end{align*}
Gathering the two preceding identities and recalling~\eqref{s7a} and~\eqref{s5} yields
 \begin{equation}\label{s8a}
\partial_a\big[\mS_t(s) \psi\big]=A(t,\cdot) \mS_t(s) \psi -\mS_t(s)\big(\A(t)\psi\big) \in \E_0\,.
    \end{equation}
It now follows from \eqref{s8a}, \eqref{s7ab},  \eqref{6a}, and \eqref{100a} that $\mS_t(s) \psi\in \Y$.
 Consequently, $\Y$ is an invariant subspace of $\mS_t(s)$ for $s\ge 0$ and  \eqref{s8a}, \eqref{s5x}, and \eqref{s7ax} imply that $\mS_t(s)\vert_\Y\in\ml(\Y)$.
\end{proof}
 
We now verify that  $\Y$ is {\it $\A(t)$-admissible} in the sense of \cite[Definition 4.5.3]{Pazy} for $t\in [0,T]$; that is, $\Y$ is an invariant subspace of $\mS_t(s)$ as just shown in Lemma~\ref{L1}, and the restriction $\big(\mS_t(s)\vert_{\Y}\big)_{s\ge 0}$  is a strongly continuous semigroup on $\Y$. Regarding its generator we recall that the part $\A_{\Y}(t)$ of $\A(t)$ in $\Y$ is defined as 
$$
\dom(\A_{\Y}(t)):=\big\{\phi\in\dom(\A(t))\cap\Y\,;\, \A(t)\phi\in \Y\big\} \,,\quad \A_\Y(t)\phi:=\A(t)\phi\,.
$$
With these notations we have:

\begin{cor}\label{C1}
Let $t\in [0,T]$. The restriction  $\big(\mS_t(s)\vert_{\Y}\big)_{s\ge 0}$  is a strongly continuous semigroup on~$\Y$ with generator $\A_{\Y}(t)$. Moreover, $\big(\mS_t(s)\vert_{\E_1}\big)_{s\ge 0}$  is a strongly continuous semigroup on $\E_1$. 
\end{cor}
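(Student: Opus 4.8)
The plan is to check, on each of the two spaces $\E_1$ and $\Y$, the identity $\mS_t(0)=I$, the semigroup law, and strong continuity, and finally to identify the generator of the restriction to $\Y$. The first two properties are inherited directly from $(\mS_t(s))_{s\ge0}$ on $\E_0$ once Lemma~\ref{L1} guarantees invariance of $\E_1$ and $\Y$ together with boundedness of the restrictions; moreover \eqref{s7abcx} yields $\sup_{0\le s\le\tau}\|\mS_t(s)\|_{\ml(\E_1)}<\infty$ for each $\tau>0$, and Lemma~\ref{L1} gives the analogous local bound on $\Y$. For a locally bounded operator semigroup, strong continuity on $[0,\infty)$ is equivalent to right-continuity at $s=0$ (combine the semigroup law with the uniform bound on compact $s$-intervals), so the whole matter reduces to proving $\mS_t(s)\psi\to\psi$ as $s\to0^+$ in $\E_1$ for $\psi\in\E_1$ and in $\Y$ for $\psi\in\Y$.

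For the $\E_1$-continuity it suffices, by the uniform bound above, to argue on the dense subspace $C_c\big((0,a_m),E_1\big)$ of $\E_1$. Fixing such a $\psi$ and using the explicit formula \eqref{100a}, I split $\|\mS_t(s)\psi-\psi\|_{\E_1}$ according to $\{a\le s\}$ and $\{a>s\}$. On $\{a\le s\}$ one has $[\mS_t(s)\psi](a)=U_{A(t)}(a,0)B_\psi^t(s-a)$, and since $\|U_{A(t)}(a,0)\|_{\ml(E_1)}\le\alpha_1:=\sup_{0\le\sigma\le a<a_m}\|U_{A(t)}(a,\sigma)\|_{\ml(E_1)}$ (finite by \eqref{EO} and \eqref{A4}) while $B_\psi^t$ is bounded in $E_1$ on bounded $s$-intervals (as shown within the proof of Lemma~\ref{L1}), this contributes $O(s)$, whereas $\int_0^s\|\psi(a)\|_{E_1}\,\rd a\to0$ by absolute continuity of the integral. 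On $\{a>s\}$ I insert the intermediate term $U_{A(t)}(a,a-s)\psi(a)$ and bound
\begin{align*}
\int_s^{a_m}\big\|U_{A(t)}(a,a-s)\psi(a-s)-\psi(a)\big\|_{E_1}\,\rd a
&\le \alpha_1\int_s^{a_m}\|\psi(a-s)-\psi(a)\|_{E_1}\,\rd a\\
&\quad+\int_s^{a_m}\big\|U_{A(t)}(a,a-s)\psi(a)-\psi(a)\big\|_{E_1}\,\rd a\,.
\end{align*}
The first term tends to $0$ by continuity of translations in $L_1(J,E_1)$. For the second I would invoke dominated convergence: the integrand is dominated by $(\alpha_1+1)\|\psi(a)\|_{E_1}\in L_1$ and, for each fixed $a$, tends to $0$ because $U_{A(t)}(a,a-s)v\to v$ in $E_1$ as $s\to0^+$ for $v\in E_1$. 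This last statement — the strong continuity of the parabolic evolution operator up to the diagonal on the regularity subspace $E_1$ — is the crux of the argument and the step I expect to be the main obstacle; it is \emph{not} implied by the mere $E_1$-boundedness \eqref{EO} and the $E_0$-continuity recorded in Remark~\ref{R1}(a), but is part of the regularity-subspace structure of $U_{A(t)}$ developed in \cite[Section~II.2]{LQPP} (and is immediate in the age-independent case, where $U_{A(t)}(a,\sigma)=e^{(a-\sigma)A(t)}$ and $e^{\tau A(t)}v\to v$ in $E_1=D(A(t))$ for $v\in E_1$).

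The $\Y$-continuity then follows without any further estimate on the evolution operator. For $\psi\in\Y$ the identity \eqref{s8a} reads $\partial_a\mS_t(s)\psi=A(t,\cdot)\mS_t(s)\psi-\mS_t(s)\big(\A(t)\psi\big)$ in $\E_0$. As $s\to0^+$ the first summand converges to $A(t,\cdot)\psi$ in $\E_0$, because $A(t,\cdot)\in\ml(\E_1,\E_0)$ by \eqref{normA} and $\mS_t(s)\psi\to\psi$ in $\E_1$ by the previous step; the second summand converges to $\A(t)\psi$ in $\E_0$, because $\A(t)\psi\in\E_0$ and $(\mS_t(s))_{s\ge0}$ is strongly continuous on $\E_0$. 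Hence $\partial_a\mS_t(s)\psi\to A(t,\cdot)\psi-\A(t)\psi=\partial_a\psi$ in $\E_0$ by \eqref{eq}, and together with $\mS_t(s)\psi\to\psi$ in $\E_1$ this is precisely convergence in the norm \eqref{normY}. This establishes that $(\mS_t(s)\vert_{\E_1})_{s\ge0}$ and $(\mS_t(s)\vert_{\Y})_{s\ge0}$ are strongly continuous semigroups. It finally remains to identify the generator $\tilde\A$ of $(\mS_t(s)\vert_{\Y})_{s\ge0}$ with the part $\A_\Y(t)$, which is the standard argument for admissible subspaces (cf. \cite[Section~4.5]{Pazy}): the continuous embedding $\Y\hookrightarrow\E_0$ shows that any $\Y$-limit of $h^{-1}(\mS_t(h)\psi-\psi)$ is also its $\E_0$-limit, so $\tilde\A\psi=\A(t)\psi\in\Y$ and $\tilde\A\subset\A_\Y(t)$; conversely, for $\psi\in\dom(\A_\Y(t))$ one has $h^{-1}(\mS_t(h)\psi-\psi)=h^{-1}\int_0^h\mS_t(r)\A(t)\psi\,\rd r$, a $\Y$-valued Riemann integral since $r\mapsto\mS_t(r)\A(t)\psi$ is $\Y$-continuous, whose mean converges to $\A(t)\psi$ in $\Y$ as $h\to0^+$, giving $\psi\in\dom(\tilde\A)$ with $\tilde\A\psi=\A(t)\psi$. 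Thus $\tilde\A=\A_\Y(t)$, completing the proof.
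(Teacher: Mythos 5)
Your proof is correct and follows essentially the same route as the paper: invariance and local boundedness from Lemma~\ref{L1}, strong $\E_1$-continuity via a Webb-type translation/dominated-convergence argument whose crux is precisely the strong continuity of $U_{A(t)}$ in $\ml_s(E_1)$ up to the diagonal (which the paper cites from \cite[Theorem~II.4.4.1, Corollary~II.4.4.2]{LQPP}), and then the identity \eqref{s8a} combined with $A(t,\cdot)\in\ml(\E_1,\E_0)$ and $\E_0$-strong continuity to upgrade $\E_1$-convergence to $\Y$-convergence, exactly as in the paper. The only differences are that you write out the $\E_1$-details that the paper delegates to \cite{WebbSpringer} and add the standard identification of the generator with the part $\A_{\Y}(t)$, which the paper leaves implicit; both additions are correct.
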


\begin{proof}
\noindent {\bf (i)} It readily follows from Lemma~\ref{L1} and Proposition~\ref{T1} that $\big(\mS_t(s)\vert_{\E_1}\big)_{s\ge 0}$  is a semigroup on $\E_1$. For the strong continuity
\begin{align}\label{s22}
\|\mS_t(s) \psi - \psi \|_{\E_1}\to 0\ \text{ as }\ s\to 0
\end{align}
for $\psi\in\E_1$ we use that
$$
U_{A(t)}\in C\big(J_\Delta,\ml_s(E_1)\big)\,,\qquad J_\Delta:=\{(a,\sigma)\in J\times J\,;\, \sigma\le a\}\,,
$$ according to \cite[Theorem~II.4.4.1, Corollary~II.4.4.2]{LQPP} and \eqref{A1a}, and prove \eqref{s22} analogously  to the strong continuity in $\E_0$ as in \cite[Theorem 4]{WebbSpringer} (using \eqref{A2} for~\mbox{$\ell=1$}).\\

\noindent {\bf (ii)} It follows from Lemma~\ref{L1} and Proposition~\ref{T1} that $\mS_t(s)\vert_{\Y}\in\ml(\Y)$ for $s\ge 0$ so that we only have to check the strong continuity. To this end, let $\psi\in\Y$. Then
\begin{align}\label{s20}
\|\mS_t(s)\psi-\psi\|_{\Y}&=\|\partial_a\mS_t(s)\psi-\partial_a\psi\|_{\E_0}+\|\mS_t(s)\psi-\psi\|_{\E_1}\,.
\end{align}
For the first term on the right-hand side of \eqref{s20} we use \eqref{s8a}, \eqref{s5}, and \eqref{s7ax} to get
\begin{align}\label{s21}
\|\partial_a\mS_t(s)\psi-\partial_a\psi\|_{\E_0}&\le\|A(t,\cdot) \mS_t(s) \psi -A(t,\cdot) \psi \|_{\E_0}+\| \mS_t(s)(\A(t)\psi)-\A(t)\psi \|_{\E_0}\nonumber\\
&\le\|A(t,\cdot)\|_{L_\infty(J,\ml(E_1,E_0))} \,\|\mS_t(s) \psi - \psi \|_{\E_1}+\| \mS_t(s)(\A(t)\psi)-\A(t)\psi \|_{\E_0}\,.
\end{align}
Since the second term on the right-hand side of \eqref{s21} goes to zero as $s\to 0$  due to the strong continuity of $(\mS_t(s))_{s\ge 0}$ in $\E_0$, it follows from \eqref{s22}, \eqref{s20}, and \eqref{s21} that $\mS_t(s)\psi\to \psi$ in $\Y$ as~$s\to 0$.
\end{proof}

Combining Corollary~\ref{C1} and Lemma~\ref{L1a} we deduce that $\Y$ is dense in $\E_1$.

\begin{cor}\label{C11}
$\Y$ is dense in $\E_1$.
\end{cor}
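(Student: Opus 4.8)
The plan is to exhibit a dense subset of $\E_1$ that is contained in $\Y$, using the semigroup on $\E_1$ furnished by Corollary~\ref{C1}. Fix $t\in[0,T]$. By Corollary~\ref{C1} the restriction $(\mS_t(s)\vert_{\E_1})_{s\ge 0}$ is a strongly continuous semigroup on the Banach space $\E_1$; let $G_1$ denote its infinitesimal generator. Since the domain of the generator of any strongly continuous semigroup is dense in the underlying space, $\dom(G_1)$ is dense in $\E_1$. Hence it suffices to prove the inclusion $\dom(G_1)\subseteq\Y$, for then $\Y$ contains a dense subset of $\E_1$ and the claim follows.

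To establish $\dom(G_1)\subseteq\Y$ I would argue as follows. Let $\psi\in\dom(G_1)$. By definition the difference quotients $s^{-1}(\mS_t(s)\psi-\psi)$ converge in $\E_1$ as $s\to 0^+$; since $\E_1$ is continuously embedded into $\E_0$, they converge to the same limit in $\E_0$. Thus $\psi\in\dom(\A(t))$ with $\A(t)\psi=G_1\psi\in\E_1$. By Proposition~\ref{T1}(b), $\psi$ is then the mild solution of $\partial_a\psi=A(t,\cdot)\psi+\phi$ with $\phi:=-\A(t)\psi\in\E_1\subset\E_0$, and $\psi$ satisfies the birth boundary condition $\psi(0)=\int_0^{a_m}b(a)\,\psi(a)\,\rd a$.

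It remains to upgrade this to $W_1^1$-regularity, which is the main (and essentially only) technical point. Here I would reuse verbatim the concluding step of the proof of Lemma~\ref{L1a}: since $\psi\in\E_1$, estimate~\eqref{normA} gives $A(t,\cdot)\psi\in\E_0$, and therefore $\partial_a\psi=A(t,\cdot)\psi+\phi\in\E_0$. Consequently $\psi\in\E_1\cap W_1^1(J,E_0)$ and, together with the boundary condition already verified, $\psi\in\Y$. This proves $\dom(G_1)\subseteq\Y$, and the density of $\dom(G_1)$ in $\E_1$ then yields that $\Y$ is dense in $\E_1$.

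I expect no genuine obstacle beyond the $W_1^1$-regularity step, which is handled exactly as in Lemma~\ref{L1a}. The only mildly delicate observation is that convergence of the difference quotients in the stronger $\E_1$-norm transfers to $\E_0$, so that on $\dom(G_1)$ the generator $G_1$ acts as the part of $\A(t)$ in $\E_1$; everything else is a direct combination of Corollary~\ref{C1} (the $\E_1$-semigroup) with the characterizations in Proposition~\ref{T1} and Lemma~\ref{L1a}.
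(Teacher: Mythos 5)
Your proof is correct and runs on the same engine as the paper's, namely the strong continuity of $\big(\mS_t(s)\vert_{\E_1}\big)_{s\ge 0}$ on $\E_1$ established in Corollary~\ref{C1}, but the packaging of the concluding step is genuinely different. The paper takes an arbitrary $\phi\in\E_1$, forms the Ces\`aro means $\phi_\tau=\tau^{-1}\int_0^\tau \mS_t(s)\phi\,\rd s$, notes $\phi_\tau\to\phi$ in $\E_1$ with $\phi_\tau\in D(\A(t))$, and then appeals to the density of $\Y$ in $D(\A(t))$ from Lemma~\ref{L1a}. You instead pass to the generator $G_1$ of the $\E_1$-semigroup, use the abstract fact that $\dom(G_1)$ is dense in $\E_1$, and prove the inclusion $\dom(G_1)\subseteq\Y$ directly via Proposition~\ref{T1}~(b) together with the $W_1^1$-upgrade taken from the end of the proof of Lemma~\ref{L1a}. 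Your concluding step is in fact the tighter one: density of $\Y$ in $D(\A(t))$ is density with respect to the graph norm of $\A(t)$, which controls only $\E_0$-distances, so as literally stated it does not convert the $\E_1$-convergence $\phi_\tau\to\phi$ into $\E_1$-approximation of $\phi$ by elements of $\Y$; what really closes the paper's argument is the (implicit) observation that the means $\phi_\tau$ themselves belong to $\Y$ --- and since $\phi_\tau\in\dom(G_1)$, that is precisely your inclusion $\dom(G_1)\subseteq\Y$. Note finally that both arguments lean equally on the unstated step that a mild solution lying in $\E_1$ is actually a $W_1^1(J,E_0)$-solution (the "therefore" in your last step); in your setting this is even slightly cleaner, since your inhomogeneity $\phi=-G_1\psi$ lies in $\E_1$ rather than merely in $\E_0$.
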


\begin{proof}
Let $\phi\in\E_1$ be arbitrary. Since  $\big(\mS_t(s)\vert_{\E_1}\big)_{s\ge 0}$  is a strongly continuous semigroup on $\E_1$ by Corollary~\ref{C1} (for fixed $t$), we have that
$$
\phi_\tau:=\frac{1}{\tau}\int_0^\tau \mS_t(s)\phi\,\rd s\to \phi\ \text{ in }\ \E_1 \ \text{ as }\ \tau\to 0
$$
with $\phi_\tau\in D(\A(t))$. The assertion now follows from the density of $\Y$ in $D(\A(t))$ proven in Lemma~\ref{L1a}.
\end{proof}

\section{Stability Properties}\label{Sec3}

A key ingredient in Kato's construction of an evolution system is the {\it stability} of the family of generators $(\A(t))_{t\in [0,T]}$ (see \cite[Definition 5.2.1]{Pazy}). We  prove this property in~$\E_0$ and in~$\E_1$ which turns out to be sufficient in our particular case. Actually, we  shall employ the notation of stability in the equivalent formulation (see \cite[Theorem 5.2.2]{Pazy}) of the associated family of semigroups $(\mS_t)_{t\in [0,T]}$ as stated in the next proposition.\\

For the remainder of this paper we also assume \eqref{A11}. That is, we impose \eqref{A1},~\eqref{A11}, \eqref{A2}, and~\eqref{A3} to hold from now on.

\begin{prop}\label{P2}
The family
$(\A(t))_{t\in [0,T]}$ is a stable family in $\E_\ell$ for $\ell\in\{0,1\}$ with constants $M_\ell$ and $\omega_\ell+\| b\|_\ell M_\ell$; that is,
\begin{equation}\label{stable0}
\bigg\|\prod_{j=1}^{n}\mS_{t_j}(s_j)\bigg\|_{\ml(\E_\ell)}\le M_\ell \exp\bigg\{\big(\omega_\ell+\| b\|_\ell M_\ell\big)\sum_{j=1}^{n}s_j\bigg\}\,,\quad s_j\ge 0\,,
\end{equation}
for any finite sequence $0\le t_1\le t_2\le\ldots\le t_n\le T$ and $n=1,2,\ldots $. 
\end{prop}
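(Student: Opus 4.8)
The statement to prove is the stability estimate \eqref{stable0} for the family of semigroups $(\mS_t)_{t\in[0,T]}$ in $\E_\ell$, $\ell\in\{0,1\}$. Let me sketch my plan.

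---

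**My approach.**

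The plan is to estimate the product $\prod_{j=1}^n \mS_{t_j}(s_j)\psi$ directly using the explicit representation \eqref{100a} of each $\mS_{t_j}(s_j)$ together with the stability hypothesis \eqref{A11} on the parabolic evolution operators $U_{A(t)}$. The key observation is that \eqref{A11} is stated precisely so that products $\prod_j U_{A(t_j)}(a_j,\sigma_j)$ are controlled by $M_\ell \exp\{\omega_\ell \sum_j (a_j-\sigma_j)\}$; what I need to do is show that iterating the semigroup $\mS_t$ "transfers" this stability to the $\E_\ell$-level, with the extra factor $\|b\|_\ell M_\ell$ appearing because each application of a semigroup feeds into the boundary (birth) term via the Volterra equation \eqref{500}.

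**Key steps, in order.** First I would reduce to a single-semigroup estimate of the right form. Concretely, I expect to prove the bound
$$
\|\mS_t(s)\psi\|_{\E_\ell}\le M_\ell\, e^{(\omega_\ell+\|b\|_\ell M_\ell)s}\,\|\psi\|_{\E_\ell}\,,\qquad s\ge 0\,,
$$
\emph{uniformly in $t\in[0,T]$}, where the constants $M_\ell,\omega_\ell$ come from \eqref{A11} with $n=1$ and are independent of $t$. This is the heart of the matter; once it holds with $t$-uniform constants, the product estimate \eqref{stable0} does \emph{not} follow by naively composing single-step bounds, because that would produce $M_\ell^n$. Instead, the whole point of the stability formulation is that one must track the products of the $U_{A(t_j)}$ jointly. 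So the second step is to unfold the composition $\prod_{j=1}^n\mS_{t_j}(s_j)\psi$ by repeatedly inserting \eqref{100a}: each factor splits into a "transport along characteristics" piece (the first line of \eqref{100a}) and a "boundary/birth" piece (the second line, governed by $B^{t}$). Expanding all $n$ factors yields a sum of terms, each of which is a product of the form $\bigl(\prod_{j} U_{A(t_j)}(\cdot,\cdot)\bigr)$ applied either to $\psi$ or to a nested birth term, with the age-increments $a_j-\sigma_j$ along each branch summing in a way compatible with the exponent $\sum_j s_j$. At this point I would invoke \eqref{A11} on the resulting product of evolution operators to extract the single global factor $M_\ell\,\exp\{\omega_\ell\sum_j(a_j-\sigma_j)\}$, rather than an $n$-fold product of such factors.

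**Third step: absorbing the birth contributions via Gronwall.** The birth terms $B^{t_j}_{(\cdot)}$ contribute the additional $\|b\|_\ell M_\ell$ in the exponent. I would estimate these using \eqref{6a}, the bound $\|b(a)\|_{\ml(E_\ell)}\le\|b\|_\ell$, and the stability \eqref{A11} applied to the evolution operators appearing inside \eqref{500}, and then close the estimate with Gronwall's inequality in the age variable (exactly as in the computations in the proof of Lemma~\ref{L1}, which already established $\E_1$-invariance together with an explicit exponential bound on $\|B^t_\psi\|_{E_\ell}$). The combinatorics of the expansion must be organized so that the characteristic-transport increments and the birth-iteration increments together produce the single exponent $\omega_\ell\sum_j s_j$ from \eqref{A11}, with the $\|b\|_\ell M_\ell\sum_j s_j$ emerging from the Gronwall step. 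For $\ell=1$ one additionally uses the regularizing estimates \eqref{EO} and interpolation (as in part (i) of the proof of Lemma~\ref{L1}) to control the $E_1$-norm of the birth term, which is why $b\in L_\infty(J,\ml(E_1))$ is assumed in \eqref{A2}.

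**The main obstacle.** The hard part is the bookkeeping in the second step: showing that expanding the $n$-fold composition produces products of the $U_{A(t_j)}$ to which \eqref{A11} applies \emph{as a single product}, so that the constant $M_\ell$ appears only once rather than raised to the $n$-th power. This is exactly the subtlety that distinguishes genuine stability of a family from mere uniform boundedness of each member, and it is the reason \eqref{A11} is hypothesized as a product estimate. The correct way to handle it is to mirror Pazy's argument \cite[Theorem~5.2.2]{Pazy}: rather than expanding $\mS_t(s)$ explicitly, one works with the resolvents. By Proposition~\ref{T1}(a) the single-semigroup bound above yields, via the Hille–Yosida estimate, a resolvent bound
$$
\bigl\|(\lambda-\A(t))^{-1}\bigr\|_{\ml(\E_\ell)}\le \frac{M_\ell}{\lambda-(\omega_\ell+\|b\|_\ell M_\ell)}\,,\qquad \lambda>\omega_\ell+\|b\|_\ell M_\ell\,,
$$
and then the equivalence in \cite[Theorem~5.2.2]{Pazy} between stability of $(\A(t))_t$ and uniform resolvent bounds over ordered finite sequences delivers \eqref{stable0} directly, bypassing the explicit product expansion entirely. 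Thus in the write-up I would first establish the $t$-uniform single-semigroup estimate from \eqref{A11} (with $n=1$) plus the Volterra/Gronwall control of $B^t$, then convert it to the uniform resolvent estimate, and finally cite the resolvent characterization of stability to conclude.
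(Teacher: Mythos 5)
Your proposal contains a genuine gap, and it sits exactly at the step you chose to actually carry out. Your first two steps (expand $\prod_{j=1}^n\mS_{t_j}(s_j)\phi$ via \eqref{100a}, apply \eqref{A11} to the resulting products of evolution operators \emph{as single products}, and control the birth terms from \eqref{500} by Gronwall) correctly identify the paper's strategy, and you correctly diagnose the central difficulty: naive composition of one-step bounds produces $M_\ell^n$. But your proposed resolution --- pass to resolvents, establish the $t$-uniform Hille--Yosida bound $\|(\lambda-\A(t))^{-1}\|_{\ml(\E_\ell)}\le M_\ell/(\lambda-\omega_\ell-\|b\|_\ell M_\ell)$, and then ``cite the resolvent characterization of stability in \cite[Theorem~5.2.2]{Pazy}'' --- does not work. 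Stability in \cite[Definition~5.2.1]{Pazy} is itself a \emph{product} condition on resolvents: $\big\|\prod_{j=1}^{k}(\lambda-\A(t_j))^{-1}\big\|\le M(\lambda-\omega)^{-k}$ for all ordered finite sequences $t_1\le\dots\le t_k$. Theorem~5.2.2 of \cite{Pazy} is an equivalence between this product-resolvent bound and the product-semigroup bound; it does \emph{not} derive either from single-operator bounds. A $t$-uniform estimate on each individual resolvent with constant $M_\ell>1$ only yields $M_\ell^k(\lambda-\omega)^{-k}$ for products of resolvents of different $\A(t_j)$'s --- the same $M^n$ obstruction you flagged for semigroups reappears verbatim on the resolvent side (Hille--Yosida controls powers of \emph{one} resolvent, not mixed products). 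So the shortcut is circular: the hypothesis you would need to feed into \cite[Theorem~5.2.2]{Pazy} is exactly as hard as the conclusion \eqref{stable0}, and only in the quasi-contractive case $M_\ell=1$ would your argument close.

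What the paper does is precisely the bookkeeping you hoped to bypass. It establishes, by induction on $n$, an explicit product formula for $\big[\prod_{j=1}^{n}\mS_{t_j}(s_j)\phi\big](a)$: on each age interval determined by the partial sums of the $s_j$, the composition is a single chain of evolution operators $\prod_j U_{A(t_j)}(\cdot,\cdot)$ applied either to $\phi$ or to a birth term $B^{t_k}_{\prod_{i<k}\mS_{t_i}(s_i)\phi}$, and the age increments along each chain telescope so that \eqref{A11} applies once, yielding a single factor $M_\ell$. Simultaneously --- and this is the part your sketch leaves implicit --- the paper proves by the same induction the companion estimate \eqref{stableB} on $B^{t_n}_{\prod_{i=1}^{n-1}\mS_{t_i}(s_i)\phi}(s)$ in $E_\ell$, by writing its Volterra equation \eqref{500}, inserting the product formula at level $n-1$, invoking \eqref{A11} and the inductive hypothesis, and closing with Gronwall's inequality; the exponent $\omega_\ell+\|b\|_\ell M_\ell$ and the single prefactor $M_\ell$ come out of an exact computation of the resulting integrals, not from an abstract equivalence. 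To repair your write-up you must carry out this two-estimate induction (or an equivalent joint bookkeeping); there is no known way to reduce Proposition~\ref{P2} to single-operator resolvent bounds when $M_\ell>1$.
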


\begin{proof}
Let $\phi\in\E_\ell$.
Observe first from \eqref{100} that, for  $0\le t_1\le t_2\le T$, $s_1,s_2\ge 0$, and $a\in J$,
\begin{align*}
     \big[\mS_{t_2}(s_2)&\mS_{t_1}(s_1) \phi\big](a)\, =\, \left\{ \begin{aligned}
    &U_{A(t_2)}(a,a-s_2)\, \big[\mS_{t_1}(s_1)\phi\big](a-s_2)\, ,& &    s_2< a\, ,\\
    & U_{A(t_2)}(a,0)\, B_{\mS_{t_1}(s_1) \phi}^{t_2}(s_2-a)\, ,& &   a\le s_2\, ,
    \end{aligned}
   \right.\\[2mm]
	&=\, \left\{ \begin{aligned}
    &U_{A(t_2)}(a,a-s_2)\, U_{A(t_1)}(a-s_2,a-s_2-s_1)\,\phi(a-s_2-s_1)\, ,& &   s_1+s_2< a\, ,\\
    & U_{A(t_2)}(a,a-s_2)\,U_{A(t_1)}(a-s_2,0)\, B_{\phi}^{t_1}(s_1+s_2-a)\, ,& &   s_2< a\le s_1+ s_2\, ,\\
	& U_{A(t_2)}(a,0)\, B_{\mS_{t_1}(s_1)\phi}^{t_2}(s_2-a)\, ,& &    a\le s_2\,.
    \end{aligned}
 \right.
\end{align*}
In fact, for $n\ge 2$, $0\le t_1\le t_2\le\ldots\le t_n\le T$, $s_j\ge 0$, and $a\in J$ one shows inductively the  product formula
\begin{align*}
    & \bigg[\prod_{j=1}^{n}\mS_{t_j}(s_j) \phi\bigg](a)\\
	&=\, \left\{ \begin{aligned}
    &\prod_{j=1}^{n}U_{A(t_j)}\bigg(a-\sum_{l=j+1}^{n}s_l\,,\,a-\sum_{l=j}^{n}s_l\bigg)\,\phi\bigg(a-\sum_{l=1}^{n}s_l\bigg)\, ,& & \hspace{-3mm}  \sum_{l=1}^{n}s_l< a\, ,\\
    & \prod_{j=k}^{n}U_{A(t_j)}\bigg(a-\sum_{l=j+1}^{n}s_l\,,\,\bigg(a-\sum_{l=j}^{n}s_l\bigg)_+\bigg)\, B_{\prod_{i=1}^{k-1}\mS_{t_i}(s_i) \phi}^{t_k}\bigg(\sum_{l=k}^{n}s_l-a\bigg)\, ,& & \hspace{-3mm}  \sum_{l=k+1}^{n}s_l< a\le \sum_{l=k}^{n}s_l\, ,\\ &&&\quad k=1,\ldots, n\,,
    \end{aligned}
 \right.
\end{align*}
with the understanding that here and in the following
\begin{equation}\label{conv}
\sum_{l=n+1}^n s_l:=0\,,\qquad \prod_{i=1}^{0}\mS_{t_i}(s_i) \phi:=\phi\,.
\end{equation}
We verify \eqref{stable0} by induction with respect to $n\ge 1$ and establish simultaneously that 
\begin{equation}\label{stableB}
\left\| B_{\prod_{i=1}^{n-1}\mS_{t_i}(s_i) \phi}^{t_n}(s)\right\|_{E_\ell}\le \| b\|_\ell\,M_\ell \exp\bigg\{\big(\omega_\ell+\| b\|_\ell M_\ell\big)\bigg(s+\sum_{j=1}^{n-1}s_j\bigg)\bigg\}\,\|\phi\|_{\E_\ell}\,,\quad s\ge 0\,.
\end{equation}
To start with note from \eqref{100}, \eqref{A11}, and Gronwall's Inequality  (see \cite{WalkerIUMJ} or the computation below) that
$$
\|B_\phi^{t_1}(s)\|_{E_\ell}\le \|b\|_\ell\,M_\ell\, e^{(\omega_\ell +\|b\|_\ell M_\ell)s}\,\|\phi\|_{\E_\ell}\,,\quad  s\ge 0\,,
$$
and hence
$$
\|\mS_{t_1}(s_1)\|_{\ml(\E_\ell)}\le M_\ell\, e^{(\omega_\ell +\|b\|_\ell M_\ell)s_1}\,,\quad s_1\ge 0\,.
$$
That is, \eqref{stable0} and \eqref{stableB} hold true for $n=1$. We now  continue by induction for $n\ge 2$ and suppose  \eqref{stable0} and \eqref{stableB} to be true for $n-1$. To shorten notation set
$$
z(s):=B_{\prod_{i=1}^{n-1}\mS_{t_i}(s_i) \phi}^{t_n}(s)\,,\quad s\ge 0\,.
$$
Then, it follows from \eqref{500} and the above product formula for $\prod_{i=1}^{n-1}\mS_{t_i}(s_i) \phi$ that
\begin{align*}
& z(s)=\int_0^{s} \chi(a)\,b(a)\, U_{A(t_n)}(a,0)\, z(s-a)\,\rd a \\
&\qquad\quad +\int_{s}^{a_m} \chi(a)\,b(a)\, U_{A(t_n)}(a,a-s)\, \bigg[\prod_{i=1}^{n-1}\mS_{t_i}(s_i) \phi\bigg] (a-s)\,\rd a\\
&=\int_0^{s} \chi(a)\,b(a)\, U_{A(t_n)}(a,0)\, z(s-a)\,\rd a \\
&\quad+\sum_{k=1}^{n-1}\int_{s+\sum_{l=k+1}^{n-1}s_l}^{s+\sum_{l=k}^{n-1}s_l} \chi(a)\,b(a)\,U_{A(t_n)}(a,a-s)\, \prod_{j=k}^{n-1}U_{A(t_j)}\bigg(a-s-\sum_{l=j+1}^{n-1}s_l\,,\,\bigg(a-s-\sum_{l=j}^{n-1}s_l\bigg)_+\bigg)\\
&\qquad\qquad\qquad\qquad\qquad\qquad\qquad\qquad\qquad \times  B_{\prod_{i=1}^{k-1}\mS_{t_i}(s_i) \phi}^{t_k}\bigg(s+\sum_{l=k}^{n-1}s_l-a\bigg)\,\rd a\\
&\quad +\int_{s+\sum_{l=1}^{n-1}s_l}^{a_m} \chi(a)\,b(a)\, U_{A(t_n)}(a,a-s)\, \prod_{j=1}^{n-1}U_{A(t_j)}\bigg(a-s-\sum_{l=j+1}^{n-1}s_l\,,\,a-s-\sum_{l=j}^{n-1}s_l\bigg)\\
&\qquad\qquad\qquad\qquad\qquad\qquad\qquad\qquad\qquad \times \phi\bigg(a-s-\sum_{l=1}^{n-1}s_l\bigg)\,\rd a\,.
\end{align*}
Taking the norm in $E_\ell$  and using the stability assumption \eqref{A11} (and recalling convention~\eqref{conv})  yields
\begin{align*}
\| z(s)\|_{E_\ell}&
\le \|b\|_\ell\, M_\ell\int_0^{s} e^{\omega_\ell a}\, \| z(s-a)\|_{E_\ell}\,\rd a  \\
&\quad+\|b\|_\ell\, M_\ell \sum_{k=1}^{n-1}\int_{s+\sum_{l=k+1}^{n-1}s_l}^{s+\sum_{l=k}^{n-1}s_l} e^{\omega_\ell a}\, \bigg\| B_{\prod_{i=1}^{k-1}\mS_{t_i}(s_i) \phi}^{t_k}\bigg(s+\sum_{l=k}^{n-1}s_l-a\bigg)\bigg\|_{E_\ell}\,\rd a\\
&\quad +\|b\|_\ell\, M_\ell  \exp\bigg\{\omega_\ell\bigg(s+\sum_{j=1}^{n-1}s_j\bigg)\bigg\}\,\int_{s+\sum_{l=1}^{n-1}s_l}^{a_m}\bigg\| \phi\bigg(a-s-\sum_{l=1}^{n-1}s_l\bigg)\bigg\|_{E_\ell}\,\rd a\,.
\end{align*}
We next invoke \eqref{stableB} for $n-1$ to deduce
\begin{align*}
\| z(s)\|_{E_\ell}&
\le \|b\|_\ell\, M_\ell\int_0^{s} e^{\omega_\ell a}\, \| z(s-a)\|_{E_\ell}\,\rd a  \\
&\quad+\big(\|b\|_\ell\, M_\ell\big)^2 \sum_{k=1}^{n-1}\int_{s+\sum_{l=k+1}^{n-1}s_l}^{s+\sum_{l=k}^{n-1}s_l} e^{\omega_\ell a}\, \exp\bigg\{\big(\omega_\ell+\|b\|_\ell\, M_\ell\big)\bigg(s+\sum_{j=1}^{n-1}s_j-a\bigg)\bigg\}\,\rd a\, \|\phi\|_{\E_\ell}\\
&\quad +\|b\|_\ell\, M_\ell\,  \exp\bigg\{\omega_\ell\bigg(s+\sum_{j=1}^{n-1}s_j\bigg)\bigg\}\, \|\phi\|_{\E_\ell}\,.
\end{align*}
Computing  further we derive
\begin{align*}
\| z(s)\|_{E_\ell}&\le  \|b\|_\ell\, M_\ell\int_0^{s} e^{\omega_\ell a}\, \| z(s-a)\|_{E_\ell}\,\rd a  \\
&\quad+\big(\|b\|_\ell\, M_\ell\big)^2\,\exp\bigg\{\big(\omega_\ell+\|b\|_\ell M_\ell\big)\bigg(s+\sum_{j=1}^{n-1}s_j\bigg)\bigg\} \int_{s}^{s+\sum_{l=1}^{n-1}s_l} e^{-\|b\|_\ell M_\ell a}\, \rd a\, \|\phi\|_{\E_\ell}\\
&\quad +\|b\|_\ell\, M_\ell\,  \exp\bigg\{\omega_\ell\bigg(s+\sum_{j=1}^{n-1}s_j\bigg)\bigg\}\, \|\phi\|_{\E_\ell}\\
&= \|b\|_\ell\, M_\ell\int_0^{s} e^{\omega_\ell (s-a)}\, \| z(a)\|_{E_\ell}\,\rd a  \\
&\quad-\|b\|_\ell\, M_\ell\,\exp\bigg\{\big(\omega_\ell+\|b\|_\ell M_\ell\big)\bigg(s+\sum_{j=1}^{n-1}s_j\bigg)\bigg\} \, \bigg(e^{-\|b\|_\ell M_\ell a}\bigg\vert_{a=s}^{a=s+\sum_{l=1}^{n-1}s_l}\bigg)
\, \|\phi\|_{\E_\ell}\\
&\quad +\|b\|_\ell\, M_\ell\,  \exp\bigg\{\omega_\ell\bigg(s+\sum_{j=1}^{n-1}s_j\bigg)\bigg\}\, \|\phi\|_{\E_\ell}\,.
\end{align*}
Therefore, simplifying the last two terms we get
\begin{align*}
\| z(s)\|_{E_\ell}&
\le  \|b\|_\ell\, M_\ell\int_0^{s} e^{\omega_\ell (s-a)}\, \| z(a)\|_{E_\ell}\,\rd a  \\ 
&\quad
+\|b\|_\ell\, M_\ell\, e^{\omega_\ell s}\,\exp\bigg\{\big(\omega_\ell+\|b\|_\ell M_\ell\big)\sum_{j=1}^{n-1}s_j\bigg\} \,  \|\phi\|_{\E_\ell}
\end{align*}
for $s\ge 0$ and thus infer from Gronwall's Inequality that indeed
\begin{align*}
\left\| B_{\prod_{i=1}^{n-1}\mS_{t_i}(s_i) \phi}^{t_n}(s)\right\|_{E_\ell}& = \| z(s)\|_{E_\ell}\\
&\le \| b\|_\ell\,M_\ell \exp\bigg\{\big(\omega_\ell+\| b\|_\ell M_\ell\big)\bigg(s+\sum_{j=1}^{n-1}s_j\bigg)\bigg\}\,\|\phi\|_{\E_\ell}
\end{align*}
for $s\ge 0$ as claimed in~\eqref{stableB}. Next, we turn to~\eqref{stable0} still assuming it to be true for $n-1$. The above product formula for $\prod_{j=1}^{n}\mS_{t_j}(s_j) \phi$  yields
\begin{align*}
 \bigg\|\prod_{j=1}^{n}&\mS_{t_j}(s_j)\phi\bigg\|_{\E_\ell}
=\int_0^{a_m}\bigg\|\bigg[\prod_{j=1}^{n}\mS_{t_j}(s_j) \phi\bigg](a)\bigg\|_{E_\ell}\,\rd a\\
& \le\sum_{k=1}^{n}\int_{\sum_{l=k+1}^{n}s_l}^{\sum_{l=k}^{n}s_l}\bigg\| \prod_{j=k}^{n}U_{A(t_j)}\bigg(a-\sum_{l=j+1}^{n}s_l\,,\,\bigg(a-\sum_{l=j}^{n}s_l\bigg)_+\bigg)\bigg\|_{\ml(E_\ell)}\\
&\qquad\qquad\qquad\qquad\qquad\qquad\qquad\qquad\qquad \times   \bigg\|B_{\prod_{i=1}^{k-1}\mS_{t_i}(s_i) \phi}^{t_k}\bigg(\sum_{l=k}^{n}s_l-a\bigg)\bigg\|_{E_\ell}\,\rd a\\
&\quad +\int_{\sum_{l=1}^{n}s_l}^{a_m}\bigg\| \prod_{j=1}^{n}U_{A(t_j)}\bigg(a-\sum_{l=j+1}^{n}s_l\,,\,a-\sum_{l=j}^{n}s_l\bigg)\bigg\|_{\ml(E_\ell)}\,\bigg\|\phi\bigg(a-\sum_{l=1}^{n}s_l\bigg)\bigg\|_{E_\ell}\,\rd a\\
\end{align*}
if $\sum_{l=1}^{n}s_l<a_m$ (otherwise the last integral vanishes). We now first use~\eqref{A11} and then~\eqref{stableB} to get
\begin{align*}
\bigg\|\prod_{j=1}^{n}\mS_{t_j}(s_j)\phi\bigg\|_{\E_\ell}
&\le  M_\ell\, \sum_{k=1}^{n}\int_{\sum_{l=k+1}^{n}s_l}^{\sum_{l=k}^{n}s_l} e^{\omega_\ell a}  \, \bigg\|B_{\prod_{i=1}^{k-1}\mS_{t_i}(s_i) \phi}^{t_k}\bigg(\sum_{l=k}^{n}s_l-a\bigg)\bigg\|_{E_\ell}\,\rd a\\
&\qquad +M_\ell\,\exp\bigg\{\omega_\ell \sum_{l=1}^{n}s_l\bigg \}\,\|\phi\|_{\E_\ell}\\
&\le \|b\|_\ell\, M_\ell^2\, \sum_{k=1}^{n}\int_{\sum_{l=k+1}^{n}s_l}^{\sum_{l=k}^{n}s_l} e^{\omega_\ell a}  \, \exp\bigg\{\big(\omega_\ell+\|b\|_\ell M_\ell\big)\bigg(\sum_{i=1}^{n}s_i-a\bigg)\bigg\}\,\rd a\,\|\phi\|_{\E_\ell}\\
&\qquad +M_\ell\,\exp\bigg\{\omega_\ell \sum_{l=1}^{n}s_l\bigg \}\,\|\phi\|_{\E_\ell}\\
&= \|b\|_\ell\, M_\ell^2\, \exp\bigg\{\big(\omega_\ell+\|b\|_\ell M_\ell\big)\sum_{i=1}^{n}s_i\bigg\} \,\int_{0}^{\sum_{l=1}^{n}s_l} e^{-\|b\|_\ell\, M_\ell a} \,\rd a\,\|\phi\|_{\E_\ell}\\
&\qquad +M_\ell\,\exp\bigg\{\omega_\ell \sum_{l=1}^{n}s_l\bigg \}\,\|\phi\|_{\E_\ell}\\
&\le  M_\ell\, \exp\bigg\{\big(\omega_\ell+\|b\|_\ell M_\ell\big)\sum_{i=1}^{n}s_i\bigg\} \,\|\phi\|_{\E_\ell}\,.
\end{align*}
This is \eqref{stable0} and the proof of Proposition~\ref{P2} is complete.
\end{proof}


For the additional information stated in Corollary~\ref{C00} we shall also prove a similar estimate in~$L_p(J,E_\ell)$.

\begin{cor}\label{CP2}
Let $\ell\in\{0,1\}$. Given $p\in (1,\infty)$ there are $N\ge 1$ and $\xi\in\R$ (depending on $M_\ell$, $\omega_\ell$, $b$, and $p$)  such that 
\begin{equation}\label{stable0n}
\bigg\|\prod_{j=1}^{n}\mS_{t_j}(s_j)\phi\bigg\|_{L_p(J,E_\ell)}\le N \exp\bigg\{\xi \sum_{j=1}^{n}s_j\bigg\}\,\max\big\{\|\phi\|_{L_p(J,E_\ell)},\|\phi\|_{\E_\ell}\big\}\,,\quad s_j\ge 0\,,
\end{equation}
for every $\phi\in \E_\ell\cap L_p(J,E_\ell)$ and  any finite sequence $0\le t_1\le t_2\le\ldots\le t_n\le T$ and $n=1,2,\ldots $. 
\end{cor}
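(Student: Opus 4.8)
The plan is to mirror the proof of Proposition~\ref{P2} almost verbatim, running an induction on $n$ that simultaneously controls the $L_p(J,E_\ell)$-norm of the product $\prod_{j=1}^n \mS_{t_j}(s_j)\phi$ and the $E_\ell$-norm of the associated Volterra kernels $B^{t_n}_{\prod_{i=1}^{n-1}\mS_{t_i}(s_i)\phi}$. The crucial point is that the birth terms $B^{t_k}_{\cdots}(s)$ are still estimated pointwise in $E_\ell$ exactly as in~\eqref{stableB}, so I would \emph{reuse} the bound~\eqref{stableB} already established in Proposition~\ref{P2} rather than reprove it. Thus the new content is only the $L_p$-estimate of the product formula itself; the kernel estimate~\eqref{stableB} enters as an input, and it is this estimate that accounts for why the $\E_\ell=L_1(J,E_\ell)$-norm of $\phi$ (and not just its $L_p$-norm) appears on the right-hand side of~\eqref{stable0n}.

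First I would record the base case $n=1$. From the explicit formula~\eqref{100a}, applying the regularizing/stability bound~\eqref{A11} (with $M_\ell,\omega_\ell$) to $U_{A(t_1)}$ and the already-known kernel bound $\|B^{t_1}_\phi(s)\|_{E_\ell}\le \|b\|_\ell M_\ell e^{(\omega_\ell+\|b\|_\ell M_\ell)s}\|\phi\|_{\E_\ell}$, the part of $\mS_{t_1}(s_1)\phi$ supported on $\{s_1<a\}$ is a pointwise shift of $\phi$ premultiplied by an operator of norm $\le M_\ell e^{\omega_\ell s_1}$, whose $L_p$-norm is therefore $\le M_\ell e^{\omega_\ell s_1}\|\phi\|_{L_p(J,E_\ell)}$; the part supported on $\{a\le s_1\}$ is $U_{A(t_1)}(a,0)B^{t_1}_\phi(s_1-a)$, whose $E_\ell$-norm is bounded uniformly in $a$ by a constant times $e^{(\omega_\ell+\|b\|_\ell M_\ell)s_1}\|\phi\|_{\E_\ell}$, so (using $a_m<\infty$, or more generally H\"older on a finite-measure piece) its $L_p$-norm over that region is controlled by $e^{(\omega_\ell+\|b\|_\ell M_\ell)s_1}\|\phi\|_{\E_\ell}$. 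Taking a maximum over the two contributions gives~\eqref{stable0n} for $n=1$ with suitable $N$ and $\xi$.

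For the inductive step I would split $\|\prod_{j=1}^n\mS_{t_j}(s_j)\phi\|_{L_p(J,E_\ell)}^p$ according to the same case distinction as in the product formula of Proposition~\ref{P2}: the region $\{\sum_{l=1}^n s_l<a\}$ where the value is a pure pointwise shift of $\phi$ premultiplied by the product $\prod_{j=1}^n U_{A(t_j)}(\cdots)$, and the regions $\{\sum_{l=k+1}^n s_l<a\le \sum_{l=k}^n s_l\}$ where the value is $\prod_{j=k}^n U_{A(t_j)}(\cdots)\,B^{t_k}_{\prod_{i=1}^{k-1}\mS_{t_i}(s_i)\phi}(\cdots)$. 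On the shift region, bounding the operator product by~\eqref{A11} and changing variables $a\mapsto a-\sum_l s_l$ shows this term contributes $\le M_\ell e^{\omega_\ell\sum s_l}\|\phi\|_{L_p(J,E_\ell)}$; this is exactly the mechanism that keeps the $L_p$-norm of $\phi$ (rather than its $L_1$-norm) on the right. On each birth region I would bound the operator product by~\eqref{A11} and the kernel by~\eqref{stableB}, so that the $E_\ell$-norm of the integrand is pointwise dominated by $\|b\|_\ell M_\ell^2\, e^{(\omega_\ell+\|b\|_\ell M_\ell)\sum s_l}e^{-\|b\|_\ell M_\ell a}\|\phi\|_{\E_\ell}$; since these regions have total length $\sum_l s_l$, the $L_p$-norm over their union is again of the form $(\text{const})\,e^{\xi\sum s_l}\|\phi\|_{\E_\ell}$, using finiteness of $a_m$ (or $\omega_\ell<0$ when $a_m=\infty$) to absorb the exponential weight. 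Combining the two contributions and taking the maximum of $\|\phi\|_{L_p(J,E_\ell)}$ and $\|\phi\|_{\E_\ell}$ closes the induction.

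I expect the main obstacle to be bookkeeping rather than conceptual: one must verify that the $L_p$-integration interacts correctly with the birth regions. The subtlety is that $B^{t_k}_{\cdots}$ is controlled only in $E_\ell$ \emph{uniformly in its argument}, so on the birth regions one cannot hope for a genuine $L_p$-gain from $\phi$ and is forced to use the $\E_\ell=L_1$-norm of $\phi$; making the constants $N,\xi$ depend on $p$ (through the length $a_m$ and the exponential factors) but not on $n$ requires that the geometric series/Gronwall-type summation over $k=1,\dots,n$ telescopes into a single exponential $e^{\xi\sum_j s_j}$, exactly as in the final simplification of Proposition~\ref{P2}. A secondary technical point is the case $a_m=\infty$, where one must lean on~\eqref{A4} ($\omega_\ell<0$) to guarantee the relevant $a$-integrals converge; for $a_m<\infty$ (the setting of Corollary~\ref{C00}, where this estimate is actually used) this issue disappears and H\"older's inequality on the finite interval handles the passage between $L_1$ and $L_p$ cleanly.
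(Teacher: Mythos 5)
Your proposal is correct and takes essentially the same route as the paper: the paper's proof of Corollary~\ref{CP2} likewise takes the product formula and the kernel bound~\eqref{stableB} from Proposition~\ref{P2} as ready-made inputs, bounds the operator products by~\eqref{A11}, and lets the birth regions produce the $\|\phi\|_{\E_\ell}$ term while the shift region produces the $\|\phi\|_{L_p(J,E_\ell)}$ term. The only cosmetic difference is that your ``induction on $n$'' is vacuous---since \eqref{stableB} is reused, your inductive step never invokes the inductive hypothesis, and the paper accordingly presents the argument as a direct computation; note also that the integral $\int_0^{\sum_l s_l}e^{-p\|b\|_\ell M_\ell a}\,\rd a$ converges by itself, so no appeal to $a_m<\infty$ or to $\omega_\ell<0$ is needed for the birth-region contribution.
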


\begin{proof}
Let $\phi\in\E_\ell\cap L_p(J,E_\ell)$.  The product formula for $\prod_{j=1}^{n}\mS_{t_j}(s_j) \phi$  entails
\begin{align*}
 \bigg\|\prod_{j=1}^{n}&\mS_{t_j}(s_j)\phi\bigg\|_{L_p(J,E_\ell)}^p
=\int_0^{a_m}\bigg\|\bigg[\prod_{j=1}^{n}\mS_{t_j}(s_j) \phi\bigg](a)\bigg\|_{E_\ell}^p\,\rd a\\
& \le\sum_{k=1}^{n}\int_{\sum_{l=k+1}^{n}s_l}^{\sum_{l=k}^{n}s_l}\bigg\| \prod_{j=k}^{n}U_{A(t_j)}\bigg(a-\sum_{l=j+1}^{n}s_l\,,\,\bigg(a-\sum_{l=j}^{n}s_l\bigg)_+\bigg)\bigg\|_{\ml(E_\ell)}^p\\
&\qquad\qquad\qquad\qquad\qquad\qquad\qquad\qquad\qquad \times   \bigg\|B_{\prod_{i=1}^{k-1}\mS_{t_i}(s_i) \phi}^{t_k}\bigg(\sum_{l=k}^{n}s_l-a\bigg)\bigg\|_{E_\ell}^p\,\rd a\\
&\quad +\int_{\sum_{l=1}^{n}s_l}^{a_m}\bigg\| \prod_{j=1}^{n}U_{A(t_j)}\bigg(a-\sum_{l=j+1}^{n}s_l\,,\,a-\sum_{l=j}^{n}s_l\bigg)\bigg\|_{\ml(E_\ell)}^p\,\bigg\|\phi\bigg(a-\sum_{l=1}^{n}s_l\bigg)\bigg\|_{E_\ell}^p\,\rd a\\
\end{align*}
if $\sum_{l=1}^{n}s_l<a_m$, otherwise the last integral vanishes. Using~\eqref{A11} and~\eqref{stableB} we get as before
\begin{align*}
\bigg\|\prod_{j=1}^{n}\mS_{t_j}(s_j)\phi\bigg\|_{L_p(J,E_\ell)}^p
&\le  M_\ell^p\, \sum_{k=1}^{n}\int_{\sum_{l=k+1}^{n}s_l}^{\sum_{l=k}^{n}s_l} e^{p\omega_\ell a}  \, \bigg\|B_{\prod_{i=1}^{k-1}\mS_{t_i}(s_i) \phi}^{t_k}\bigg(\sum_{l=k}^{n}s_l-a\bigg)\bigg\|_{E_\ell}^p\,\rd a\\
&\qquad +M_\ell^p\,\exp\bigg\{p\omega_\ell \sum_{l=1}^{n}s_l\bigg \}\,\|\phi\|_{L_p(J,E_\ell)}^p\\
%
%
&\le \|b\|_\ell^p\, M_\ell^{2p}\, \exp\bigg\{p\big(\omega_\ell+\|b\|_\ell M_\ell\big)\sum_{i=1}^{n}s_i\bigg\} \,\int_{0}^{\sum_{l=1}^{n}s_l} e^{-p\|b\|_\ell\, M_\ell a} \,\rd a\,\|\phi\|_{\E_\ell}^p\\
&\qquad +M_\ell^p\,\exp\bigg\{p\omega_\ell \sum_{l=1}^{n}s_l\bigg \}\,\|\phi\|_{L_p(J,E_\ell)}^p\\
&= \frac{1}{p}\,\|b\|_\ell^{p-1}\, M_\ell^{2p-1}\, \exp\bigg\{p\big(\omega_\ell+\|b\|_\ell M_\ell\big)\sum_{i=1}^{n}s_i\bigg\}  \,\|\phi\|_{\E_\ell}^p\\
&\qquad -\frac{1}{p}\,\|b\|_\ell^{p-1}\, M_\ell^{2p-1} \,\,\exp\bigg\{p\omega_\ell \sum_{l=1}^{n}s_l\bigg \}\,\|\phi\|_{\E_\ell}^p\\
&\qquad +M_\ell^p\,\exp\bigg\{p\omega_\ell \sum_{l=1}^{n}s_l\bigg \}\,\|\phi\|_{L_p(J,E_\ell)}^p\,.
\end{align*}
This yields the claim.
\end{proof}

\section{Proof of Theorem~\ref{T2}} \label{Sec4}

Let us recall that we have verified in the previous sections under assumptions \eqref{A1},   \eqref{A11}, \eqref{A2}, and \eqref{A3} that $(\A(t))_{t\in [0,T]}$ is a stable family in $\E_0$ and in $\E_1$ (see Proposition~\ref{P2}), that $\Y$ is $\A(t)$-admissible (see Corollary~\ref{C1}), and that $\A\in C([0,T],\ml(\Y,\E_0))$ (see Lemma~\ref{L1a}). That is, we have verified all assumptions $(H_1)-(H_3)$ from \cite[Theorem 5.3.1]{Pazy} except for the stability of the family $(\A_\Y(t))_{t\in [0,T]}$  in $\Y$ which is fundamental for the construction of the evolution system (and part of assumption $(H_2)$ in \cite[Theorem 5.3.1]{Pazy}). As pointed out in the introduction,  $(\A_\Y(t))_{t\in [0,T]}$ need not be stable in $\Y$ in general. Yet, the proof of \cite[Theorem 5.3.1]{Pazy} still works in our case almost verbatim, where the assumption on the stability of the family $(\A_\Y(t))_{t\in [0,T]}$  in $\Y$ is replaced by the previously established stability in $\E_1$ together with the simple, but crucial, observation from \eqref{eq} that
\begin{equation}\label{key}
\big(\A(t_1)-\A(t_2)\big)\psi=\big(A(t_1,\cdot)-A(t_2,\cdot)\big)\psi\,,\quad \psi\in\Y\,,\quad t_1,t_2\in [0,T]\,,
\end{equation}
where the right-hand side is meaningful even if only $\psi\in\E_1$. 
For the sake of completeness and since we rely on the construction when proving additional properties later on, we provide the detailed construction of the evolution system but emphasize that it follows very much along the lines of the proof of \cite[Theorem~5.3.1]{Pazy}.

\subsection*{Construction of the Evolution System} 
Given $n\ge 1$ set $t_k^n:=(k/n)T$ for  $k=0,\ldots,n$ and let
\begin{equation}\label{3.3}
\A_n(t):=\A(t_k^n)\,,\quad t_k^n\le t <t_{k+1}^n\,,\quad k=0,\ldots,n-1\,,\qquad
\A_n(T):=\A(T)\,.
\end{equation}
Recall the notation $A(t)=A(t,\cdot)$ and set analogously
\begin{equation*} 
A_n(t):=A(t_k^n)\,,\quad t_k^n\le t <t_{k+1}^n\,,\quad k=0,\ldots,n-1\,,\qquad
A_n(T):=A(T)\,.
\end{equation*}
Since $A\in C\big([0,T],\ml(\E_1,\E_0)\big)$ by~\eqref{contA}, this definition yields
\begin{equation}\label{3.4}
\|A(t)-A_n(t)\|_{\ml(\E_1,\E_0)}\to 0\ \text{ as $n\to\infty$\,, uniformly in $t\in[0,T]$}\,.
\end{equation}
We then define the family of operators $(\U_n(t,s))_{0\le s\le t\le T}$ in $\ml(\E_0)\cap \ml(\E_1)$ by
\begin{equation}\label{3.5}
\begin{split}
 \U_n(t,s):=&\left\{\begin{array}{ll}
\mS_{t_j^n}(t-s)\,, & t_j^n\le s\le t\le t_{j+1}^n\,,\\
\mS_{t_k^n}\big(t-t_k^n\big)\,\Big[\prod_{j=l+1}^{k-1}\mS_{t_j^n}\left(\frac{T}{n}\right)\Big]\, \mS_{t_l^n}\big(t_{l+1}^n-s\big)\,, & k>l\,,\  t_k^n\le t\le  t_{k+1}^n\,,
\end{array}\right. \\
&\qquad \qquad \qquad \qquad\qquad \qquad \qquad\qquad \qquad \qquad \qquad\qquad \qquad t_l^n\le  s\le t_{l+1}^n\,.
\end{split}
\end{equation}
Then $(\U_n(t,s))_{0\le s\le t\le T}$ is an evolution system satisfying
\begin{equation}\label{3.6}
\U_n(s,s)=I\,,\qquad \U_n(t,s)=\U_n(t,r)\U_n(r,s)\,,\quad 0\le s\le r\le t\le T\,,
\end{equation}
and
\begin{equation}\label{3.7}
(t,s)\mapsto\U_n(t,s)\ \text{ is strongly continuous in $\E_0$ on }\ 0\le s\le t\le T\,.
\end{equation}
Moreover, it follows from Proposition~\ref{P2} that
\begin{align}\label{3.8}
\|\U_n(t,s)\|_{\ml(\E_\ell)}\le M_\ell e^{(\omega_\ell+M_\ell\|b\|_\ell)(t-s)}\,,\quad 0\le s\le t\le T\,,\quad \ell=0,1\,,
\end{align}
while Lemma~\ref{L1} ensures
\begin{align}\label{3.9}
\U_n(t,s)\Y\subset\Y\,,\quad 0\le s\le t\le T\,.
\end{align}
Since $\Y\subset D(\A(t))$ for $t\in [0,T]$ by Lemma~\ref{L1a}, the definition of $\U_n(t,s)$ shows for $\psi\in \Y$ that
\begin{align}
&\frac{\partial}{\partial t}\U_n(t,s)\psi =\A_n(t)\U_n(t,s)\psi\,,\qquad  t\not= t_j^n\,,\quad  j=0,1,\ldots , n\,,\label{3.10}\\
&\frac{\partial}{\partial s}\U_n(t,s)\psi=-\U_n(t,s)\A_n(s)\phi\,,\qquad  s\not=t_j^n\,,\quad j=0,1,\ldots , n\,.\label{3.11}
\end{align}
Fix $\psi\in\Y$ and consider $m,n\ge 1$ and $0\le s\le t\le T$. Then \eqref{3.10} and \eqref{3.11} imply that the map $r\mapsto \U_n(t,r)\U_m(r,s)\psi$ is differentiable with respect to $r\in [s,t]$ except for a finite number of values and, together with~\eqref{3.6}, entail that
\begin{align}
\U_n(t,s) \psi- \U_m(t,s)\psi&=\int_s^t \U_n(t,r)\,\big(\A_n(r)-\A_m(r)\big)\,\U_m(r,s)\psi\,\rd r\nonumber\\
&=\int_s^t \U_n(t,r)\,\big(A_n(r)-A_m(r)\big)\,\U_m(r,s)\psi\,\rd r \,,\label{3.12}
\end{align}
where we employ~\eqref{key} and~\eqref{3.9} for the second equality. Therefore, setting 
$$
\eta:=\max\{\omega_0+M_0\|b\|_0\,,\,\omega_1+M_1\|b\|_1\}
$$ 
we infer from~\eqref{3.12} and \eqref{3.8}  that
\begin{center}
\begin{align}
\|\U_n(t,s)& \psi- \U_m (t,s)\psi\|_{\E_0}\nonumber\\
&\le \int_s^t \|\U_n(t,r)\|_{\ml(\E_0)}\,\|A_n(r)-A_m(r)\|_{\ml(\E_1,\E_0)}\,\|\U_m(r,s)\|_{\ml(\E_1)}\,\|\psi\|_{\E_1} \,\rd r\nonumber\\
&\le M_0\,M_1\, e^{\eta(t-s)}\,\|\psi\|_{\E_1} \int_s^t \|A_n(r)-A_m(r)\|_{\ml(\E_1,\E_0)} \,\rd r\,.\label{3.13}
\end{align}
\end{center}
Thus, it follows from~\eqref{3.4} that $\U_n(t,s)\psi$ converges in $\E_0$, uniformly on $0\le s\le t\le T$, as $n\to\infty$. As  $\Y$ is dense in $\E_0$ by Lemma~\ref{L1a}, this convergence of
$\U_n(t,s)\psi$ together with~\eqref{3.8} imply that $\U_n(t,s)\psi$ converges  in $\E_0$ for every $\psi\in\E_0$, uniformly on $0\le s\le t\le T$, as $n\to\infty$.
Define now
\begin{align}\label{3.15}
\U_{\A}(t,s) \psi:=\lim_{n\to\infty}\U_n(t,s) \psi \,,\quad \psi\in\E_0\,,\quad 0\le s\le t\le T\,.
\end{align}
From~\eqref{3.6} and~\eqref{3.7} we see that $(\U_{\A}(t,s))_{0\le s\le t\le T}$ is an evolution system in $\E_0$ satisfying~\eqref{ES1} and~\eqref{ES2} while~\eqref{3.8} yields~\eqref{E1}.

\subsection*{Differentiability Properties} The proof of~\eqref{E2} and~\eqref{E3} is now exactly the same as in \cite[Theorem~5.3.1]{Pazy} again relying on~\eqref{key}: Fix $\psi\in\Y$, $n\ge 1$, $0\le s\le t\le T$ and $\tau\in [0,T]$. Then~\eqref{3.10} and~\eqref{3.11} imply that the map $r\mapsto \U_n(t,r)\mS_\tau(r-s)\psi$ is differentiable with respect to $r\in [s,t]$ except for a finite number of values and, together with~\eqref{3.6} and \eqref{key}, this entails that
\begin{align*}
\U_n(t,s) \psi- \mS_\tau(t-s)\psi&=\int_s^t \U_n(t,r)\,\big(\A_n(r)-\A(\tau)\big)\,\mS_\tau(r-s)\psi\,\rd r\nonumber\\
&=\int_s^t \U_n(t,r)\,\big(A_n(r)-A(\tau)\big)\,\mS_\tau(r-s)\psi\,\rd r 
\end{align*}
and therefore, using~\eqref{3.8},
\begin{align*} 
\|\U_n(t,s) \psi- \mS_\tau(t-s)\psi\|_{\E_0}
&\le M_0\,M_1\, e^{\eta(t-s)}\,\|\psi\|_{\E_1} \int_s^t \|A_n(r)-A(\tau)\|_{\ml(\E_1,\E_0)} \,\rd r\,.
\end{align*}
Passing to the limit as $n\to \infty$ gives
\begin{align}\label{3.17}
\|\U_{\A}(t,s) \psi- \mS_\tau(t-s)\psi\|_{\E_0}
&\le M_0\,M_1\, e^{\eta(t-s)}\,\|\psi\|_{\E_1} \int_s^t \|A(r)-A(\tau)\|_{\ml(\E_1,\E_0)} \,\rd r\,.
\end{align}
Taking $\tau=s$ in~\eqref{3.17} and using~\eqref{3.4} yields
\begin{align}\label{3.18}
\limsup_{t\searrow s}\frac{1}{t-s}\|\U_{\A}(t,s) \psi- \mS_s(t-s)\psi\|_{\E_0}=0
\end{align}
and the differentiability of $\mS_s(t-s)\psi$ from the right at $t=s$ then shows that also $\U_{\A}(t,s) \psi$ is differentiable from the right at $t=s$ with the same derivative. This entails~\eqref{E2}. Similarly, taking $\tau=t$ in~\eqref{3.17} and using~\eqref{3.4} yields
\begin{align}\label{3.19}
\limsup_{t\nearrow s}\frac{1}{t-s}\|\U_{\A}(t,s) \psi- \mS_t(t-s)\psi\|_{\E_0}=0
\end{align}
so that the differentiability of $\mS_t(t-s)\psi$ from the left at $s=t$ implies
\begin{align}\label{3.20}
\frac{\partial^-}{\partial s}\U_{\A}(t,s)\psi\big\vert_{s=t}=-\A(t)\psi\,.
\end{align}
If $s<t$, then \eqref{E2} and the strong continuity of $\U_{\A}(t,s)$ in $\E_0$ yield
\begin{align}
\frac{\partial^+}{\partial s}\U_{\A}(t,s)\psi&=\lim_{h\searrow 0}\frac{1}{h}\big(\U_{\A}(t,s+h)\psi-\U_{\A}(t,s)\psi\big)\nonumber\\
&=\lim_{h\searrow 0}\U_{\A}(t,s+h)\,\frac{1}{h}\big(\psi-\U_{\A}(s+h,s)\psi\big)=-\U_{\A}(t,s)\A(t)\psi\,.\label{3.21}
\end{align}
If $s\le t$, then \eqref{3.20} gives
\begin{align}
\frac{\partial^-}{\partial s}\U_{\A}(t,s)\psi&=\lim_{h\searrow 0}\frac{1}{h}\big(\U_{\A}(t,s)\psi-\U_{\A}(t,s-h)\psi\big)\nonumber\\
&=\lim_{h\searrow 0}\U_{\A}(t,s)\,\frac{1}{h}\big(\psi-\U_{\A}(s,s-h)\psi\big)=-\U_{\A}(t,s)\A(t)\psi\,.\label{3.22}
\end{align}
Hence, \eqref{3.21}-\eqref{3.22} entail \eqref{E3}.

\subsection*{Uniqueness} Let $(\V(t,s))_{0\le s\le t\le T}$ be an evolution system in $\E_0$ satisfying~\eqref{ES1}~-~\eqref{E3}. Since $\V(t,s)$ satisfies~\eqref{E3}, the construction of 
$\U_n(t,s)$ implies that the map $r\to \V(t,r)\U_{\A}(r,s)\psi$ is differentiable except for a finite number of values when $\psi\in\Y$ and
\begin{align*}
\V(t,s) \psi- \U_n(t,s)\psi&=\int_s^t \V(t,r)\,\big(\A(r)-\A_n(r)\big)\,\U_n(r,s)\psi\,\rd r\nonumber\\
&=\int_s^t \V(t,r)\,\big(A(r)-A_n(r)\big)\,\U_n(r,s)\psi\,\rd r 
\end{align*}
and hence
\begin{align*}
\|\V(t,s) \psi- \U_n(t,s)\psi\|_{\E_0}
&\le M_0\,M_1\, e^{\eta(t-s)}\,\|\psi\|_{\E_1} \int_s^t \|A(r)-A_n(r)\|_{\ml(\E_1,\E_0)} \,\rd r
\end{align*}
so that, passing to the limit as $n\to\infty$, gives $\V(t,s) \psi= \U_{\A}(t,s)\psi$ for every $\psi\in\Y$. Since $\Y$ is dense in $\E_0$, this is true for every $\psi\in\E_0$ and the uniqueness of the evolution system $(\U_{\A}(t,s))_{0\le s\le t\le T}$ follows. 

\subsection*{Strong Continuity in $\E_\alpha$}
Fix $\alpha\in [0,1)$ and recall that  \mbox{$\E_\alpha:=L_1(J,E_\alpha)$}
with  complex interpolation space $E_\alpha:=[E_0,E_1]_\alpha$. 
Let $0\le s\le t\le T$, $m,n\ge 1$, and consider $\psi\in \Y$. 
Since $[\E_0,\E_1]_\alpha =\E_\alpha$ according to \cite[Theorem~5.1.2]{BerghLoefstroem} we may interpolate the cases $\ell=0$ and $\ell=1$ in~\eqref{3.8} to deduce that there is $M_\alpha\ge 1$ (depending on $M_0$, $M_1$, and $\alpha$) with
\begin{align}\label{3.8x}
\|\U_n(t,s)\psi\|_{\E_\alpha}\le M_\alpha e^{\eta (t-s)}\,\|\psi\|_{\E_\alpha}\,.
\end{align}
Similarly, recalling from~\eqref{3.13} that
\begin{align}
\|\U_n(t,s)& \psi- \U_m (t,s)\psi\|_{\E_0}\le M_0\,M_1\, e^{\eta(t-s)}\,\|\psi\|_{\E_1} \int_s^t \|A_n(r)-A_m(r)\|_{\ml(\E_1,\E_0)} \,\rd r\label{3.13x}
\end{align}
and from \eqref{3.8} that
\begin{align}
\|\U_n(t,s)& \psi- \U_m (t,s)\psi\|_{\E_1}\le 2\,M_1\, e^{\eta(t-s)}\,\|\psi\|_{\E_1}\,, \label{3.13y}
\end{align}
 we may interpolate~\eqref{3.13x} and~\eqref{3.13y} to get
\begin{align}
\|\U_n(t,s)& \psi- \U_m (t,s)\psi\|_{\E_\alpha}\le M_4\, e^{\eta(t-s)}\,\|\psi\|_{\E_1} \left(\int_s^t \|A_n(r)-A_m(r)\|_{\ml(\E_1,\E_0)} \,\rd r\right)^{1-\alpha}\label{3.13z}
\end{align}
for some constant $M_4\ge 1$. It then follows from~\eqref{3.4} and~\eqref{3.13z} that $(\U_n(t,s) \psi)_{n\ge 1}$ converges in~$\E_\alpha$ while~\eqref{3.15} entails that this limit is necessarily $\U_\A(t,s) \psi$. Consequently, we deduce from~\eqref{3.8x} that
\begin{align}\label{3.8xy}
\|\U_\A(t,s)\psi\|_{\E_\alpha}=\lim_{n\to \infty}\|\U_n(t,s)\psi\|_{\E_\alpha}\le M_\alpha e^{\eta (t-s)}\,\|\psi\|_{\E_\alpha}\,.
\end{align}
Note that $\Y$ is dense in $\E_\alpha$ since $\Y$ is dense in $\E_1$ by Corollary~\ref{C11} and since $\E_1$ is dense in $\E_\alpha$. Thus, \eqref{3.8xy} is true for every $\psi\in \E_\alpha$ which proves~\eqref{E1x}. 

Finally, since
$[\E_0,\E_\beta]_{\alpha/\beta} =\E_\alpha$ for $0<\alpha<\beta$ according to \cite[Theorem~5.1.2]{BerghLoefstroem} and the reiteration theorem for the complex method \cite[Theorem~4.6.1]{BerghLoefstroem}, we have
$$
\|\U_\A(t,s)\psi-\U_\A(\tau,\sigma)\psi\|_{\E_\alpha}\le c\, \|\U_\A(t,s)\psi-\U_\A(\tau,\sigma)\psi\|_{\E_0}^{1-\alpha/\beta}\, \|\U_\A(t,s)\psi-\U_\A(\tau,\sigma)\psi\|_{\E_\beta}^{\alpha/\beta}
$$
for $\psi\in \E_1$ so that the strong continuity of $\U_{\A}(t,s)$ in $\E_\alpha$ is implied by~\eqref{E1x}, the strong continuity of $\U_{\A}(t,s)$ in $\E_0$ from~\eqref{ES2}, and the density of $\Y$ in $\E_\alpha$.

This completes the proof of Theorem~\ref{T2}.\qed
\vspace{2mm}

\section{Proof of Corollary~\ref{C00}} \label{Sec5}

Before proving Corollary~\ref{C00} we shall first establish the following embedding of $\Y$ (we do not assume $a_m<\infty$ in Lemma~\ref{L17}).

\begin{lem}\label{L17}
Consider $\sigma\ge 0$, $p\in (1,\infty)$, and $\theta\in [0,1)$ with $\sigma+\theta<1/p$. If $E_\theta=(E_0,E_1)_\theta$ is an arbitrary interpolation space, then 
\begin{align}\label{in7}
\Y \, \hookrightarrow\,  W_{p}^{\sigma}(J,E_{\theta})\,.
\end{align}
\end{lem}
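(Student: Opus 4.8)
The plan is to discard the boundary condition contained in the definition of $\Y$ and reduce the statement to a Sobolev-type embedding for the \emph{mixed} space $W_1^1(J,E_0)\cap L_1(J,E_1)$. Indeed, by the definition~\eqref{normY} of the norm on $\Y$ we have the continuous inclusions $\Y\hookrightarrow W_1^1(J,E_0)$ and $\Y\hookrightarrow L_1(J,E_1)=\E_1$, the latter being nothing but $W_1^0(J,E_1)$. The constraint $\psi(0)=\int_0^{a_m}b(a)\psi(a)\,\rd a$ is irrelevant for the embedding, so it suffices to prove
\[
W_1^1(J,E_0)\cap L_1(J,E_1)\hookrightarrow W_p^\sigma(J,E_\theta)\,.
\]

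First I would interpolate in the target space. Since $(\cdot,\cdot)_\theta$ is an admissible interpolation functor of exponent $\theta$, the fundamental interpolation inequality $\|x\|_{E_\theta}\le c\,\|x\|_{E_0}^{1-\theta}\|x\|_{E_1}^{\theta}$ holds for $x\in E_1$. Transferring this to the Sobolev--Slobodeckii scale yields the mixed-derivative embedding
\[
W_1^1(J,E_0)\cap L_1(J,E_1)\hookrightarrow W_1^{1-\theta}(J,E_\theta)\,,\qquad \theta\in[0,1)\,,
\]
where for $\theta=0$ the right-hand side is simply $W_1^1(J,E_0)$. Concretely, one bounds the order-$(1-\theta)$ Slobodeckii seminorm of $\psi$ in $E_\theta$ by applying the interpolation inequality to the differences $\psi(a)-\psi(a')\in E_1$, estimating the $E_0$-part via $\|\psi(a)-\psi(a')\|_{E_0}\le\big|\int_{a'}^a\|\partial_\xi\psi\|_{E_0}\,\rd\xi\big|$ and the $E_1$-part through $\psi\in L_1(J,E_1)$; equivalently one may invoke the vector-valued interpolation of such spaces (cf.~\cite{LQPP}).

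It then remains to gain integrability by a one-dimensional Sobolev embedding with $E_\theta$ as a \emph{fixed} target space. Since $J\subset\R$ is one-dimensional, the Slobodeckii embedding $W_q^s(J,E_\theta)\hookrightarrow W_p^\sigma(J,E_\theta)$ holds for $q\le p$ whenever $s-\tfrac1q\ge\sigma-\tfrac1p$. Applying this with $q=1$ and $s=1-\theta$, the condition $(1-\theta)-1\ge\sigma-\tfrac1p$ is exactly $\sigma+\theta\le\tfrac1p$, and the strict hypothesis $\sigma+\theta<\tfrac1p$ places us safely inside the admissible range; note also that $\sigma+\theta<1$ forces $\sigma<1-\theta=s$, so that one genuinely trades smoothness for integrability. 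Chaining this with the mixed-derivative embedding and with $\Y\hookrightarrow W_1^1(J,E_0)\cap L_1(J,E_1)$ proves~\eqref{in7}.

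The hard part will be making both embeddings rigorous in the present generality: the interval $J$ may be unbounded (the case $a_m=\infty$), the functor $(\cdot,\cdot)_\theta$ is arbitrary rather than the real or complex one, and the source scale sits at the non-reflexive endpoint $p=1$. The gain-of-integrability step is the delicate one, as $\sigma+\theta<1/p$ is precisely the Sobolev borderline exponent; on the half-line it is the \emph{global} integrability built into $\Y$ (namely $\psi\in L_1(J,E_1)$ together with $\partial_a\psi\in L_1(J,E_0)$, which force $\psi\in L_\infty(J,E_0)$ and, after a splitting of $J$ into $\{\|\psi\|_{E_1}\le 1\}$ and its complement of finite measure, $\psi\in L_p(J,E_\theta)$) that guarantees the embedding holds up to infinity rather than only locally.
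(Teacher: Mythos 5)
Your overall strategy parallels the paper's: discard the nonlocal boundary condition, trade the pair $\partial_a\psi\in\E_0$, $\psi\in\E_1$ for fractional age-smoothness with values in $E_\theta$, then gain integrability by a one-dimensional Sobolev/Besov embedding with fixed target. The genuine gap is in your Step 2, the claimed embedding $W_1^1(J,E_0)\cap L_1(J,E_1)\hookrightarrow W_1^{1-\theta}(J,E_\theta)$ at the \emph{exact} exponent $1-\theta$. The concrete argument you sketch does not close. Writing the order-$(1-\theta)$ Slobodeckii seminorm in terms of increments $h$, applying $\|x\|_{E_\theta}\le c\,\|x\|_{E_0}^{1-\theta}\|x\|_{E_1}^{\theta}$ to $x=\psi(\cdot+h)-\psi(\cdot)$, and then H\"older in the age variable gives, with $J_h:=\{a\in J\,;\,a+h\in J\}$,
\begin{equation*}
\int_{J_h}\frac{\|\psi(a+h)-\psi(a)\|_{E_\theta}}{h^{2-\theta}}\,\rd a
\;\le\; \frac{c}{h^{2-\theta}}\,\big(h\,\|\partial_a\psi\|_{\E_0}\big)^{1-\theta}\big(2\,\|\psi\|_{\E_1}\big)^{\theta}
\;=\;\frac{C}{h}\,,
\end{equation*}
and $\int_0 \rd h/h$ diverges logarithmically at the diagonal. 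This is not an artifact of clumsiness: $W_1^1$ embeds into $B_{1,\infty}^1$ but \emph{not} into $B_{1,1}^1$, translations in $L_1(J,E_1)$ converge without any rate, and $W_1^{1-\theta}=B_{1,1}^{1-\theta}$ is a third-index-one space, so no pointwise-interpolation/H\"older argument can reach this endpoint. Your fallback, ``invoke the vector-valued interpolation of such spaces (cf.~[LQPP])'', is not a proof either; the results actually needed (real interpolation of vector-valued Besov spaces with \emph{different} smoothness orders \emph{and} different target spaces) are those of Amann's Glasnik paper cited in the text, not of [LQPP].

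The repair is exactly what the paper does, and it shows that the strict inequality $\sigma+\theta<1/p$ must be spent in Step 2, not in Step 3. Set $\varepsilon:=1/p-\sigma-\theta>0$ and give up this much smoothness \emph{before} interpolating: $W_1^1(J,E_0)\hookrightarrow B_{1,\infty}^1(J,E_0)\hookrightarrow B_{1,1}^{1-\varepsilon}(J,E_0)$ and $L_1(J,E_1)\hookrightarrow B_{1,\infty}^0(J,E_1)\hookrightarrow B_{1,1}^{-\varepsilon}(J,E_1)$; real interpolation with parameters $(1-\theta,1)$ then lands in $B_{1,1}^{1-\theta-\varepsilon}\big(J,(E_0,E_1)_{\theta,1}\big)$, whose target embeds into an arbitrary admissible $E_\theta$ by the extremal property of $(\cdot,\cdot)_{\theta,1}$, and finally $B_{1,1}^{1-\theta-\varepsilon}\hookrightarrow B_{p,1}^{\sigma}\hookrightarrow B_{p,p}^{\sigma}\doteq W_p^\sigma$, where the Sobolev-type step is used at the \emph{borderline} $s-1=\sigma-1/p$ (harmless with matching third index). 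In your accounting the slack sits in Step 3, where it is not needed, while Step 2 claims a sharp endpoint result, where the slack is indispensable. The good news is that your own computation survives once you relocate the $\varepsilon$: aiming for $W_1^{1-\theta-\varepsilon}(J,E_\theta)$, the small-$h$ integrand above becomes $h^{-1+\varepsilon}$, which is integrable near $0$, the large-$h$ part is controlled also for $a_m=\infty$ because $2-\theta-\varepsilon>1$, and the $L_1(J,E_\theta)$ part follows from H\"older; your Step 3 then still applies at the resulting borderline. With that modification your proof becomes a legitimate, more elementary alternative to the paper's interpolation-theoretic argument.
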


\begin{proof}
Set $\ve:=1/p-\sigma-\theta>0$. We then infer from \cite[Equation~(3.6), Equation~(3.1)]{AmannGlasnik} (see also \cite[Corollary~4.3]{AmannGlasnik}) the Besov space embeddings
\begin{equation}\label{in1}
W_1^1(J,E_0)\hookrightarrow B_{1,\infty}^1(J,E_0)\hookrightarrow B_{1,1}^{1-\ve}(J,E_0)
\end{equation}
and
\begin{equation}\label{in2}
L_1(J,E_1)\hookrightarrow B_{1,\infty}^0(J,E_1)\hookrightarrow B_{1,1}^{-\ve}(J,E_1)\,.
\end{equation}
The definition of $\Y$ and real interpolation of \eqref{in1} and \eqref{in2}  yields \cite[Theorem~3.1, Corollary~4.3]{AmannGlasnik}
\begin{align}\label{in3}
\Y \, \hookrightarrow\, \big(L_1(J,E_1),W_1^1(J,E_0)\big)_{1-\theta,1}\,\hookrightarrow\,
&\, \big(B_{1,1}^{-\ve}(J,E_1),B_{1,1}^{1-\ve}(J,E_0)\big)_{1-\theta,1} \nonumber\\
\doteq\, & \, B_{1,1}^{1-\theta-\ve}\big(J,(E_1,E_0)_{1-\theta,1}\big) \,.
\end{align}
Since $\sigma=1/p-\theta-\ve<1-\theta-\ve$ we  deduce from \cite[Equation~(3.3)]{AmannGlasnik} and \cite[Corollary~4.3]{AmannGlasnik} that
\begin{align}\label{in4}
 B_{1,1}^{1-\theta-\ve}\big(J,(E_1,E_0)_{1-\theta,1}\big) \hookrightarrow  B_{p,1}^{\sigma}\big(J,(E_1,E_0)_{1-\theta,1}\big)\,.
\end{align}
Observe then from \cite[Equation~(3.2), Equation~(3.5)]{AmannGlasnik} and \cite[Corollary~4.3]{AmannGlasnik} that
\begin{align}\label{in5}
B_{p,1}^{\sigma}\big(J,(E_1,E_0)_{1-\theta,1}\big) \hookrightarrow  B_{p,p}^{\sigma}\big(J,(E_1,E_0)_{1-\theta,1}\big)\doteq W_{p}^{\sigma}\big(J,(E_1,E_0)_{1-\theta,1}\big)\,.
\end{align}
Finally, we gather \eqref{in3}-\eqref{in5} and use that the extremal property of the real interpolation method (see \cite[Theorem~3.9.1]{BerghLoefstroem}) implies the embedding
\begin{align*} 
(E_1,E_0)_{1-\theta,1}=(E_0,E_1)_{\theta,1}\hookrightarrow (E_0,E_1)_{\theta}=E_\theta
\end{align*}
 to deduce~\eqref{in7}.
\end{proof}

\subsection*{Proof of Corollary~\ref{C00}}

 Let now the assumptions of Corollary~\ref{C00} be true. Then, since $a_m<\infty$ and $p\in (1,\infty)$, we have 
$$
L_p(J,E_\ell)\hookrightarrow L_1(J,E_\ell)=\E_\ell\,,\quad \ell=0,1\,,
$$ 
so that  Corollary~\ref{CP2}  entails that there are $N\ge 1$ and $\xi\in\R$ with
\begin{equation*} 
\bigg\|\prod_{j=1}^{n}\mS_{t_j}(s_j)\bigg\|_{\ml(L_p(J,E_\ell))}\le N \exp\bigg\{\xi \sum_{j=1}^{n}s_j\bigg\}\,,\quad s_j\ge 0\,,
\end{equation*}
and therefore, interpolating the cases $\ell=0$ and $\ell=1$ and recalling that either $(\cdot,\cdot)_\theta=[\cdot,\cdot]_\theta$  or $(\cdot,\cdot)_\theta=(\cdot,\cdot)_{\theta,p}$ with $\theta\in [0,1]$, we deduce
\begin{equation}\label{app}
\bigg\|\prod_{j=1}^{n}\mS_{t_j}(s_j)\bigg\|_{\ml(L_p(J,E_\theta))}\le N \exp\bigg\{\xi \sum_{j=1}^{n}s_j\bigg\}\,,\quad s_j\ge 0\,,
\end{equation}
for  any finite sequence $0\le t_1\le t_2\le\ldots\le t_n\le T$ and $n=1,2,\ldots $. Let $\psi\in L_p(J,E_\theta)$.
We then infer from~\eqref{app} that the approximation $\U_n(t,s)$ in~\eqref{3.5} satisfies
\begin{equation} \label{compii}
\|\U_n(t,s)\psi\|_{L_p(J,E_\theta)}\le N e^{\xi(t-s)}\,\|\psi\|_{L_p(J,E_\theta)}\,,\quad n\ge 1\,,\quad 0\le s\le t\le T\,.
\end{equation}
Thus, the sequence $(\U_n(t,s)\psi)_{n\ge 1}$ is bounded in~$L_p(J,E_\theta)$ while it converges to $\U_{\A}(t,s)\psi$ in~$\E_0$ according to~\eqref{3.15}.
Since $L_p(J,E_\theta)$ is reflexive due to $p\in (1,\infty)$ and our assumption that  $E_\theta$ is reflexive, there is a subsequence $(\U_{n_k}(t,s)\psi)_{k\ge 1}$ that converges weakly in $L_p(J,E_{\theta})$ to $\U_{\A}(t,s)\psi$ by the Theorem of Eberlein-Smulyan, and hence, invoking~\eqref{compii},
$$
\|\U_{\A}(t,s)\psi\|_{L_p(J,E_{\theta})}\le\liminf_{k\to\infty}\|\U_{n_k}(t,s)\psi\|_{L_p(J,E_{\theta})}\le N e^{\xi(t-s)}\,\|\psi\|_{L_p(J,E_{\theta})}
$$
for $\psi\in L_p(J,E_\xi)$. This proves~\eqref{emb1r}. As for~\eqref{emb2r} we note that, given $\theta\in [0,1)$ there is $p\in (1,\infty)$ with $\theta<1/p$. Then 
$$
\Y\hookrightarrow L_p(J,E_{\theta})\hookrightarrow L_1(J,E_{\theta})=\E_\theta
$$
 due to Lemma~\ref{L17} and  $a_m<\infty$. Hence~\eqref{emb2r} is implied by~\eqref{emb1r}. This proves Corollary~\ref{C00}.\qed

\section{Proof of Theorem~\ref{T3} and Corollary~\ref{CT3}}\label{Sec6}

The quasilinear situation requires the following result on the continuous dependence of the evolution system on the operator $\A$ which is an easy consequence of its construction:

\begin{prop}\label{P1.6}
Suppose the two operators $A_1$ and $A_2$ satisfy \eqref{A1}, \eqref{A11}, \eqref{A2}, \eqref{A3} and denote the corresponding operators from Proposition~\ref{T1} by $\A_1$ and $\A_2$, respectively. Then, there are constants $R\ge 1$ and $\eta\in\R$ such that
 \begin{align*}
\|\U_{\A_1}(t,s)& \psi- \U_{\A_2}(t,s)\psi\|_{\E_0}
\le R\, e^{\eta(t-s)}\,\|\psi\|_{\E_1} \int_s^t \|A_1(\tau,\cdot)-A_2(\tau,\cdot)\|_{L_\infty(J,\ml(E_1,E_0))} \,\rd \tau
\end{align*}
for $0\le s\le t\le T$ and every $\psi\in\E_1$.
\end{prop}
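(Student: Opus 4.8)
The plan is to carry the comparison out at the level of the step approximations from the construction in Section~\ref{Sec4} and then pass to the limit, since the crucial observation~\eqref{key} is exactly what keeps the relevant integrand meaningful on $\E_1$. For $i=1,2$ denote by $(\U_n^{(i)}(t,s))_{0\le s\le t\le T}$ the family defined as in~\eqref{3.5} for the operator $A_i$, and let $\A_{i,n}$, $A_{i,n}$ be the associated piecewise-constant approximations defined as in~\eqref{3.3}. Replacing the constants in~\eqref{A11} by common bounds $M_0,M_1\ge 1$ and $\omega_0,\omega_1\in\R$ valid for both $A_1$ and $A_2$, both families satisfy~\eqref{3.8}, leave $\Y$ invariant by~\eqref{3.9}, and obey the pointwise differentiation identities~\eqref{3.10}-\eqref{3.11}.

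First I would fix $\psi\in\Y$. By~\eqref{3.9} we have $\U_n^{(2)}(r,s)\psi\in\Y$ for $r\in[s,t]$, so the map $r\mapsto\U_n^{(1)}(t,r)\U_n^{(2)}(r,s)\psi$ is differentiable on $[s,t]$ off the finitely many grid points, and~\eqref{3.10}-\eqref{3.11} together with~\eqref{key} give
\begin{align*}
\frac{\rd}{\rd r}\,\U_n^{(1)}(t,r)\,\U_n^{(2)}(r,s)\psi
&=\U_n^{(1)}(t,r)\big(\A_{2,n}(r)-\A_{1,n}(r)\big)\U_n^{(2)}(r,s)\psi\\
&=\U_n^{(1)}(t,r)\big(A_{2,n}(r,\cdot)-A_{1,n}(r,\cdot)\big)\U_n^{(2)}(r,s)\psi\,.
\end{align*}
Integrating over $[s,t]$ and using $\U_n^{(i)}(s,s)=I$ yields the representation
$$
\U_n^{(2)}(t,s)\psi-\U_n^{(1)}(t,s)\psi=\int_s^t \U_n^{(1)}(t,r)\big(A_{2,n}(r,\cdot)-A_{1,n}(r,\cdot)\big)\U_n^{(2)}(r,s)\psi\,\rd r\,.
$$

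Next I would estimate the integrand exactly as in~\eqref{3.13}, bounding $\U_n^{(1)}(t,r)$ in $\ml(\E_0)$ and $\U_n^{(2)}(r,s)$ in $\ml(\E_1)$ by~\eqref{3.8} and passing from $\ml(\E_1,\E_0)$ to $L_\infty(J,\ml(E_1,E_0))$ via~\eqref{normA}. With $R:=M_0 M_1$ and $\eta:=\max\{\omega_0+M_0\|b\|_0,\,\omega_1+M_1\|b\|_1\}$ this gives
$$
\|\U_n^{(2)}(t,s)\psi-\U_n^{(1)}(t,s)\psi\|_{\E_0}\le R\,e^{\eta(t-s)}\,\|\psi\|_{\E_1}\int_s^t \|A_{2,n}(r,\cdot)-A_{1,n}(r,\cdot)\|_{L_\infty(J,\ml(E_1,E_0))}\,\rd r\,.
$$
Letting $n\to\infty$, the left-hand side tends to $\|\U_{\A_2}(t,s)\psi-\U_{\A_1}(t,s)\psi\|_{\E_0}$ by~\eqref{3.15}, whereas the step function $r\mapsto\|A_{2,n}(r,\cdot)-A_{1,n}(r,\cdot)\|_{L_\infty(J,\ml(E_1,E_0))}$ converges uniformly on $[s,t]$ to $r\mapsto\|A_2(r,\cdot)-A_1(r,\cdot)\|_{L_\infty(J,\ml(E_1,E_0))}$ by the continuity~\eqref{A1a}. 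Hence the asserted inequality holds for all $\psi\in\Y$. Finally, since $\Y$ is dense in $\E_1$ by Corollary~\ref{C11} and both sides are continuous in $\psi$ with respect to $\|\cdot\|_{\E_1}$ (the left-hand side because $\U_{\A_i}(t,s)\in\ml(\E_0)$ and $\E_1\hookrightarrow\E_0$, the right-hand side trivially), the estimate extends to every $\psi\in\E_1$.

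The main obstacle is conceptual rather than computational: one cannot differentiate $r\mapsto\U_{\A_1}(t,r)\U_{\A_2}(r,s)\psi$ directly, because the limiting evolution systems $\U_{\A_i}$ are not known to leave $\Y$ invariant (as noted after Theorem~\ref{T2}), so $\U_{\A_2}(r,s)\psi$ need not lie in $\dom(\A_2(r))$ and the generators cannot be applied to it. Working with the approximations $\U_n^{(i)}$, which do preserve $\Y$ and are genuinely differentiable in the sense of~\eqref{3.10}-\eqref{3.11}, circumvents this difficulty; the observation~\eqref{key} then ensures that the operator difference surviving in the limit is a bounded map $\E_1\to\E_0$, so that the final integral is well-defined and the passage to the limit is legitimate.
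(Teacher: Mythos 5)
Your proposal is correct and follows essentially the same route as the paper's own proof: differentiating $r\mapsto\U_n^{(1)}(t,r)\U_n^{(2)}(r,s)\psi$ for $\psi\in\Y$ at the level of the step approximations from~\eqref{3.5}, estimating via the stability bounds~\eqref{3.8} and~\eqref{key}, passing to the limit with~\eqref{3.15} and~\eqref{3.4}, and concluding by density of $\Y$ in $\E_1$ (Corollary~\ref{C11}). Your closing remark about why one must work with the approximations rather than the limiting evolution systems is exactly the point the paper's construction exploits.
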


\begin{proof}
Letting $\U_n^k(t,s)$ be the approximation of $\U_{\A_k}(t,s)$ from~\eqref{3.5} for $n\ge 1$ and $k=1,2$, we proceed as in the proof of Theorem~\ref{T2} by  noticing  that, given $\psi\in\Y$, the map $\tau\mapsto \U_n^1(t,\tau)\U_n^2(\tau,s)\psi$ is differentiable with respect to $\tau\in [s,t]$ except for a finite number of values and obtain
\begin{align*}
\U_n^1(t,s) \psi- \U_n^2(t,s)\psi
=\int_s^t \U_n^1(t,\tau)\,\big(A_{1,n}(\tau)-A_{2,n}(\tau)\big)\,\U_n^2(\tau,s)\psi\,\rd \tau \,. 
\end{align*}
This yields
\begin{align*}
\|\U_n^1(t,s)\psi- \U_n^2 (t,s)\psi\|_{\E_0}
\le M_0\,M_1\, e^{\eta(t-s)}\,\|\psi\|_{\E_1} \int_s^t \|A_{1,n}(\tau)-A_{2,n}(\tau)\|_{\ml(\E_1,\E_0)} \,\rd \tau\,.
\end{align*}
Since $\U_n^k (t,s)\psi$ converges to $\U_{\A_k} (t,s)\psi$ in $\E_0$ as $n\to\infty$ according to~\eqref{3.15} and since $A_{k,n}$ converges to $A_k$ in $C([0,T],\ml(\E_1,\E_0))$ as $n\to\infty$ by~\eqref{3.4}, the assertion follows from~\eqref{normA} for every $\psi\in\Y$. Since $\Y$ is dense in $\E_1$ by Corollary~\ref{C11}, the assertion is also true for every~$\psi\in\E_1$. 
\end{proof}

\subsection*{Proof of Theorem~\ref{T3}}

Suppose the assumptions of Theorem~\ref{T3}. It then readily follows for a continuous function $u: [0,T_0]\to \mathcal{B}$ with $T_0\in (0,T]$ that $t\mapsto \A\big(u(t),t\big)$ generates a unique evolution system $(\U_{\A(u)}(t,s)_{0\le s\le t\le T_0})$ in the sense of Theorem~\ref{T2}. Moreover, invoking Proposition~\ref{P1.6} and~\eqref{lipp} we obtain
 \begin{align*}
\|\U_{\A(u)}(t,s) \phi- \U_{\A(v)}(t,s)\phi\|_{\E_0}
&\le R\, e^{\eta(t-s)}\,\|\phi\|_{\E_1} \int_s^t \|A(u(\tau),\tau,\cdot)-A(v(\tau),\tau,\cdot)\|_{L_\infty(J,\ml(\E_1,\E_0))} \,\rd \tau\\
&\le L\,R\, e^{\eta(t-s)}\,\|\phi\|_{\E_1} \int_s^t \|u(\tau)-v(\tau)\|_{\E_0} \,\rd \tau
\end{align*}
for $0\le s\le t\le T$ and two continuous functions $u,v: [0,T_0]\to \mathcal{B}$. 
Theorem~\ref{T3} now follows by Banach's fixed point theorem exactly along the lines of the proof of \cite[Theorem~6.4.5]{Pazy} by showing that the mapping
\begin{equation}\label{fp}
u\mapsto \U_{\A(u)}(\cdot,0) \phi
\end{equation} 
has a unique fixed $u$ point in $\mathcal{S}:=\bar\B_{C([0,T_\phi],\E_0)}(\phi,r_0)$ provided $T_\phi\in (0,T]$ is small enough. Moreover, since $\phi\in\E_1\subset\E_\alpha$ for every $\alpha\in [0,1)$ we infer  $u\in C([0,T_\phi],\E_\alpha)$  from~\eqref{ES2alpha}.
This proves Theorem~\ref{T3}.
\qed

\subsection*{Proof of Corollary~\ref{CT3}}

Suppose the assumptions of Corollary~\ref{CT3}. Then $\phi\in\E_1\cap L_p(J,E_\theta)$ and we replace in the previous proof of  Theorem~\ref{T3} the set $\mathcal{S}$ by
$$
\mathcal{S}_0:=\big\{u\in \mathcal{S}\,;\,  u(t)\in L_p(J,E_\theta)\,,\, \|u(t)\|_{L_p(J,E_\theta)}\le (N_0+r_0)\|\phi\|_{L_p(J,E_\theta)}, \text{for $t\in [0,T_\phi]$}\big\}\,.
$$ 
Then $\mathcal{S}_0$ is closed in $C([0,T_\phi],\E_0)$ since $L_p(J,E_\theta)$ is reflexive under the assumptions of Corollary~\ref{CT3}. Moreover, using Corollary~\ref{C00},
$$
\|\U_{\A(u)}(t,0) \phi\|_{L_p(J,E_\theta)}\le N_0\, e^{\xi_0 T_\phi}\,\|\phi\|_{L_p(J,E_\theta)}\le(N_0+r_0)\,\|\phi\|_{L_p(J,E_\theta)}\,,\quad 0\le t\le T_\phi\,,
$$
for $u\in\mathcal{S}_0$ provided $T_\phi\in (0,T]$ is small enough. Now, as in the proof of  Theorem~\ref{T3}, the mapping~\eqref{fp} has a unique fixed point in $\mathcal{S}_0$.\qed

\bibliographystyle{siam}
\bibliography{AgeDiff_220203}

\end{document}